\DeclareMathOperator{\LC}{LC}
\DeclareMathOperator{\SC}{TC}
\DeclareMathOperator{\im}{im}
\DeclareMathOperator{\N}{N}
\DeclareMathOperator{\inte}{int}
\DeclareMathOperator{\relint}{relint}
\DeclareMathOperator{\Conv}{Conv}
\DeclareMathOperator{\Cone}{Cone}
\DeclareMathOperator{\signvar}{\# signvar}
\newcommand{\R}{\mathbb{R}}
\newtheorem{thm}{Theorem}[section] 
\newtheorem{cor}[thm]{Corollary} 
\newtheorem{prop}[thm]{Proposition} 
\newtheorem{lemma}[thm]{Lemma}
\newtheorem{problem}[thm]{Problem}
\theoremstyle{definition}
\newtheorem{ex}[thm]{Example}
\newtheorem{remark}[thm]{Remark}
\begin{document}

\title[The signed support and Descartes' rule of signs]{Geometry of the signed support of a multivariate polynomial and Descartes' rule of signs}

\author{Máté L. Telek}
\address{Department of Mathematical Sciences, University of Copenhagen,
Universitetsparken 5,
2100 Copenhagen, Denmark}
\email{mlt@math.ku.dk}

\begin{abstract}

We investigate the signed support, that is, the set of the exponent vectors and the signs of the coefficients, of a multivariate polynomial $f$. We describe conditions on the signed support ensuring that the semi-algebraic set, denoted as $\{ f < 0 \}$, containing points in the positive real orthant where $f$ takes negative values, has at most one connected component. These results generalize Descartes' rule of signs in the sense that they provide a bound which is independent of the values of the coefficients and the degree of the polynomial. Based on how the exponent vectors lie on the faces of the Newton polytope, we give a recursive algorithm that verifies a sufficient condition for the set $\{ f < 0 \}$ to have one connected component.
We apply the algorithm to reaction networks in order to prove that the parameter region of multistationarity of a ubiquitous network comprising phosphorylation cycles is connected.

\medskip

\emph{Keywords: } semi-algebraic set, connected component, Newton polytope, reaction network
\end{abstract}

\maketitle


\section{Introduction}

Descartes' rule of signs is a classical theorem in real algebraic geometry that provides an upper bound on the number of positive real roots of a univariate real polynomial. The bound is given by the number of sign changes in the coefficient sequence of the polynomial, therefore it is easy to compute. Since Descartes' bound is independent from the degree of the polynomial, it shows a crucial difference between real and complex roots.

Since Descartes published his result in 1637, a lot of effort has been made to improve and generalize his statement. Gauss showed that the number of positive roots has the same parity as the number of sign changes in the coefficient sequence \cite{Gauss1828}. It is known that the result is valid for polynomials with real exponents \cite{Curtiss1918}. Moreover, Descartes's bound is sharp, that is, for every given sign sequence there exists a polynomial matching the sign sequence that has as many positive roots  as provided by Descartes's bound \cite{Grabiner_DescartesIsSharp}.

The research question of multivariate generalizations is still rather open. In his seminal book \emph{Fewnomials} \cite{khovanskii1991book}, Khovanskii gave an upper bound on the number of positive solutions of a polynomial system given by $n$ real polynomials in $n$ variables that depends only on $n$ and the number of monomials appearing in the polynomials. Khovanskii's bound has been improved in~\cite{bihansottile2007,bihansottile2011}. For some specific systems there are also better bounds available \cite{bihan2007,lirojaswang2003,bihanelhilany2017,koiranportiertavenas2013,avendano2009}. These works generalize Descartes' rule of signs in the sense that they provide upper bounds which are independent of the degree; however, the signs of the coefficients are not taken into account. Recently, in \cite{Bihan_2016,bihan2020optimal}, for systems whose polynomials are supported on a circuit, a sharp upper bound was given that depends on the number of sign changes of a given sequence associated both with the exponents and the coefficients of the polynomials.

Descartes' rule of signs allows also different types of generalizations. In a notorious one, instead of focusing on a system of polynomials one considers a single polynomial in $n$ variables and bounds topological invariants of the hypersurface given by the positive real zero set of the polynomial. In \cite{bihansottile_betti}, the authors provided upper bounds on the sum of the Betti numbers of the hypersurface. Bounds on the connected components of the hypersurface were given in \cite{NewSubexBounds,bihanhumberttavenas2022}. These bounds depend on the number of variables $n$ and the number of monomials of the polynomial.

In this work, we aim to bound connected components but in a slightly different setting. As in \cite{DescartesHypPlane}, we consider connected components of the complement of the hypersurface. To make it precise, let $f\colon \mathbb{R}^{n}_{>0} \to \mathbb{R}$ be a signomial (a generalized polynomial whose exponent vectors are real) and let $f^{-1}(\mathbb{R}_{<0})$ be the set of points in $\mathbb{R}^{n}_{>0}$ where $f$ takes negative values. In~\cite{DescartesHypPlane}, the authors phrased the following problem as \emph{generalization of Descartes' rule of signs to hypersurfaces}.
\begin{problem}
\label{Problem}
Consider a signomial $f\colon \mathbb{R}_{>0}^n \to \mathbb{R}$ with $f(x) = \sum_{\mu \in \sigma(f)}c_{\mu}x^{\mu}$, and $\sigma(f) \subseteq \mathbb{R}^{n}$ a finite set. Find a (sharp) upper bound on the number of connected components of $f^{-1}(\mathbb{R}_{<0})$ based on the signs of the coefficients and the geometry of $\sigma(f)$.
\end{problem}
To avoid wordy sentences, we might call connected components of $f^{-1}(\mathbb{R}_{<0})$ \emph{negative connected components of} $f$. In Section  \ref{Sec_DescBounds}, we give new conditions on the set of exponent vectors $\sigma(f)$ that ensure that $f$ has at most one negative connected component. For instance, the existence of two parallel hyperplanes, which enclose in a certain way the exponent vectors of $f$ with positive coefficients, implies that $f$ has at most one negative connected component  (Theorem \ref{Thm_Box}). In case $f$ is multivariate ($n \geq 2$), we show that the number of negative connected components is one if $f$ has only one positive coefficient (Corollary \ref{Cor_OnePos}), or the exponent vectors are separated by a simplex in a specific manner (Corollary \ref{Lemma_SimplexVertCones}).

In Section \ref{Sec_RedFaces}, we show that the problem of finding the number of negative connected components can be reduced to the same problem for a signomial in fewer monomials if all the exponent vectors of $f$  with negative coefficients are contained in a face of the Newton polytope (Theorem~\ref{Thm_NegFace}). A similar reduction is possible if the Newton polytope of $f$ has two parallel faces containing all the exponent vectors of $f$ (Theorem \ref{Thm_ParallelFacesEdge}). These statements lead to a recursive algorithm that can verify connectivity of $f^{-1} (\mathbb{R}_{< 0})$ (Algorithm \ref{Algo_Conn}). Since the algorithm is based on polyhedral geometry computations, its running time remains reasonable even for polynomials with many variables and many monomials.

\medskip

Connectivity questions concerning semi-algebraic sets naturally appear in the application of algebraic geometry to robotics or to reaction networks \cite{RoboticsCon,ParamGeo}. Our motivation to consider Problem \ref{Problem} came from reaction network theory, where several recent works have focused on the topological properties of subsets of the parameters that give rise to dynamical systems with specific properties. For example, the set of parameters giving rise to toric dynamical systems, known as the toric locus, was shown to be connected in \cite{ConnToric}, and the connectivity of the disguised toric locus has been studied in \cite{Conn_DisgToric}. Questions regarding the connectivity of the parameter region for multistationarity have been investigated in \cite{ParamGeo,MultDualPhos,Multnsite,EF_connpaper}. In~\cite{EF_connpaper}, the authors associated with a reaction network (satisfying some technical conditions) a polynomial function $q\colon \mathbb{R}_{>0}^{n} \to \mathbb{R}$ such that connectivity of $q^{-1}(\mathbb{R}_{<0})$ implies that the parameter region of multistationarity of the reaction network is connected. Using the results from \cite{DescartesHypPlane}, for several reaction networks it was verified that the associated polynomial $q$ has one negative connected component, so the parameter region of multistationarity is connected. 

However, there were some biologically relevant reaction networks where the results from \cite{DescartesHypPlane} did not suffice. In particular, one of these networks was the weakly irreversible phosphorylation system with two binding sites \cite{ParamGeo}. Using numerical methods, the authors in \cite{ParamGeo} showed that its parameter region of multistationarity is connected; however, they did not give a rigorous proof. 

Another important family of reaction networks, where the connectivity of the parameter region of multistationarity has been investigated, is the sequential and distributive phosphorylation cycles  with $m$-binding sites, $m \in \mathbb{N}$. In \cite{EF_connpaper}, it has been showed that the parameter region of multistationarity is connected for $m=2,3$. Furthermore, it is known that the projection of the parameter region of multistationarity to a subset of the parameters (reaction rate constants) is connected for all $m$ \cite{MultDualPhos,Multnsite}. In Section \ref{Sec_App}, we revisit these networks. We use Algorithm \ref{Algo_Conn} to show connectivity of the parameter region of multistationarity for the weakly irreversible phosphorylation system and for phosphorylation cycles  with $m=4,5,6,7$ binding sites. This computational evidence encouraged us to investigate the problem for all $m \in \mathbb{N}$. Since the first submission of the current paper, based on Theorem~\ref{Thm_NegFace} and Theorem~\ref{Thm_ParallelFacesEdge}, in a joint work with Nidhi Kaihnsa we have proved that the parameter region of multistationarity for the sequential and distributive phosphorylation cycles with $m$-binding sites is connected for all $m \in \mathbb{N}$ \cite{NsiteConnecGeneral}.

\subsection*{Notation} $\mathbb{R}_{\geq0}$, $\mathbb{R}_{>0}$ and $\R_{<0}$ refer to the sets of non-negative, positive and negative real numbers respectively. For monomials, we use the notation $x^\mu = x_1^{\mu_1} \cdots  x_n^{\mu_n}$, where $x \in \mathbb{R}_{>0}^n, \mu \in \mathbb{R}^{n}$. For two vectors $v,w \in \mathbb{R}^{n}$, $v \cdot w$ denotes the Euclidean scalar product, and $v \ast w$ denotes the coordinate-wise product of $v$ and $w$. We denote the Euclidean interior of a set $X \subseteq \mathbb{R}^{n}$ by $\inte(X)$. If $X \subseteq \mathbb{R}^{n}$ is a polyhedron, $\relint(X)$ denotes the relative interior of $X$, that is the Euclidean interior of $X$ in its affine hull. The symbol $\#S$ denotes the cardinality of the finite set $S$. We write $ S \sqcup T$ for the disjoint union of two sets $S,T$. 

\section{Separating and enclosing hyperplanes}
\label{Sec_DescBounds}

\subsection{Background and definitions}
\label{Sec::Background}
In this section, we investigate \emph{signomials} and their \emph{negative connected components} using certain affine hyperplanes that partition the exponent vectors of the signomial. Recall that a signomial is a multivariate generalized polynomial with real exponents whose domain is restricted to the positive orthant $\mathbb{R}_{>0}^{n}$ \cite{duffin1973geometric, FuzzyGeo}. In other words, a signomial is a function of the form:
\[ f\colon \, \mathbb{R}^{n}_{>0} \to \mathbb{R}, \quad x \mapsto \sum_{\mu \in \sigma(f)} c_{\mu}x^{\mu},\]
where $\sigma(f) \subseteq \mathbb{R}^n$ is a finite set, called the \emph{support} of $f$ and the \emph{coefficients} $c_\mu$ are non-zero real numbers. The negative connected components of $f$ are connected components of the set:
\[ f^{-1}(\mathbb{R}_{<0}) = \{ x \in \mathbb{R}^{n}_{>0} \mid f(x) < 0 \}.\]
We write $\mathcal{B}^{-}_{0}(f)$ for the set of negative connected components and $b_0( f^{-1}(\mathbb{R}_{<0}))$  for the cardinality of $\mathcal{B}^{-}_{0}(f)$. 
  
 If $c_\mu > 0$ (resp. $c_\mu <0$), we call $\mu$ a \emph{positive} (resp. \emph{negative}) \emph{exponent vector} of $f$. We write
\[ \sigma_+(f) := \{ \mu \in \sigma(f) \mid c_\mu >0\} \quad \text{and} \quad  \sigma_-(f) := \{ \mu \in \sigma(f) \mid c_\mu <0\}\]
for the set of positive and negative exponent vectors respectively. The convex hull of the support
\[ \N(f) := \Conv( \sigma(f) )\]
is called the \emph{Newton polytope} of $f$. For a set $S \subseteq \mathbb{R}^{n}$, we define the \emph{restriction} of $f$ to $S$ as
\[  f_{|S}\colon \mathbb{R}^{n}_{>0} \to \mathbb{R}, \quad x \mapsto  f_{|S}(x) := \sum_{\mu \in \sigma(f) \cap S} c_{\mu}x^{\mu}.\]

Similarly to \cite{DescartesHypPlane}, our arguments will benefit from the existence of \emph{separating} and \emph{enclosing} \emph{hyperplanes} of the support of $f$. We briefly recall these objects. Each $v \in \mathbb{R}^{n} \setminus \{0\}$ and $a \in \mathbb{R}$ define a \emph{hyperplane}
\begin{align*}
\mathcal{H}_{v,a} := \{ \mu \in \mathbb{R}^{n} \mid v \cdot \mu = a \},
\end{align*}
and two \emph{half-spaces}
\begin{align*}
\mathcal{H}^+_{v,a} := \{ \mu \in \mathbb{R}^{n} \mid v \cdot \mu \geq a \}, \qquad \mathcal{H}^-_{v,a} := \{ \mu \in \mathbb{R}^{n} \mid v \cdot \mu \leq a \}.
\end{align*}
The interiors of these half-spaces are denoted by $\mathcal{H}^{+,\circ}_{v,a}$ and $\mathcal{H}^{-,\circ}_{v,a}$. If the support of a signomial $f$ satisfies
\begin{align*}
\sigma_-(f) \subseteq \mathcal{H}^+_{v,a} \qquad \text{and} \qquad \sigma_+(f) \subseteq \mathcal{H}^-_{v,a}
\end{align*}
then we call $\mathcal{H}_{v,a}$ a \emph{separating hyperplane} of $\sigma(f)$ and $v$ is called a \emph{separating vector}.  A separating hyperplane is \emph{strict} if 
\begin{align*}
\sigma_-(f) \cap \mathcal{H}^{+,\circ}_{v,a} \neq \emptyset,
\end{align*}
meaning that the negative exponent vectors are not all contained in the hyperplane.
We call a vector $v \in \mathbb{R}^{n}$ an \emph{enclosing vector} of $\sigma_+(f)$ if there exist parallel hyperplanes $\mathcal{H}_{v,a}$, $\mathcal{H}_{v,b},  \, a \geq b$ such that
\begin{align*}
\sigma_+(f) \subseteq \mathcal{H}^-_{v,a} \cap \mathcal{H}^+_{v,b}, \qquad \text{and} \qquad \sigma_-(f) \subseteq \mathbb{R}^{n} \setminus \big( \mathcal{H}^{-,\circ}_{v,a} \cap \mathcal{H}^{+,\circ}_{v,b} \big).
\end{align*}
In that case, we call the pair $(\mathcal{H}_{v,a},\mathcal{H}_{v,b})$ a \emph{pair of enclosing hyperplanes} of $\sigma_+(f)$. A pair of enclosing hyperplanes is strict if 
\begin{align*}
\sigma_-(f) \cap  \mathcal{H}^{+,\circ}_{v,a} \neq \emptyset \quad \text{and} \quad \sigma_-(f) \cap \mathcal{H}^{-,\circ}_{v,b} \neq \emptyset.
\end{align*}

\begin{ex}
\label{Ex::ExRunning}
To illustrate the above definitions, consider the polynomial 
\begin{align}
\label{Eq_Example2}
f(x,y) = -101  x^{3} y^{2} + 50  x^{2} y^{3} + x y^{3} + y^{4} - x^{2} y - 9.5  y^{3} + 51  x^{2} + 30.5  y^{2} - 37  y + 12.
\end{align}
The set of positive and negative exponent vectors are given by:
\[ \sigma_+(f) = \{(2,3),(1,3),(0,4),(2,0),(0,2),(0,0)\}, \quad \sigma_-(f) = \{ (3,2),(2,1),(0,3),(0,1)\}.\]
The pair  $(\mathcal{H}_{v,2}, \,\mathcal{H}_{v,0}), \, v = (1,0)$ is a pair of enclosing hyperplanes of $\sigma_+(f)$, see Figure~\ref{FIG1}(a).

If we remove the two negative exponent vectors $(0,3),(0,1)$ which are contained in $\mathcal{H}^-_{v,0}$, or in other words we restrict $f$ to $A = (\mathcal{H}_{v,2}^+ \cap \sigma_-(f) )\cup \sigma_+(f)$, then the support of
\begin{align}
\label{Eq_Example1}
f_{|A}(x,y) = -101  x^{3} y^{2} + 50  x^{2} y^{3} + x y^{3} + y^{4} - x^{2} y + 51  x^{2} + 30.5  y^{2} + 12
\end{align}
has a strict separating hyperplane given by $\mathcal{H}_{v,2}, \, v = (1,0)$, see Figure~\ref{FIG1}(b).

By restricting $f$ to $B =  (\mathcal{H}_{v,0}^- \cap \sigma_-(f) )\cup \sigma_+(f)$, we have 
\begin{align}
\label{Eq_Example3}
f_{|B}(x,y) = 50  x^{2} y^{3} + x y^{3} + y^{4} - 9.5  y^{3} + 51  x^{2} + 30.5  y^{2} - 37  y + 12.
\end{align}
The hyperplane $\mathcal{H}_{-v,0}$ is a non-strict separating hyperplane of $\sigma(f_{|B})$, see Figure~\ref{FIG1}(c). The face of $\N(f_{|B})$ with outer normal vector $-v = (1,0)$ equals $\N(f_{|B}) \cap \mathcal{H}_{-v,0}$ and contains all the negative exponent vectors of $f_{|B}$. 
\end{ex}

\begin{figure}[t]
\centering
\begin{minipage}[h]{0.3\textwidth}
\centering
\includegraphics[scale=0.5]{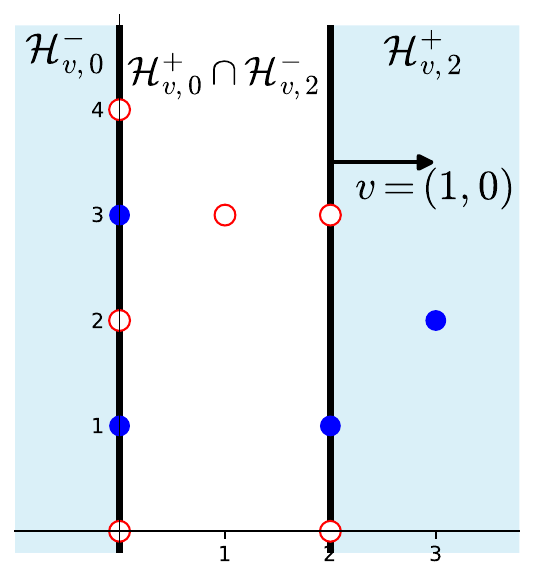}

{\small (a)}
\end{minipage}
\begin{minipage}[h]{0.3\textwidth}
\centering
\includegraphics[scale=0.5]{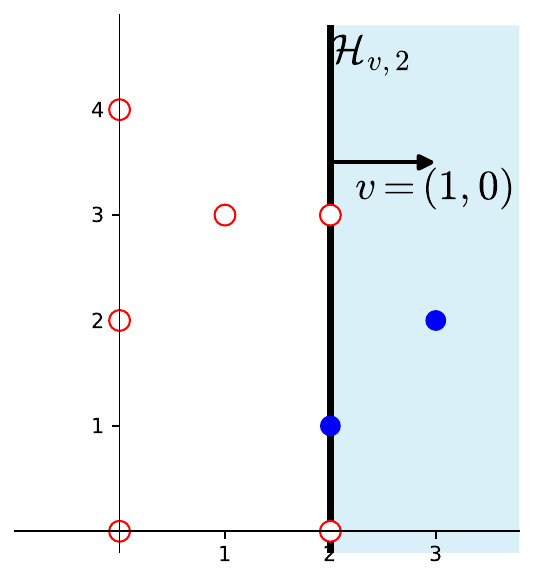}

{\small (b)}
\end{minipage}
\begin{minipage}[h]{0.3\textwidth}
\centering
\includegraphics[scale=0.5]{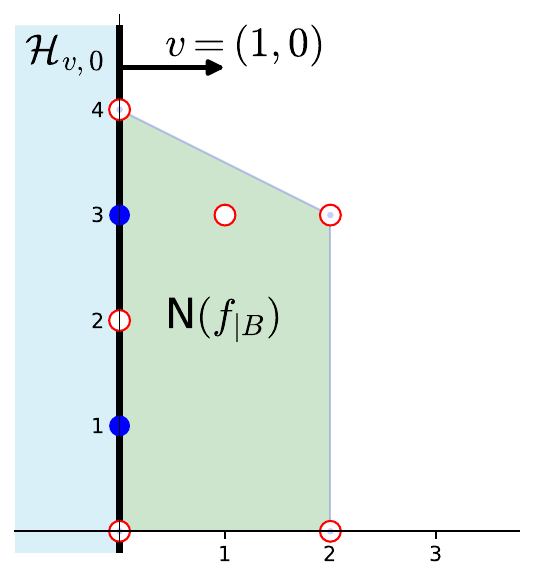}

{\small (c)}
\end{minipage}

\caption{{\small  Exponent vectors of $f,f_{|A},f_{|B}$ from Example \ref{Ex::ExRunning}. Positive exponent vectors are marked by red circles and negative exponent vectors by blue dots. (a) A pair of enclosing hyperplanes of $\sigma_+(f)$.  (b) A strict separating hyperplane of $\sigma(f_{|A})$. (c) The Newton polytope of $f_{|B}$ and a non-strict separating hyperplane of $\sigma(f_{|B})$. }}\label{FIG1}
\end{figure}

The relevance of separating and enclosing vectors arises from the following observation. Each $v \in \mathbb{R}^{n}$ and $x \in \mathbb{R}^{n}_{>0}$ induce a univariate signomial 
\begin{align}
\label{Eq_InducedSignomial} 
\mathbb{R}_{>0} \to \mathbb{R}, \quad t \mapsto f(t^v \ast x) = \sum_{\mu \in \sigma(f)} c_\mu x^\mu t^{v \cdot \mu}.
\end{align}
If $v$ is a separating (resp. enclosing) vector, then $f(t^v \ast x)$, viewed as a signomial in $t$, has  at most one (resp. two) sign changes in its coefficient sequence.

Recall that a coefficient of a univariate signomial $g$ is called the \emph{leading coefficient} $\LC(g)$ (resp. \emph{trailing coefficient} $\SC(g)$) if the corresponding exponent is the largest (resp. smallest).

\begin{ex}
Let $f$ be the signomial in \eqref{Eq_Example2} and let $A,B \subseteq \sigma(f)$ as given in Example \ref{Ex::ExRunning}. For fixed $x \in \mathbb{R}^{n}_{>0}$, the induced univariate signomials as in \eqref{Eq_InducedSignomial} equal
\begin{align*}
 f(t^1 x, t^0 y) &= (-101x^3 y^2)t^3 + (51x^2 -x^2y + 50x^2y^3 )t^2+xy^3 t^1 + y^4 - 9.5y^3+30.5y^2 - 37y +12,\\
f_{|A}(t^1 x, t^0 y) &= (-101x^3 y^2)t^3 + (51x^2 -x^2y + 50x^2y^3 )t^2+xy^3 t^1 + y^4 +30.5y^2  +12,\\
f_{|B}(t^{-1} x, t^0 y) &=     y^4 - 9.5y^3+30.5y^2 - 37y +12   +xy^3 t^{-1} +   (51x^2+ 50x^2y^3 )t^{-2}.
 \end{align*}
 
The leading coefficient of $f_{|A}(t^1 x, t^0 y)$ is $-101x^3 y^2$, which is negative for all $(x,y) \in \mathbb{R}_{>0}^2$. This phenomenon always happens. If $v$ is a strict separating vector of $\sigma(f)$, then the leading coefficient of the induced signomial in \eqref{Eq_InducedSignomial} is negative. On the contrary, this might not be true for non-strict separating vectors. For example, we have
 \begin{align*}
 & \LC(f_{|B}(t^{-1} x, t^0 y)) =  y^4 - 9.5y^3+30.5y^2 - 37y +12 < 0,  & &\text{ if }  y^4 - 9.5y^3+30.5y^2 - 37y +12 < 0, \\
 & \LC(f_{|B}(t^{-1} x, t^0 y))  = xy^3 > 0,  & &\text{ if }  y^4 - 9.5y^3+30.5y^2 - 37y +12 = 0, \\
  & \LC(f_{|B}(t^{-1} x, t^0 y)) =  y^4 - 9.5y^3+30.5y^2 - 37y +12 > 0, & &\text{ if }  y^4 - 9.5y^3+30.5y^2 - 37y +12 > 0.
 \end{align*}
\end{ex}

If $\LC(g) < 0$ (resp. $\SC(g) <0$), then $g$ attains negative values for large (resp. small) enough $t \in \mathbb{R}_{>0}$. This simple observation and the univariate Descartes' rule of signs give the following statement.

\begin{lemma}
\label{Lemma_UniSignchanges}
Let $g\colon \mathbb{R}_{>0} \to \mathbb{R}, \, g(t) = \sum_{i = 1}^{d} a_i t^{\nu_i}$ be a univariate signomial such that $g(1) < 0$.
\begin{itemize}
\item[(i)] If $\LC(g) < 0$ and $g$ has at most one sign change in its coefficient sign sequence, then $g(t) <0$ for all $t\geq 1$.
\item[(ii)] If $\SC(g) < 0$ and $g$ has at most one sign change in its coefficient sign sequence, then $g(t) <0$ for all $t\leq 1$.
\item[(iii)] If $g$ has at most two sign changes in its coefficient sign sequence and $\LC(g) < 0$ or $\SC(g) < 0,$ then $g(t) <0$ for all $t\leq 1$ or $g(t) <0$ for all $t\geq 1$.
\end{itemize}
\end{lemma}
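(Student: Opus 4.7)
The plan is to combine three tools: the univariate Descartes' rule of signs for signomials (as referenced earlier in the paper, this bounds the number of positive real roots counted with multiplicity by the number of sign changes in the coefficient sequence, even for real exponents), the asymptotic behavior dictated by $\LC(g)$ at $+\infty$ and $\SC(g)$ at $0^+$, and the Intermediate Value Theorem.

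For part (i), I would argue by contradiction: suppose there exists $t_0 \geq 1$ with $g(t_0) \geq 0$. Since $g(1) < 0$ we must have $t_0 > 1$, and since $\LC(g) < 0$ there exists $T > t_0$ with $g(T) < 0$. If $g(t_0) > 0$, then the IVT gives a root of $g$ in $(1, t_0)$ and another in $(t_0, T)$, producing two distinct positive roots, which contradicts the bound of at most one sign change. If $g(t_0) = 0$, then $t_0$ itself is a root; either $g \leq 0$ on a neighborhood of $t_0$, forcing the order of vanishing to be even and hence $\geq 2$, or $g$ takes a positive value somewhere in $[1,T]$, which again yields a second root via IVT. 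Either way the total multiplicity of positive roots is at least two, a contradiction.

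Part (ii) follows from (i) via the reciprocal substitution $s = 1/t$. Writing $g(t) = \sum_{i=1}^{d} a_i t^{\nu_i}$ with $\nu_1 < \cdots < \nu_d$, the function $\tilde g(s) := g(1/s) = \sum_{i=1}^d a_i s^{-\nu_i}$ is again a univariate signomial, its coefficient sequence (reordered by increasing exponent) is the reverse of that of $g$, so the number of sign changes is preserved, $\tilde g(1) = g(1) < 0$, and $\LC(\tilde g) = \SC(g) < 0$. Applying (i) to $\tilde g$ gives $\tilde g(s) < 0$ for all $s \geq 1$, i.e.\ $g(t) < 0$ for all $t \leq 1$. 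Part (iii) uses the same contradiction scheme but allows two sign changes: assume both conclusions fail, so there exist $t_1 < 1$ with $g(t_1) \geq 0$ and $t_2 > 1$ with $g(t_2) \geq 0$. Taking $r_1 := \sup\{t \leq 1 : g(t) = 0\}$ and $r_2 := \inf\{t \geq 1 : g(t) = 0\}$, one gets roots $r_1 < 1 < r_2$ with $g < 0$ strictly between them. Under $\LC(g) < 0$, the asymptotic analysis at $+\infty$ forces, exactly as in part (i) applied around $r_2$, either $g$ to become positive on $(r_2, T)$ and pick up an additional root in $(r_2, T)$, or $r_2$ to be a tangential zero of even multiplicity $\geq 2$; combined with $r_1$ this gives total multiplicity $\geq 3$, contradicting two sign changes. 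The case $\SC(g) < 0$ is handled symmetrically via the reciprocal substitution, where the extra multiplicity appears around $r_1$.

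The main obstacle I anticipate is the tangential case $g(t_0) = 0$ with $g$ not changing sign at $t_0$: one must argue that such a zero contributes at least two to the root count used by Descartes' rule. For univariate signomials on $\mathbb{R}_{>0}$ this is fine because the function is real analytic (after rewriting $x^{\nu}$ as $e^{\nu \log x}$), so each zero has a well-defined finite order of vanishing, and a zero where the sign does not change has even order. Once this point is made explicit, the case distinctions above are routine and the reciprocal substitution reduces the $\SC(g) < 0$ situations to the $\LC(g) < 0$ ones, so only parts (i) and the ``$\LC$'' subcase of (iii) need independent argument.
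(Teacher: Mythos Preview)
Your proposal is correct and matches the paper's approach: the paper does not spell out a proof of this lemma, merely stating that it follows from ``this simple observation and the univariate Descartes' rule of signs,'' where the observation is that $\LC(g)<0$ (resp.\ $\SC(g)<0$) forces $g$ to be negative for large (resp.\ small) $t$. Your argument is exactly the natural way to unpack that sentence---Descartes' bound on positive roots counted with multiplicity, asymptotic sign from the extremal coefficient, and the IVT---and your care with the tangential case (even-order zeros via real analyticity) fills in the only point that is not entirely routine.
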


Lemma \ref{Lemma_UniSignchanges} played a crucial role in the proof of the following theorems in \cite{DescartesHypPlane}.

\begin{thm}
\label{Thm::FirstDescartes}
Let $f\colon \mathbb{R}^{n}_{>0} \to \mathbb{R}, \, x \mapsto \sum_{\mu \in \sigma(f)} c_{\mu}x^{\mu}$  be a signomial. 
\begin{itemize}
\item[(i)] If there exists a strict separating hyperplane of $\sigma(f)$, then $f^{-1}( \mathbb{R}_{<0})$ is non-empty and contractible  \cite[Theorem 3.6]{DescartesHypPlane}.
\item[(ii)] If there exists a pair of strict enclosing hyperplanes of $\sigma_+(f)$, then $b_0(f^{-1}( \mathbb{R}_{<0})) \leq 2$ \cite[Theorem 3.8]{DescartesHypPlane}.
\item[(iii)] If $f$ has at most one negative coefficient, then $f^{-1}(\mathbb{R}_{<0})$ is either empty or logarithmically convex \cite[Theorem 3.4]{DescartesHypPlane}.
\end{itemize}
\end{thm}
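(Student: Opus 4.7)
All three parts rest on the same mechanism: to $v \in \mathbb{R}^n$ associate the univariate signomial $g_x(t) = f(t^v \ast x)$ defined in \eqref{Eq_InducedSignomial}, use the geometric hypothesis on $v$ (or the absence of negative terms in (iii)) to pin down the signs of $\LC(g_x)$ and $\SC(g_x)$ as well as the number of sign changes of $g_x$, and then invoke Lemma~\ref{Lemma_UniSignchanges} to translate this one-dimensional information into geometric statements about $f^{-1}(\mathbb{R}_{<0})$.

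Part (iii) does not actually need Lemma~\ref{Lemma_UniSignchanges}. If $\sigma_-(f)=\emptyset$ the set is empty, so assume $\sigma_-(f)=\{\nu\}$. Dividing the inequality $f(x)<0$ by $x^\nu$ and setting $y=\log x$ rewrites it as $\sum_{\mu\in\sigma_+(f)} c_\mu\, e^{(\mu-\nu)\cdot y} < -c_\nu$. The left-hand side is a positive linear combination of exponentials in $y$, hence convex, so its strict sublevel set is convex, i.e., $f^{-1}(\mathbb{R}_{<0})$ is logarithmically convex.

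For part (i), let $v$ be a strict separating vector with hyperplane $\mathcal{H}_{v,a}$. A short inspection of the definitions shows that $v_{\max}:=\max_{\mu\in\sigma(f)} v\cdot\mu$ is strictly greater than $a$ and attained only on $\sigma_-(f)$, so $\LC(g_x)<0$ for every $x\in\mathbb{R}^n_{>0}$, while the coefficient sequence of $g_x$ ordered by $v\cdot\mu$ has pattern $+,\ldots,+,?,-,\ldots,-$ and hence at most one sign change (the middle entry $?$ collects both signs at the boundary value $a$ but never creates an extra change). Lemma~\ref{Lemma_UniSignchanges}(i) then gives that $L:=\log f^{-1}(\mathbb{R}_{<0})$ is invariant under $y\mapsto y+sv$ for every $s\geq 0$, and non-emptiness follows because $\LC(g_x)<0$ forces $g_x(t)<0$ for large $t$. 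In coordinates in which $v$ is the last basis vector, $L$ is therefore the strict epigraph of an upper semi-continuous function $s_0:\mathbb{R}^{n-1}\to\mathbb{R}\cup\{-\infty\}$, an open subset which is well known to be contractible. The one careful step is supplying an explicit deformation retraction of $L$ onto a point, which I would do by ``pushing upward'' in the $v$-direction until well above any obstruction and then contracting horizontally.

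For part (ii), let $v$ be a strict enclosing vector with pair $(\mathcal{H}_{v,a},\mathcal{H}_{v,b})$. Now both $v_{\max}>a$ and $v_{\min}<b$ are attained only on $\sigma_-(f)$, so $\LC(g_x)<0$ and $\SC(g_x)<0$ for every $x$, and a short case analysis (the signs at the two ``boundary'' exponent values $a$ and $b$ may be arbitrary but never produce more than two transitions in the pattern $-,\ldots,-,?,+,\ldots,+,?,-,\ldots,-$) shows that $g_x$ has at most two sign changes. Lemma~\ref{Lemma_UniSignchanges}(iii) then decomposes $L=L^\uparrow \cup L^\downarrow$, where $L^\uparrow$ (resp.\ $L^\downarrow$) is the set of points whose upward (resp.\ downward) $v$-ray is entirely in $L$, and the desired bound $b_0(L)\leq 2$ reduces to showing each of $L^\uparrow,L^\downarrow$ is path-connected. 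I would connect $y_1,y_2\in L^\uparrow$ by the broken path $y_1\to y_1+Tv\to y_2+Tv\to y_2$: the vertical legs lie in $L^\uparrow$ by construction, and using continuity of $\LC(g_{e^z})$ in $z$ together with compactness of the straight-line path from $\pi(y_1)$ to $\pi(y_2)$, one can choose $T$ large enough that the horizontal segment lies above the largest positive root of $g_{e^z}$ uniformly along the path, so that the segment actually sits in $L^\uparrow$ and not merely in $L$. Enforcing this uniformity in $T$ so that the horizontal leg lands in $L^\uparrow$ rather than in $L\setminus L^\uparrow$ is the main obstacle I anticipate.
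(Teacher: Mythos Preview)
The paper does not prove Theorem~\ref{Thm::FirstDescartes}; it is quoted from \cite{DescartesHypPlane}, and the surrounding text only records that Lemma~\ref{Lemma_UniSignchanges} is the key ingredient there. The one place the present paper touches a proof is Remark~\ref{Remark::StrictEnclosing}, which recovers part~(ii) a posteriori from Proposition~\ref{Lema_StripBound} and \eqref{Eq:StrictEnclAB}: strict enclosing gives strict separating hyperplanes for $f_{|A}$ and $f_{|B}$, hence each has one negative component by part~(i), and surjectivity of $\varphi$ yields the bound~$2$.

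Your sketch is correct and is precisely the direct version of that same mechanism. For (iii) your convexity argument is the standard one. For (i), your reduction to the strict epigraph of an upper semi\-continuous finite function $s_0$ is right; the cleanest way to finish the ``careful step'' is to choose any continuous $h>s_0$ (which exists by paracompactness) and use the straight-line homotopy $((y',s),t)\mapsto(y',(1-t)s+t(h(y')+1))$, which stays in $L$ because both endpoints lie above $s_0(y')$, and deformation-retracts $L$ onto the graph of $h+1\cong\mathbb{R}^{n-1}$. For (ii), your decomposition $L=L^\uparrow\cup L^\downarrow$ via Lemma~\ref{Lemma_UniSignchanges}(iii) is exactly what underlies the proof of Proposition~\ref{Prop_StripBound0}; note however that you only need the horizontal leg to lie in $L$, not in $L^\uparrow$, since it suffices that all of $L^\uparrow$ sit in one component of $L$. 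The uniform choice of $T$ then follows directly from compactness of the segment and continuity of the (strictly negative) leading coefficient, so the obstacle you anticipate is milder than stated.
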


In addition to Theorem \ref{Thm::FirstDescartes}, another condition on the signed support of the signomial $f$ implying that $f$ has at most one negative connected component, is that the negative and positive exponent vectors of $f$ are separated by a simplex and its negative vertex cones \cite[Theorem 4.6]{DescartesHypPlane}. We postpone recalling this result to Section \ref{Section::OneNegCC} and continue with investigating separating and enclosing hyperplanes.

\subsection{Non-strict separating and non-strict enclosing hyperplanes}
\label{Section::NonStrict}
In the following propositions, we investigate what happens when $\sigma(f)$ (resp. $\sigma_+(f)$) has a separating (resp. enclosing) hyperplane that is not necessarily strict. In these cases, a bound on $b_0(f^{-1}( \mathbb{R}_{<0}))$ is given by the number of negative connected components of restrictions of $f$ to certain subsets of $\sigma(f)$. These technical statements serve as the core part of the proofs in Section \ref{Section::OneNegCC} and Section \ref{Section::NegAndParaFaces}.

The first such statement considers subsets of the support containing all negative exponent vectors and those positive exponent vectors which lie on the separating hyperplane.

  \begin{prop}
\label{Prop_NonStrictSepHyp}
Let $f\colon \mathbb{R}^{n}_{>0} \to \mathbb{R}, \, x \mapsto \sum_{\mu \in \sigma(f)} c_{\mu}x^{\mu}$  be a signomial whose support has a separating hyperplane $\mathcal{H}_{v,a}$. For any subset $R \subseteq \sigma(f)$ such that $\sigma_-(f) \subseteq R$ and $\mathcal{H}_{v,a}  \cap \sigma_+(f) \subseteq R$, we have:
\begin{itemize}
\item[(i)] For all $U \in \mathcal{B}_0^-(f_{|R})$, there exists a unique $V \in  \mathcal{B}_0^-(f)$ such that $U \cap f^{-1}(\mathbb{R}_{<0}) \subseteq V$.
\item[(ii)] The map
\begin{align*}
\phi \colon  \mathcal{B}_0^-(f_{|R})  \to  \mathcal{B}_0^-(f), \quad U \mapsto 
      \begin{array}{l@{}}
        \text{connected component of $f^{-1}(\mathbb{R}_{<0})$} \\
        \text{that contains } U \cap f^{-1}(\mathbb{R}_{<0}) 
      \end{array}
\end{align*}
is well defined and bijective. In particular, it holds
\begin{align*}
 b_0 ( f_{|R}^{-1}(\mathbb{R}_{<0}) ) = b_0 ( f^{-1}(\mathbb{R}_{<0}) ).
\end{align*}
\end{itemize}
\end{prop}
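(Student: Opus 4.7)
The plan is to prove (ii), from which (i) follows immediately. Since $f(x) - f_{|R}(x) = \sum_{\mu \in \sigma_+(f) \setminus R} c_\mu x^\mu \geq 0$ for $x \in \mathbb{R}_{>0}^n$, I have the inclusion $f^{-1}(\mathbb{R}_{<0}) \subseteq f_{|R}^{-1}(\mathbb{R}_{<0})$, so each $V \in \mathcal{B}_0^-(f)$ is contained in a unique $U \in \mathcal{B}_0^-(f_{|R})$; my goal is to show that the resulting inclusion-induced map $\mathcal{B}_0^-(f) \to \mathcal{B}_0^-(f_{|R})$ is the two-sided inverse of $\phi$. The crux is to verify that for every $U \in \mathcal{B}_0^-(f_{|R})$ the set $U \cap f^{-1}(\mathbb{R}_{<0})$ is non-empty and path-connected; bijectivity of $\phi$ then follows by routine component-chasing (surjectivity by inclusion, injectivity because $\phi(U_1) = \phi(U_2)$ forces $U_1$ and $U_2$ to be path-connected in $f_{|R}^{-1}(\mathbb{R}_{<0})$ through $f^{-1}(\mathbb{R}_{<0})$).

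The main technical input is a sign comparison along the separating direction $v$. For $y \in U$, set $h_y(t) := f_{|R}(t^v \ast y)$ and $g_y(t) := f(t^v \ast y)$; each $\mu \in \sigma(f) \setminus R$ lies in $\sigma_+(f) \cap \mathcal{H}^{-,\circ}_{v,a}$, so satisfies $v \cdot \mu < a$. Consequently, the leading $v$-exponents of $h_y$ and $g_y$ agree and $\LC(h_y) = \LC(g_y)$. I claim this common leading coefficient is strictly negative for every $y \in U$. If $\mathcal{H}_{v,a}$ is a strict separating hyperplane, $\LC(h_y)$ is a sum of strictly negative monomials evaluated at a positive point, hence negative. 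Otherwise $\sigma_-(f) \subseteq \mathcal{H}_{v,a}$ and $\LC(h_y) = f_{|\mathcal{H}_{v,a}}(y)$, while the difference $f_{|R}(y) - f_{|\mathcal{H}_{v,a}}(y) = \sum_{\mu \in R \cap \sigma_+(f), \, v\cdot\mu < a} c_\mu y^\mu$ is a sum of positive monomials, so $\LC(h_y) \leq f_{|R}(y) < 0$.

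Armed with this claim, Lemma \ref{Lemma_UniSignchanges}(i) applies to $h_y$ — which has at most one sign change since $v$ separates $\sigma(f_{|R})$, and satisfies $h_y(1) = f_{|R}(y) < 0$ — yielding $h_y(t) < 0$ for all $t \geq 1$. Thus the ray $t \mapsto t^v \ast y$ remains in $U$ for every $t \geq 1$, and because $\LC(g_y) < 0$ it enters $f^{-1}(\mathbb{R}_{<0})$ for $t$ sufficiently large, proving non-emptiness. For path-connectedness, pick $y_1, y_2 \in U \cap f^{-1}(\mathbb{R}_{<0})$ and a path $\gamma\colon [0,1] \to U$ between them; the map $s \mapsto \LC(g_{\gamma(s)})$ is continuous and strictly negative on the compact image of $\gamma$, so a uniform asymptotic estimate produces a $T \geq 1$ with $g_{\gamma(s)}(T) < 0$ for all $s$, and Lemma \ref{Lemma_UniSignchanges}(i) applied to $g_{y_1}, g_{y_2}$ keeps the rays $t \mapsto t^v \ast y_i$ in $f^{-1}(\mathbb{R}_{<0})$ for $t \in [1,T]$. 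Concatenating these two rays with $s \mapsto T^v \ast \gamma(s)$ produces the required path. I expect the main obstacle to be the uniform bound $\LC(h_y) < 0$ on all of $U$ when the separating hyperplane is non-strict; this is exactly where the hypothesis $\sigma_+(f) \cap \mathcal{H}_{v,a} \subseteq R$ is essential, since only under this assumption does $\LC(h_y)$ coincide with $f_{|\mathcal{H}_{v,a}}(y)$ and inherit its sign from $f_{|R}(y) < 0$.
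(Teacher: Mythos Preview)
Your proof is correct and follows essentially the same approach as the paper: establish the leading-coefficient inequality $\LC(g_y)=\LC(h_y)<0$ for $y\in U$ (using the hypothesis $\sigma_+(f)\cap\mathcal{H}_{v,a}\subseteq R$ precisely in the non-strict case), invoke Lemma~\ref{Lemma_UniSignchanges}(i) to push rays $t\mapsto t^v\ast y$ into $f^{-1}(\mathbb{R}_{<0})$ while staying in $U$, and then shift a path $\gamma$ in $U$ by a uniform $T^v$ to land inside $f^{-1}(\mathbb{R}_{<0})$. The only cosmetic difference is how the uniform $T$ is obtained: the paper tracks the unique simple root $\tau(s)$ of $g_{\gamma(s)}$ via the Implicit Function Theorem and takes $t_0>\max_s\tau(s)$, whereas you use compactness of $\gamma([0,1])$ and the strict negativity of $s\mapsto\LC(g_{\gamma(s)})$ to get a uniform asymptotic bound---both arguments are standard and equivalent in strength here.
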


Before giving the proof of Proposition \ref{Prop_NonStrictSepHyp}, we illustrate the idea behind it by the following example.

\begin{ex}
\label{Ex::Proof24}
The hyperplane $\mathcal{H}_{v,1}$ with $v=(1,0)$ is a separating hyperplane of the support~of
\[ f = 1+ x_1 + x_2 - 4x_1 x_2+ x_1  x_2^2.\]
The set $R = \{(1,1), (1,0),(0,1),(1,2)\}$ satisfies the assumptions in Proposition  \ref{Prop_NonStrictSepHyp}. If $x,y \in f^{-1}(\mathbb{R}_{<0})$ lie in the same connected component of $f_{|R}^{-1}(\mathbb{R}_{<0})$, then there exists a continuous path $\gamma\colon [0,1] \to \mathbb{R}^n_{>0}$ with $\gamma(0) = x, \, \gamma(1) = y$ and the paths 
\[\gamma_{v,x} \colon [0,1] \to \mathbb{R}^n_{>0}, \, t \mapsto t^v \ast x, \qquad \gamma_{v,y} \colon [0,1] \to \mathbb{R}^n_{>0}, \, t \mapsto t^v \ast y\]
are contained in $f^{-1}(\mathbb{R}_{<0})$. Using $v$, we push the path $\gamma$ such that its image becomes a subset of $ f^{-1}(\mathbb{R}_{<0})$. More precisely, we consider the path 
\[\tilde{\gamma}\colon [0,1] \to \mathbb{R}^n_{>0}, \quad s \mapsto t_0^v\ast \gamma(s)\]
for a fixed large enough $t_0 > 0$, see Figure \ref{FIG2_new}(b). Concatenating the paths $\tilde{\gamma}, \, \gamma_{v,x} \, \gamma_{v,y}$, we obtain a path between $x$ and $y$ in $f^{-1}(\mathbb{R}_{<0})$. Thus, $x$ and $y$ are contained in the same connected component of $f^{-1}(\mathbb{R}_{<0})$. We will use this observation to prove part (i) of Proposition \ref{Prop_NonStrictSepHyp}.

For $x_1,x_2 \in f^{-1}_{|R}(\mathbb{R}_{<0})$, if $t_0^{v} \ast x_1$ and $t_0^{v} \ast x_2$ are in the same connected component of $f^{-1}(\mathbb{R}_{<0})$ for some $t_0 >0$, then $x_1, x_2$ lie in the same connected component of  $f^{-1}_{|R}(\mathbb{R}_{<0})$, see Figure \ref{FIG2_new}(c). This fact will be used in the proof of Proposition \ref{Prop_NonStrictSepHyp} to show injectivity of the map~$\phi$.
\end{ex}

\begin{figure}[t]
\centering
\begin{minipage}[h]{0.32\textwidth}
\centering
\includegraphics[scale=0.5]{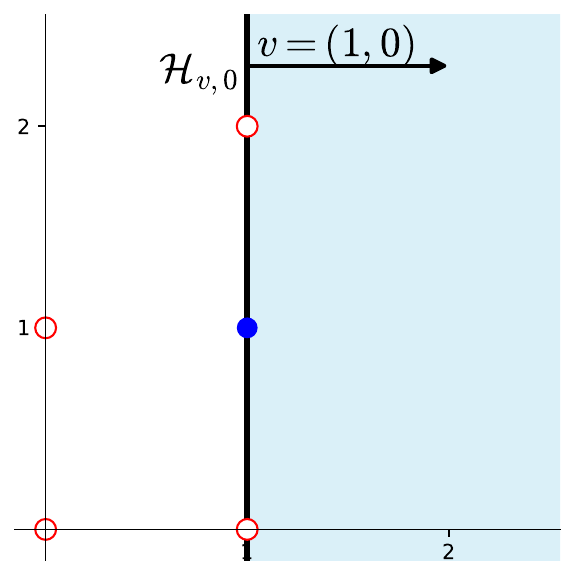}

{\small (a)}
\end{minipage}
\begin{minipage}[h]{0.32\textwidth}
\centering
\includegraphics[scale=0.5]{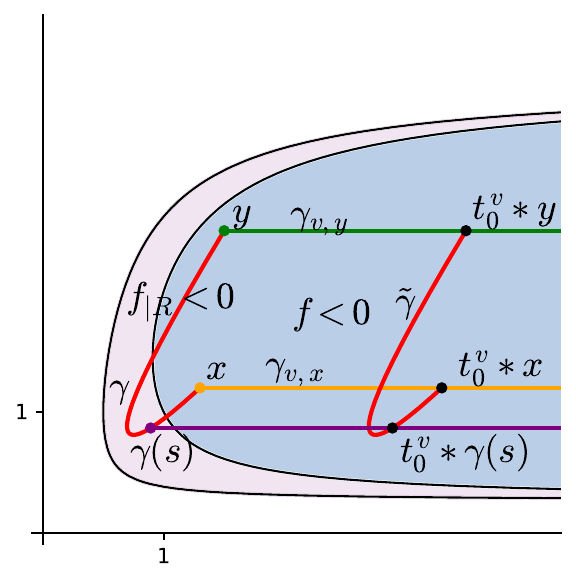}

{\small (b)}
\end{minipage}
\begin{minipage}[h]{0.32\textwidth}
\centering
\includegraphics[scale=0.5]{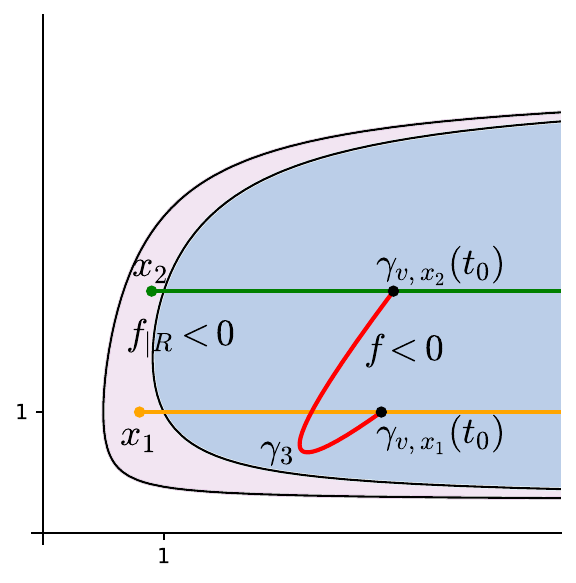}

{\small (c)}
\end{minipage}

\caption{{\small  (a) Exponent vectors of $f  = 1+ x_1 + x_2 - 4x_1 x_2+ x_1  x_2^2$ from Example~\ref{Ex::Proof24} and a separating hyperplane of $\sigma(f)$. Positive exponent vectors are depicted by red circles, the negative exponent vector by blue dot. (b),(c) An illustration of the paths from Example \ref{Ex::Proof24} and the proof of Proposition \ref{Prop_NonStrictSepHyp}.}}\label{FIG2_new}
\end{figure}

\begin{proof}[Proof of Proposition \ref{Prop_NonStrictSepHyp}]
First observe that
\begin{align}
\label{Eq_fFcontf}
f^{-1}(\mathbb{R}_{<0} ) \subseteq f_{|R}^{-1}(\mathbb{R}_{<0} ),
\end{align}
because $f_{|R}(x) \leq f(x)$ for all $x \in \mathbb{R}^{n}_{>0}$.

If $\sigma_+(f)  \subseteq R$, then $R = \sigma(f)$ and we are done. So we assume that there exists $\alpha_0 \in \sigma_+(f)$ such that $\alpha_0 \notin \mathcal{H}_{v,a}$. Since $\mathcal{H}_{v,a}$ is a separating hyperplane of $\sigma(f)$, we have
\begin{align}
\label{Eq::ProofProp_NonStrictSepHyp}
  v \cdot \beta  \geq  a \geq v \cdot \alpha \qquad \text{for all }  \beta \in \sigma_-(f), \,\alpha \in \sigma_+(f) \qquad \text{and} \quad a > v \cdot \alpha_0.
 \end{align}

For $x \in \mathbb{R}^{n}_{>0}$ , consider the path
\[ \gamma_{v,x}\colon [1,\infty) \to \mathbb{R}^n_{>0}, \quad t \mapsto t^v \ast x \]
and the univariate signomial  $f(t^v \ast x)$ as in $\eqref{Eq_InducedSignomial}$. Note that $f(\gamma_{v,x}(t)) =  f(t^v \ast x)$ for all $t \geq 1$.

If $v$ is a strict separating vector of $\sigma(f)$, then $f(t^v \ast x)$ has a negative leading coefficient. If $v$ is not strict, then 
\[ \LC( f(t^v\ast x)) = f_{|\mathcal{H}_{v,a}}(x) \quad \text{and} \quad f_{|R}(x) =  f_{|\mathcal{H}_{v,a}}(x) + \sum_{\alpha \in \sigma_+(f) \cap R \cap \mathcal{H}^{-,\circ}_{v,a}} c_\alpha x^\alpha. \]
Thus, if $f_{|R}(x) <0$, then  $ \LC( f(t^v\ast x)) < 0$.

By \eqref{Eq::ProofProp_NonStrictSepHyp},  $f(t^v\ast x)$ and $f_{|R}(t^v\ast x)$ have at most one sign change in their coefficient sign sequence. By Lemma \ref{Lemma_UniSignchanges}(i), we have:
\begin{align}
\label{Eq_ProofNegFace}
&\im \gamma_{v,x} \subseteq f^{-1}(\mathbb{R}_{<0}), & &\text{for all } x \in f^{-1}(\mathbb{R}_{<0}), \\
\label{Eq_ProofNegFace_ftilde}
 &\im \gamma_{v,x} \subseteq f_{|R}^{-1}(\mathbb{R}_{<0}) \quad \text{and} \quad  \gamma_{v,x}(t) \in f^{-1}(\mathbb{R}_{<0}), &  &\text{for all }  x \in f_{|R}^{-1}(\mathbb{R}_{<0}) < 0, \, t\gg 1.
\end{align}
This gives that each connected component of $f_{|R}^{-1}(\mathbb{R}_{<0})$ has a non-empty intersection with $f^{-1}(\mathbb{R}_{<0})$. For an illustration of the path $\gamma_{v,x}$ we refer to Figure \ref{FIG2_new}(b)(c).

To prove (i), let $U \in \mathcal{B}_0^-(f_{|R})$, $x,y \in U \cap f^{-1}(\mathbb{R}_{<0})$ and consider a continuous path 
\[ \gamma\colon [0,1] \to \mathbb{R}^{n}_{>0}\]
such that $\gamma(0) = x, \, \gamma(1) = y$ and $\gamma(s) \subseteq f_{|R}^{-1}(\mathbb{R}_{<0} )$ for all $s \in [0,1]$.

We construct now a path between $x$ and $y$ that is contained in $f^{-1}(\mathbb{R}_{<0})$. For a fixed $s \in [0,1]$, since $f_{|R}(\gamma(s)) <0$, and the signomial $f(t^v\ast \gamma(s))$ has exactly one sign change in its coefficient sign sequence, Descartes' rule of signs implies that $f(t^v\ast \gamma(s))$ has exactly one positive real root $\tau(s)$, which is simple. Now, the Implicit Function Theorem \cite{EDWARDS197356} implies that the function
\[\tau\colon [0,1] \to \mathbb{R}_{>0}, \quad s \mapsto \tau(s)\] 
is continuous. So $T := \max_{s \in [0,1]}\tau(s)$ exists and we have:
\[f(t_0^v\ast \gamma(s)) < 0 \qquad \text{for some } t_0 > \max\{1,T\} \text{ and all } s \in [0,1].\]
Thus, the path
\[  [0,1] \to \mathbb{R}^n_{>0}, \quad s \to t_0^v \ast \gamma(s) \]
connects $t_0^v \ast x$ and $t_0^v \ast y$, and is contained in  $f^{-1}(\mathbb{R}_{<0})$ (cf. Figure \ref{FIG2_new}(b)). By \eqref{Eq_ProofNegFace}, the paths $\gamma_{v,x}, \gamma_{v,y}$ are contained in $f^{-1}(\mathbb{R}_{<0})$ and join $x$ and $t_0 \ast x$, resp. $y$ and $t_0 \ast y$. Thus, if $x,y$ are in the same connected component of $f_{|R}^{-1}(\mathbb{R}_{<0})$, then they are also in the same connected component of $f^{-1}(\mathbb{R}_{<0})$, which gives (i).

By (i), the map $\phi$ is well defined. Now we show that $\phi$ is bijective. From \eqref{Eq_fFcontf}, it follows that every $W \in \mathcal{B}_0^{-}(f)$ lies in a connected component of  $f_{|R}^{-1}(\mathbb{R}_{<0} )$, which is a preimage of $W$ under $\phi$. Thus, $\phi$ is surjective.

To show injectivity of $\phi$, consider $U_1, U_2 \in \mathcal{B}_{0}^{-}(f_{|R})$ such that $\phi(U_1) = \phi(U_2)$ and let $x_1 \in U_1, x_2 \in U_2$. By \eqref{Eq_ProofNegFace_ftilde}
\begin{align*}
 \im \gamma_{v,x_1}  \subseteq f_{|R}^{-1}(\mathbb{R}_{<0}),  \quad  \im \gamma_{v,x_2} \subseteq f_{|R}^{-1}(\mathbb{R}_{<0}) \quad \text{and} \\
 \gamma_{v,x_1}(t_0) \in f^{-1}(\mathbb{R}_{<0}), \quad  \gamma_{v,x_2}(t_0) \in f^{-1}(\mathbb{R}_{<0}) \text{ for some } t_0\gg 1.
\end{align*}
Since $U_1 \cap f^{-1}(\mathbb{R}_{<0})$ and $U_2 \cap f^{-1}(\mathbb{R}_{<0})$ are in the same connected component of $f^{-1}(\mathbb{R}_{<0})$, there exists a continuous path $\gamma_3$ between $ \gamma_{v,x_1}(t_0)$ and $ \gamma_{v,x_2}(t_0)$ such that $\im \gamma_3 \subseteq f^{-1}(\mathbb{R}_{<0})$ (cf. Figure \ref{FIG2_new}(c)). By \eqref{Eq_fFcontf}, we have $\im \gamma_3  \subseteq f_{|R}^{-1}(\mathbb{R}_{<0})$. Thus,  $\gamma_{v,x_1}, \gamma_3$ and  $\gamma_{v,x_2}$ give a continuous path between $x_1$ and $x_2$ contained in $f_{|R}^{-1}(\mathbb{R}_{<0})$. Thus, $x,y$ lie in the same connected component of $f_{|R}^{-1}(\mathbb{R}_{<0})$, which implies that $U_1 = U_2$.
\end{proof}

In the following, we generalize the proof of Theorem \ref{Thm::FirstDescartes}(ii) to the case where the enclosing hyperplanes are not necessarily strict. To ease the reading of the proofs, we discuss first the notation we are going to use. Given a pair of enclosing hyperplanes $(\mathcal{H}_{v,a}, \mathcal{H}_{v,b})$ of $\sigma_+(f)$, we define
\begin{align}
\label{Eq::DefAB}
A = (\mathcal{H}^{+}_{v,a} \cap \sigma_-(f)) \cup \sigma_+(f), \qquad B = (\mathcal{H}^{-}_{v,b} \cap \sigma_-(f)) \cup \sigma_+(f). 
\end{align}
That is, both $A$ and $B$ contain all the positive exponent vectors of $f$. The set $A$ (resp. $B$) contains the negative exponent vectors from one (resp. the other) side of the area enclosed by the hyperplanes $\mathcal{H}_{v,a}, \mathcal{H}_{v,b}$. Note that by definition, $\mathcal{H}_{v,a}$ (resp $\mathcal{H}_{-v,-b}$) is a separating hyperplane of the support of the restricted signomials $f_{|A}$ (resp $f_{|B})$. If $(\mathcal{H}_{v,a}, \mathcal{H}_{v,b})$ is a pair of strict enclosing hyperplanes, then  $\mathcal{H}_{v,a}$ and $\mathcal{H}_{-v,-b}$ are  strict separating hyperplanes of $\sigma(f_{|A})$ and $\sigma(f_{|B})$ respectively. In this case, by Theorem \ref{Thm::FirstDescartes}(i) we have:
\begin{align}
\label{Eq:StrictEnclAB}
b_0\big( f_{|A}^{-1}(\mathbb{R}_{<0})\big) = 1, \qquad b_0\big( f_{|B}^{-1}(\mathbb{R}_{<0})\big) = 1.
\end{align}

\begin{ex}
\label{Ex:NewEx2}
For the signomial 
\[ f = 2 - 2 x_1^2 - 2 \, x_2^{2}  + 5 \, x_1 x_2  + x_1^{2} x_2^{2} -2 \, x_1^{3} x_2,\]
the pair of hyperplanes $(\mathcal{H}_{v,2},\mathcal{H}_{v,0})$ with $v=(1,0)$ is a pair of enclosing hyperplanes of $\sigma(f)$, see Figure~\ref{FIG3_new}(a). The sets $A,B$ from \eqref{Eq::DefAB} are given by
\begin{align*}
A &= (\mathcal{H}^{+}_{v,2} \cap \sigma_-(f)) \cup \sigma_+(f) =  \{ (2,0),(3,1),(2,2),(1,1),(0,0) \} \\ 
B &= (\mathcal{H}^{-}_{v,0} \cap \sigma_-(f)) \cup \sigma_+(f) =  \{ (0,2),(2,2),(1,1),(0,0) \}.
\end{align*}
 For fixed $x \in \mathbb{R}^{2}_{>0}$, the induced univariate signomials as in \eqref{Eq_InducedSignomial} equal
\begin{equation}
\label{Eq:NewExInduced}
\begin{aligned}
 f(t^1 x, t^0 y) &= -2 x_1^3x_2 t^3 + (x_1^2x_2^2-2x_1^2)t^2+5x_1x_2 t +(2-2x_2^2)\\
f_{|A}(t^1 x, t^0 y) &=  -2 x_1^3x_2 t^3 + (x_1^2x_2^2-2x_1^2)t^2+5x_1x_2 t +2\\
f_{|B}(t^{-1} x, t^0 y) &=    (2-2x_2^2) +5x_1x_2 t^{-1} +  x_1^2x_2^2 t^{-2} .
 \end{aligned}
 \end{equation}
 By construction, the signomials $f_{|A}(t^1 x, t^0 y),\, f_{|B}(t^{-1} x, t^0 y)$ have at most one sign change in their coefficient sequence. Using this, we will show in the proof of  Proposition \ref{Prop_StripBound0} that for any $x \in f^{-1}(\mathbb{R}_{<0})$ one of the paths
 \begin{align}
 \label{Eq::NewExPath}
  \gamma_v \colon[1,\infty) \to \mathbb{R}^n_{>0} , \quad t \mapsto t^v \ast x, \qquad \gamma_{-v} \colon[1,\infty) \to \mathbb{R}^n_{>0} , \quad t \mapsto t^{-v} \ast x
 \end{align}
 connects x to a point in $f^{-1}_{|A}(\mathbb{R}_{<0}) \cup f^{-1}_{|B}(\mathbb{R}_{<0})$, see Figure~\ref{FIG3_new}(b). One technical difficulty we must address in the proof of Proposition  \ref{Prop_StripBound0} is that the leading and trailing coefficients of induced signomials in \eqref{Eq:NewExInduced} depend on $x$. For example,
 \begin{align*}
  &   \SC(f(t^{1} x, t^0 y)) = \LC(f_{|B}(t^{-1} x, t^0 y))  = (2-2x_2^2) < 0,   &\SC(f_{|A}(t^{1} x, t^0 y)) = 2  \quad  &\text{ if } 2-2x_2^2 <  0, \\
 &   \SC(f(t^{1} x, t^0 y)) = \LC(f_{|B}(t^{-1} x, t^0 y))  = 5 x_1 x_2 > 0,   &\SC(f_{|A}(t^{1} x, t^0 y)) = 2  \quad &\text{ if } 2-2x_2^2 = 0. \\
 \end{align*}
\end{ex}

 \begin{figure}[t]
\centering
\begin{minipage}[h]{0.4\textwidth}
\centering
\includegraphics[scale=0.65]{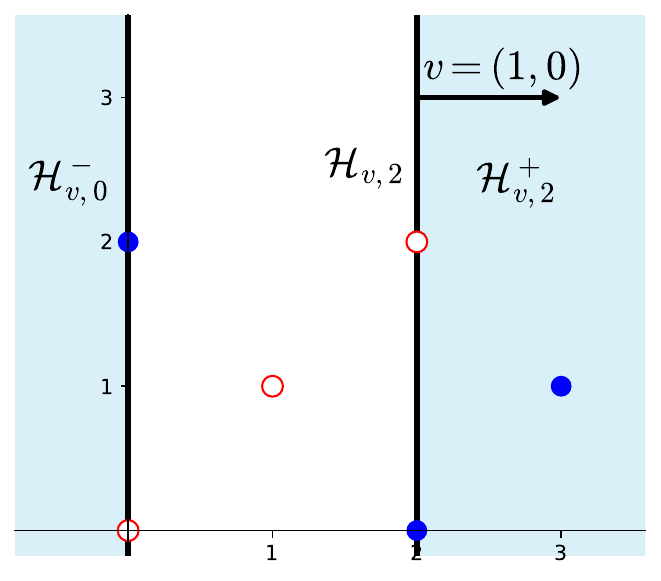}

{\small (a)}
\end{minipage}
\hspace{25pt}
\begin{minipage}[h]{0.4\textwidth}
\centering
\includegraphics[scale=0.65]{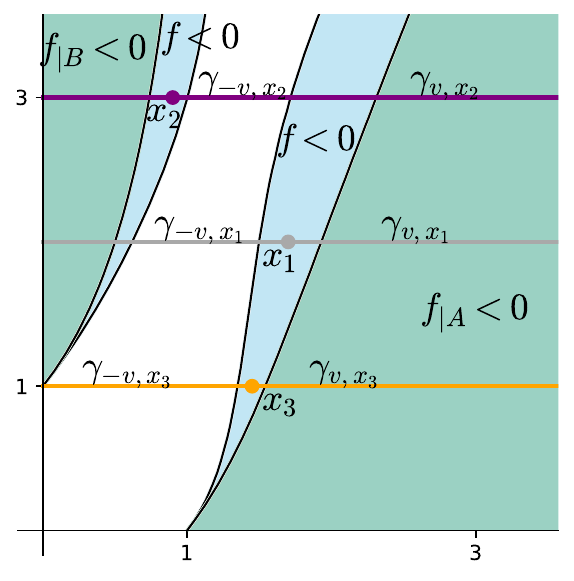}

{\small (b)}
\end{minipage}

\caption{{\small  (a) A pair of enclosing hyperplanes of the support of the signomial from Example \ref{Ex:NewEx2}. (b) Paths as defined in \eqref{Eq::NewExPath} connecting $x_1,x_2,x_3$ to a point in $f_{|A}^{-1}(\mathbb{R}_{<0}) \cap f_{|B}^{-1}(\mathbb{R}_{<0})$.   }}\label{FIG3_new}
\end{figure}
 
 In the next proposition, we consider subsets $R \subseteq A$, $S \subseteq B$ such that $f_{|A}$  and $R$ (resp. $f_{|B}$ and $S$) satisfy the hypothesis of Proposition \ref{Prop_NonStrictSepHyp}.
 
\begin{prop}
\label{Prop_StripBound0}
Let $f\colon \mathbb{R}^{n}_{>0} \to \mathbb{R}, \, x \mapsto \sum_{\mu \in \sigma(f)} c_{\mu}x^{\mu}$  be a signomial and $(\mathcal{H}_{v,a}, \mathcal{H}_{v,b})$ be a pair of enclosing hyperplanes of $\sigma_+(f)$. Let $A,B \subseteq \sigma(f)$ be as in \eqref{Eq::DefAB} and let $R \subseteq A, \, S \subseteq B$ such that 
\begin{itemize}
\item $R \cap \sigma_-(f) = A \cap \sigma_-(f), \qquad  \mathcal{H}_{v,a} \cap \sigma_+(f) \subseteq R$,
\item  $S \cap \sigma_-(f) = B \cap \sigma_-(f), \qquad  \mathcal{H}_{v,b} \cap \sigma_+(f) \subseteq S$.
\end{itemize}
 Then the map
\begin{align*}
\varphi \colon &\mathcal{B}_0^-(f_{|R}) \sqcup \mathcal{B}_0^-(f_{|S}) \to \mathcal{B}_0^-(f), \\
 U &\mapsto \begin{cases} 
 \text{ connected component of $f^{-1}( \mathbb{R}_{<0})$ that contains } U \cap f_{|A}^{-1}(\mathbb{R}_{<0}),  & \textrm{if } U \in \mathcal{B}_0^-(f_{|R}) , \\
\text{ connected component of $f^{-1}( \mathbb{R}_{<0})$ that contains }U \cap f_{|B}^{-1}(\mathbb{R}_{<0}), & \textrm{if } U \in \mathcal{B}_0^-(f_{|S }). \end{cases}
\end{align*}
is well defined and surjective. In particular, we have
\[ b_0\big( f^{-1}(\mathbb{R}_{<0} )\big) \leq  b_0\big( f_{|R }^{-1}(\mathbb{R}_{<0}) \big) +b_0\big( f_{|S }^{-1}(\mathbb{R}_{<0}) \big),\]
\end{prop}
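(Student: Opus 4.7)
My plan is to prove well-definedness of $\varphi$ as a direct consequence of Proposition~\ref{Prop_NonStrictSepHyp} applied to the two restrictions $f_{|A}$ and $f_{|B}$, and to prove surjectivity by the same ``push along $t^v \ast x$'' device as in the proof of Proposition~\ref{Prop_NonStrictSepHyp}, only now choosing between the direction $v$ and $-v$ depending on $x$. The bound on $b_0$ is immediate from surjectivity of $\varphi$.

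For well-definedness, note that the pair $(f_{|A}, R)$ satisfies the hypotheses of Proposition~\ref{Prop_NonStrictSepHyp} with separating hyperplane $\mathcal{H}_{v, a}$, and likewise $(f_{|B}, S)$ satisfies them with $\mathcal{H}_{-v, -b}$. For $U \in \mathcal{B}_0^-(f_{|R})$, Proposition~\ref{Prop_NonStrictSepHyp} ensures that $U \cap f_{|A}^{-1}(\mathbb{R}_{<0})$ is a non-empty connected component of $f_{|A}^{-1}(\mathbb{R}_{<0})$. Since $f - f_{|A}$ is a sum of monomials with negative coefficients, $f_{|A}^{-1}(\mathbb{R}_{<0}) \subseteq f^{-1}(\mathbb{R}_{<0})$, so $U \cap f_{|A}^{-1}(\mathbb{R}_{<0})$ is a non-empty connected subset of $f^{-1}(\mathbb{R}_{<0})$ and therefore lies in a unique connected component. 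The same argument applies to $S$ and $f_{|B}$, so $\varphi$ is well-defined.

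For surjectivity, fix $W \in \mathcal{B}_0^-(f)$ and $x \in W$, and set $g(t) = f(t^v \ast x)$. Because $v$ is an enclosing vector, $g$ has at most two sign changes. The crucial point is the claim that $\LC(g) < 0$ or $\SC(g) < 0$: if both were non-negative, then $\LC(g) \geq 0$ would force $\max_{\mu \in \sigma(f)} v \cdot \mu \leq a$ (otherwise the leading term of $g$ would be a pure sum over $\sigma_-(f)$, hence negative), and similarly $\SC(g) \geq 0$ would force $\min_{\mu \in \sigma(f)} v \cdot \mu \geq b$; the enclosing hypothesis then locates $\sigma_-(f)$ inside $\mathcal{H}_{v, a} \cup \mathcal{H}_{v, b}$, while every exponent vector of $f$ strictly between $\mathcal{H}_{v, b}$ and $\mathcal{H}_{v, a}$ is positive. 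Decomposing $f(x)$ according to the level sets of $v \cdot \mu$ would then express it as a sum of non-negative pieces, contradicting $f(x) < 0$.

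Once this claim is in hand, Lemma~\ref{Lemma_UniSignchanges}(iii) gives that $g(t) < 0$ either for all $t \geq 1$ or for all $t \leq 1$. In the first branch, the asymptotic behavior of $g$ at $+\infty$ forces $\LC(g) < 0$; since $A$ contains every $\mu \in \sigma(f)$ with $v \cdot \mu \geq a$, the leading coefficient of $f_{|A}(t^v \ast x)$ (as a signomial in $t$) also equals $\LC(g)$ and is negative, so $f_{|A}(T^v \ast x) < 0$ for some $T \gg 1$ while the path $t \mapsto t^v \ast x$, $t \in [1, T]$, stays in $f^{-1}(\mathbb{R}_{<0})$. The second branch is handled symmetrically via $-v$ and $f_{|B}$. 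Hence $W \cap f_{|A}^{-1}(\mathbb{R}_{<0}) \neq \emptyset$ or $W \cap f_{|B}^{-1}(\mathbb{R}_{<0}) \neq \emptyset$, and Proposition~\ref{Prop_NonStrictSepHyp} furnishes a preimage $U \in \mathcal{B}_0^-(f_{|R}) \sqcup \mathcal{B}_0^-(f_{|S})$ with $\varphi(U) = W$. The main obstacle is the key claim $\LC(g) < 0$ or $\SC(g) < 0$; it requires carefully locating $\sigma_-(f)$ on the two bounding hyperplanes when both leading and trailing coefficients of $g$ would be non-negative, together with the observation that the middle exponent vectors of $f$ are all positive.
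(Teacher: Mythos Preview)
Your approach mirrors the paper's: well-definedness via Proposition~\ref{Prop_NonStrictSepHyp} together with the inclusions $f_{|A}^{-1}(\R_{<0}), f_{|B}^{-1}(\R_{<0}) \subseteq f^{-1}(\R_{<0})$, and surjectivity by pushing along $t^{\pm v}$. Your claim that $\LC(g)<0$ or $\SC(g)<0$ is correct (the level-set decomposition works), and Lemma~\ref{Lemma_UniSignchanges}(iii) then applies.

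There is, however, a gap in the surjectivity step. From ``$g(t)<0$ for all $t\geq 1$'' you correctly deduce $\LC(g)<0$, and then assert that $\LC\bigl(f_{|A}(t^v\ast x)\bigr)=\LC(g)$ because $A$ contains every $\mu$ with $v\cdot\mu\geq a$. But $f$ and $f_{|A}$ differ exactly in the negative exponents with $v\cdot\mu\leq b$, so the two leading coefficients agree only when the \emph{actual} leading exponent $d'$ of $g$ satisfies $d'>b$. This can fail: if there is no $\beta\in\sigma_-(f)$ with $v\cdot\beta>a$, no positive exponent strictly between $\mathcal{H}_{v,b}$ and $\mathcal{H}_{v,a}$, and $x$ happens to satisfy $f_{|\mathcal{H}_{v,a}}(x)=0$, then $d'\leq b$. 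In that situation $f_{|A}(t^v\ast x)$ may have \emph{positive} leading coefficient (coming from positives on $\mathcal{H}_{v,b}$), so the path $t\mapsto t^v\ast x$ never reaches $f_{|A}^{-1}(\R_{<0})$, and your first branch does not conclude.

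The paper sidesteps this by tracking the partial sums $S_{x,a}(t)=\sum_{v\cdot\mu\geq a}c_\mu x^\mu t^{v\cdot\mu}$ and $S_{x,b}(t)$ rather than $\LC(g),\SC(g)$, and performing a three-way case split on the signs of $\LC(S_{x,a})$ and $\SC(S_{x,b})$, explicitly covering the case where one of $S_{x,a},S_{x,b}$ is identically zero. A quick patch for your version: note that in the problematic case $d'\leq b$ one automatically has $\SC(g)<0$ with trailing exponent $<a$, so the \emph{other} branch (via $-v$ and $f_{|B}$) succeeds; alternatively, since $W$ is open, choose $x\in W$ off the hypersurface $\{f_{|\mathcal{H}_{v,a}}=0\}$.
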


\begin{proof}
The idea of the proof follows closely the arguments in \cite[Theorem 3.8]{DescartesHypPlane}. Since $f_{|A}$ and $f_{|B}$ are obtained by removing some of the monomials with negative coefficient from $f$, $f(x) \leq f_{|A}(x)$ and $f(x) \leq f_{|B}(x)$ for all $x \in \mathbb{R}^{n}_{>0}$. This implies:
\begin{align*}
f_{|A}^{-1}(\mathbb{R}_{<0}) \subseteq f^{-1}(\mathbb{R}_{<0}), \qquad \text{and} \qquad f_{|B}^{-1}(\mathbb{R}_{<0}) \subseteq f^{-1}(\mathbb{R}_{<0}).
\end{align*}
Thus, every connected component of $f_{|A}^{-1}(\mathbb{R}_{<0})$ and $f_{|B}^{-1}(\mathbb{R}_{<0})$ is contained in a unique connected component of $f^{-1}(\mathbb{R}_{<0}) $. By this observation, the map
\[ \psi\colon \mathcal{B}_0^-(f_{|A}) \sqcup \mathcal{B}_0^-(f_{|B}) \to \mathcal{B}_0^-(f), \quad V \mapsto \text{connected component of $f^{-1}(\mathbb{R}_{<0})$ that contains } V\]
is well defined.

From Proposition \ref{Prop_NonStrictSepHyp} applied to $f_{|A}$ and $R$ or $f_{|B}$ and $S$, it follows that the map
\begin{align*}
\phi \colon &\mathcal{B}_0^-(f_{|R}) \sqcup \mathcal{B}_0^-(f_{|S}) \to \mathcal{B}_0^-(f_{|A}) \sqcup \mathcal{B}_0^-(f_{|B}) , \\
 U &\mapsto \begin{cases} 
 \text{ connected component of $f_{|A}^{-1}( \mathbb{R}_{<0})$ that contains } U \cap f_{|A}^{-1}(\mathbb{R}_{<0}),  & \textrm{if } U \in \mathcal{B}_0^-(f_{|R}) , \\
\text{ connected component of $f_{|B}^{-1}( \mathbb{R}_{<0})$ that contains }U \cap f_{|B}^{-1}(\mathbb{R}_{<0}), & \textrm{if } U \in \mathcal{B}_0^-(f_{|S }). \end{cases}
\end{align*}
is well defined and bijective. Since $\varphi$ is the composition of $\phi$ and $\psi$, it is enough to show that $\psi$ is surjective.  

To prove that $\psi$ is surjective, for $W \in \mathcal{B}^-_0(f)$ and any $x \in W$, we show that one of the two paths
\begin{align*}
\gamma_v \colon [1,\infty) \to \mathbb{R}^n_{>0}, \quad t \mapsto t^v \ast x, \qquad \gamma_{-v} \colon [1,\infty) \to \mathbb{R}^n_{>0}, \quad t \mapsto t^{-v} \ast x
\end{align*}
\begin{itemize}
\item[(a)] connects $x$ to a point $y \in f_{|A}^{-1}(\mathbb{R}_{<0}) \cup f_{|B}^{-1}(\mathbb{R}_{<0})$, and
\item[(b)] the image of the path is contained in $ f^{-1}(\mathbb{R}_{<0})$.
\end{itemize}
For an example of these paths, we refer to Figure \ref{FIG3_new}. If (a) and (b) are satisfied, then any connected component $V \in \mathcal{B}_0^-(f_{|A}) \sqcup \mathcal{B}_0^-(f_{|B}) $ that contains $y$ will be a preimage of $W$ under $\psi$. To see this, we define
 \begin{align*}
 S_{x,a}(t) := \sum_{\mu\in \sigma(f), \, a \leq v \cdot \mu} c_\mu x^\mu t^{v\cdot \mu}, \qquad  S_{x,b}(t) :=   \begin{cases}
 \sum\limits_{\mu\in \sigma(f), \,  v \cdot \mu \leq b} c_\mu x^\mu t^{v \cdot \mu}, \text{ if } a \neq b\\
 \sum\limits_{\mu\in \sigma(f), \,  v \cdot \mu < b} c_\mu x^\mu t^{v \cdot \mu}, \text{ if } a = b
\end{cases}.
\end{align*}
 and  consider the signomials
\begin{align*}
&\tilde{f}_A(t) := f_{|A}(t^{v} \ast x) = S_{x,a}(t) +  \sum_{\mu\in \sigma_+(f),  v \cdot \mu < a} c_\mu x^\mu t^{v\cdot \mu},  \\
&\tilde{f}_B(t) := f_{|B}(t^{v} \ast x) = \sum_{\mu\in \sigma_+(f),  b < v \cdot \mu } c_\mu x^\mu t^{v \cdot \mu} + S_{x,b}(t).
\end{align*}
A simple argument shows that
\begin{equation}
\label{Eq_ProofA}
\begin{aligned}
 \text{If } \LC(\tilde{f}_A) < 0, &\text{ then } \tilde{f}_A(t_A) < 0 \text{ for some } t_A > 1  \text{ and } \gamma_v \text{ connects } x \text{ to } f_{|A}^{-1}(\mathbb{R}_{<0}). \\
  \text{If } \SC(\tilde{f}_B) < 0, &\text{ then }\tilde{f}_B(t_B) < 0 \text{ for some } t_B < 1 \text{ and } \gamma_{-v} \text{ connects } x \text{ to } f_{|B}^{-1}(\mathbb{R}_{<0}).
\end{aligned}
\end{equation}

Since $(\mathcal{H}_{v,a},\mathcal{H}_{v,b})$ is a pair of enclosing hyperplanes of $\sigma_+(f)$, the univariate signomial
\begin{align*}
&\tilde{f}(t) := f(t^{v} \ast x) = S_{x,a}(t) +  \sum_{\substack{\mu \in \sigma_+(f), \\ b < v \cdot \mu < a}} c_\mu x^\mu t^{v \cdot \mu} + S_{x,b}(t) 
\end{align*}
has at most two sign changes in its coefficient sign sequence. We denote the number of sign changes by $\signvar(\tilde{f})$. By Lemma \ref{Lemma_UniSignchanges}, we have:
\begin{align} \nonumber
&\im(\gamma_{v}) \subseteq f^{-1}(\mathbb{R}_{<0})\, \text{or} \, \im(\gamma_{-v}) \subseteq f^{-1}(\mathbb{R}_{<0}), & &\text{ if } \LC(\tilde{f}) < 0 \text{ and }  \SC(\tilde{f}) < 0,\\ 
\label{Eq::Proof_Prop_StripBound0}
&\im(\gamma_{v}) \subseteq f^{-1}(\mathbb{R}_{<0}), & &\text{ if } \LC(\tilde{f}) < 0 \text{ and } \signvar(\tilde{f}) \leq 1, \\
&\im(\gamma_{-v}) \subseteq f^{-1}(\mathbb{R}_{<0}), & &\text{ if } \SC(\tilde{f}) < 0 \text{ and } \signvar(\tilde{f}) \leq 1.  \nonumber
\end{align}

In view of \eqref{Eq_ProofA} and \eqref{Eq::Proof_Prop_StripBound0}, to obtain (a) and (b) all we need is to show that one of the following holds:
\begin{itemize}
\item[(I)]  $\LC(\tilde{f})=\LC(\tilde{f}_A)  < 0 \text{ and }  \SC(\tilde{f})=\SC(\tilde{f}_B)  < 0$,
\item[(II)] $\LC(\tilde{f})=\LC(\tilde{f}_A) < 0 \text{ and } \signvar(\tilde{f}) \leq 1$,
\item[(III)] $\SC(\tilde{f})=\SC(\tilde{f}_B) < 0 \text{ and } \signvar(\tilde{f}) \leq 1$. 
\end{itemize}

The leading and trailing coefficients $\LC(\tilde{f}),\LC(\tilde{f}_A),\SC(\tilde{f})$ and  $\SC(\tilde{f}_B)$ depend on the signs of $S_{x,a}(t), \, S_{x,b}(t)$. If $S_{x,a}(t)$ (resp. $S_{x,b}(t)$) is not the zero polynomial, then $\LC(\tilde{f}) = \LC(\tilde{f}_A) = \LC( S_{x,a})$ (resp. $\SC(\tilde{f}) = \SC(\tilde{f}_B) = \SC(S_{x,b})$). As $\tilde{f}(1) <0$, we have that at least one of $S_{x,a}(t)$ and $S_{x,b}(t)$  is not the zero polynomial, furthermore $\LC(S_{x,a}) < 0$ or $\SC(S_{x,b})< 0$. We have the following cases:

\begin{itemize}
\item If $\LC(S_{x,a}) < 0$ and $\SC(S_{x,b}) < 0$, then $S_{x,a} \not\equiv 0$, $S_{x,b} \not\equiv 0$  and (I) is satisfied. 
\item If $\LC(S_{x,a}) < 0$ and $\SC(S_{x,b}) > 0$ or $S_{x,b} \equiv 0$, then (II) holds.
\item  If $\SC(S_{x,b}) < 0$ and $\LC(S_{x,a}) > 0$ or $S_{x,a} \equiv 0$, then (III) holds.
\end{itemize}
\end{proof}

If $(\mathcal{H}_{v,a}, \mathcal{H}_{v,b})$ is a pair of enclosing hyperplanes of $\sigma_+(f)$, then by Proposition \ref{Prop_NonStrictSepHyp}(iii), the number of negative connected components of $f_{|R}$ and $f_{|S}$ does not depend on the choice of $R$ and $S$, as long as they satisfy the assumptions of Proposition \ref{Prop_StripBound0}. So to reduce the computational cost of finding $b_0(f_{|R}^{-1}(\mathbb{R}_{<0}))$ and $b_0(f_{|S}^{-1}(\mathbb{R}_{<0}))$ one might choose the sets $R, S$ as small as possible, that is $R = \mathcal{H}_{v,a}^+ \cap \sigma(f)$ and $S = \mathcal{H}_{v,b}^- \cap \sigma(f)$. However, if $R$ and $S$ are as large as possible, that is $R = A$ and $S = B$, then one can refine the bound from Proposition \ref{Prop_StripBound0} by identifying negative connected components of $f_{|A}$ and $f_{|B}$ that intersect. To make this precise, consider the bipartite graph with vertex set and edges defined as
\begin{equation} \label{Eq::GraphABVertex}
\begin{aligned}
\mathcal{B}_{A,B} &:= \mathcal{B}_0^-(f_{|A}) \sqcup \mathcal{B}_0^-(f_{|B}), \\ 
\mathcal{E}_{A,B} &:= \{ (U,V) \mid U \in \mathcal{B}_0^-(f_{|A} ), V \in \mathcal{B}_0^-(f_{|B}) \colon U \cap V \neq \emptyset \}.
\end{aligned}
\end{equation}

\begin{prop}
\label{Lema_StripBound}
Let $f\colon \mathbb{R}^{n}_{>0} \to \mathbb{R}, \, x \mapsto \sum_{\mu \in \sigma(f)} c_{\mu}x^{\mu}$  be a signomial, $(\mathcal{H}_{v,a}, \mathcal{H}_{v,b})$ a pair of enclosing hyperplanes  of $\sigma_+(f)$, and let $A,B \subseteq \sigma(f)$ be as in \eqref{Eq::DefAB}. Then 
\[ b_0\big( f^{-1}(\mathbb{R}_{<0} )\big) \leq C \leq b_0\big( f_{|A }^{-1}(\mathbb{R}_{<0}) \big) +b_0\big( f_{|B }^{-1}(\mathbb{R}_{<0}) \big),\]
where $C$ denotes the number of connected components of the graph $(\mathcal{B}_{A,B}, \mathcal{E}_{A,B})$ from \eqref{Eq::GraphABVertex}.
\end{prop}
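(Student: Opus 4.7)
The plan is to prove the two inequalities separately, with the upper bound being essentially trivial and the lower bound being a refinement of the surjectivity map $\psi$ constructed in the proof of Proposition \ref{Prop_StripBound0}.

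For the upper bound $C \leq b_0(f_{|A}^{-1}(\mathbb{R}_{<0})) + b_0(f_{|B}^{-1}(\mathbb{R}_{<0}))$, I would simply observe that $C$ is the number of connected components of a graph on the vertex set $\mathcal{B}_{A,B}$, which cannot exceed the cardinality of the vertex set itself, namely $\# \mathcal{B}_0^-(f_{|A}) + \# \mathcal{B}_0^-(f_{|B})$.

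For the lower bound $b_0(f^{-1}(\mathbb{R}_{<0})) \leq C$, I would revisit the map
\[
\psi \colon \mathcal{B}_0^-(f_{|A}) \sqcup \mathcal{B}_0^-(f_{|B}) \longrightarrow \mathcal{B}_0^-(f)
\]
that was constructed and shown to be surjective in the proof of Proposition \ref{Prop_StripBound0} (using the inclusions $f_{|A}^{-1}(\mathbb{R}_{<0}) \subseteq f^{-1}(\mathbb{R}_{<0})$ and $f_{|B}^{-1}(\mathbb{R}_{<0}) \subseteq f^{-1}(\mathbb{R}_{<0})$). The key observation is that $\psi$ is constant on each connected component of the graph $(\mathcal{B}_{A,B}, \mathcal{E}_{A,B})$: if $(U,V) \in \mathcal{E}_{A,B}$, then picking any $x \in U \cap V$ shows that $\psi(U)$ and $\psi(V)$ both equal the unique connected component of $f^{-1}(\mathbb{R}_{<0})$ containing $x$. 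By induction along a path in the graph, $\psi$ agrees on any two vertices in the same graph-component.

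Thus $\psi$ factors through the quotient $\pi_0(\mathcal{B}_{A,B}, \mathcal{E}_{A,B})$, inducing a surjective map from a set of cardinality $C$ onto $\mathcal{B}_0^-(f)$, which yields the desired bound. I do not anticipate any serious obstacle here, since all the real analytic work (constructing the paths via Lemma \ref{Lemma_UniSignchanges} and establishing surjectivity of $\psi$) has already been carried out in Proposition \ref{Prop_StripBound0}; the present statement is a purely combinatorial tightening where one identifies pairs of components in $\mathcal{B}_0^-(f_{|A})$ and $\mathcal{B}_0^-(f_{|B})$ that already intersect inside $\mathbb{R}^n_{>0}$ and hence cannot be mapped by $\psi$ to distinct negative components of $f$.
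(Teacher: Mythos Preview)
Your proposal is correct and follows essentially the same approach as the paper. Both argue the upper bound by the trivial observation that a graph has at most as many components as vertices, and both derive the lower bound by noting that the surjective map $\varphi$ (which you call $\psi$) from Proposition~\ref{Prop_StripBound0} with $R=A$, $S=B$ is constant on each connected component of the bipartite graph and therefore factors through the quotient $\mathcal{B}_{A,B}/{\sim}$, yielding a surjection from a set of size $C$ onto $\mathcal{B}_0^-(f)$.
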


\begin{proof}
The inequality $C \leq b_0\big( f_{|A }^{-1}(\mathbb{R}_{<0}) \big) +b_0\big( f_{|B }^{-1}(\mathbb{R}_{<0})\big)$ follows from the fact that a graph cannot have more connected components than the number of its vertices.

Let $\varphi$ be the surjective map from Proposition \ref{Prop_StripBound0} with $R = A, S = B$. If the intersection of two connected sets is non-empty, then their union is connected. This gives that if $U,V \in \mathcal{B}_{A,B}$ lie in the same connected component of the graph $(\mathcal{B}_{A,B},\mathcal{E}_{A,B})$, then $U$ and $V$ are contained in the same connected component of $f^{-1}(\mathbb{R}_{<0})$. This yields a well defined map
\[\tilde{\varphi}\colon \mathcal{B}_{A,B}\big/{\sim} \to \mathcal{B}_0^-(f), \quad [U] \mapsto \varphi(U) = \text{ connected component of $f^{-1}(\mathbb{R}_{<0})$ that contains $U$}.\]
where $\sim$ denotes the equivalence relation that identifies two elements of $\mathcal{B}_{A,B}$ if they lie in the same connected component of the graph ($\mathcal{B}_{A,B},\mathcal{E}_{A,B})$, and $[U]$ denotes the equivalence class of $U$ under this relation. Since $\varphi$ is surjective, so is $\tilde{\varphi}$, which gives $b_0\big( f^{-1}(\mathbb{R}_{<0} )\big) \leq C$.
\end{proof}

\begin{remark}
\label{Remark::StrictEnclosing}
If $(\mathcal{H}_{v,a}, \mathcal{H}_{v,b})$ is a pair of strict enclosing hyperplanes of $\sigma_+(f)$, then from \eqref{Eq:StrictEnclAB} and Proposition \ref{Lema_StripBound} follows that 
\[ b_0\big( f^{-1}(\mathbb{R}_{<0} )\big) \leq 2,\]
which recovers the bound from Theorem \ref{Thm::FirstDescartes}(ii).
\end{remark}

\begin{ex}
\label{Ex_ParaFaces}
We revisit the signomial $f$ in \eqref{Eq_Example2} from Example \ref{Ex::ExRunning}, whose support and a pair of enclosing hyperplanes  $(\mathcal{H}_{v,2}, \mathcal{H}_{v,0})$, $v=(1,0)$ of $\sigma_+(f)$ are depicted in Figure \ref{FIG1}(a). The signomials $f_{|A}, \, f_{|B}$ are shown in \eqref{Eq_Example1} and \eqref{Eq_Example3}.

Using the \texttt{Maple} \cite{maple} function \texttt{IsEmpty()}, one can check that $f_{|A}^{-1}(\mathbb{R}_{<0}) \cap  f_{|B}^{-1}(\mathbb{R}_{<0}) = \emptyset$. Thus, the graph $( \mathcal{B}_{A,B}, \mathcal{E}_{A,B})$ does not have any edges and the bound $C$ on $b_0\big( f^{-1}(\mathbb{R}_{<0} )\big)$ provided by Proposition \ref{Lema_StripBound} equals
\[ C = b_0\big( f_{|A}^{-1}(\mathbb{R}_{<0} )\big) + b_0\big( f_{|B}^{-1}(\mathbb{R}_{<0} )\big). \]

Since $\sigma(f_{|A})$ has a strict separating hyperplane, $b_0\big( f_{|A}^{-1}(\mathbb{R}_{<0} )\big) = 1$ by Theorem \ref{Thm::FirstDescartes}(i). In Example \ref{Ex_NegFace}, we show that $b_0\big( f_{|B}^{-1}(\mathbb{R}_{<0} )\big) = 2$. Thus, Proposition \ref{Lema_StripBound} gives the bound $C = 3$. In this case, $C$ equals the number of negative connected components of $f$. In Example \ref{Ex_Running}, we will consider a signomial where the bound $C$ is not sharp.
   
Using the function \texttt{implicit\_plot()} from  \texttt{SageMath} \cite{sagemath}, we depicted the sets $f^{-1}(\mathbb{R}_{<0})$, $f_{|A}^{-1}(\mathbb{R}_{<0})$ and   $f_{|B}^{-1}(\mathbb{R}_{<0})$ in Figure \ref{FIG2}(a),(b).
\end{ex}

\begin{figure}[t]
\centering
\begin{minipage}[h]{0.3\textwidth}
\centering
\includegraphics[scale=0.5]{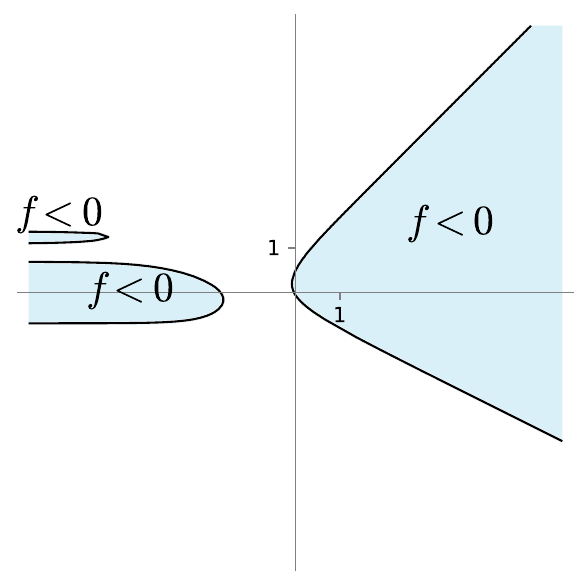}

{\small (a)}
\end{minipage}
\begin{minipage}[h]{0.3\textwidth}
\centering
\includegraphics[scale=0.5]{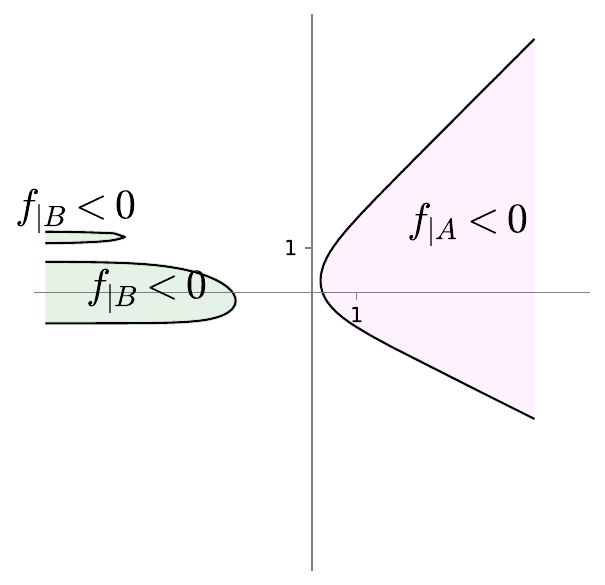}

{\small (b)}
\end{minipage}
\begin{minipage}[h]{0.3\textwidth}
\centering
\includegraphics[scale=0.5]{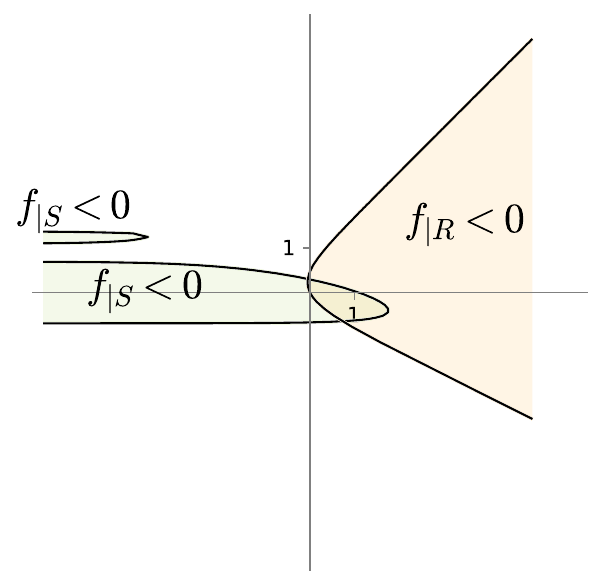}

{\small (c)}
\end{minipage}
\caption{{\small  Negative connected components of $f$,$f_{|A}$,$f_{|B}$,$f_{|R}$,$f_{|S}$ from Example \ref{Ex_ParaFaces} and Example \ref{Ex::RSCounterExample}}. For better visibility, the figures show the images of these sets under the coordinate-wise natural logarithm map $\mathbb{R}^2_{>0} \to \mathbb{R}^2, (x,y) \mapsto (\log(x),\log(y) )$. } \label{FIG2} 
\end{figure}

One might wonder if, for other choices of $R$ and $S$, the bound from Proposition \ref{Prop_StripBound0} could be refined using a similar idea as in the proof of Proposition \ref{Lema_StripBound}. We will show that such a refinement is possible in the special case where all the exponent vectors are contained in the enclosing hyperplanes, or in other words the exponent vectors lie on parallel faces of the Newton polytope (Proposition \ref{Thm_ParallelFaces}). Example \ref{Ex::RSCounterExample} shows that Proposition \ref{Lema_StripBound} might fail for other choices of $R$ and $S$.

\begin{ex}
\label{Ex::RSCounterExample}
Consider again the signomial $f$ in \eqref{Eq_Example2} from Example \ref{Ex::ExRunning} and choose
\[ R =  \{ (3,2),(2,1),(2,3),(1,3),(2,0) \}, \quad S =   \{ (0,3),(0,1),(1,3),(0,4),(0,2),(0,0)\}.\]
Similarly to \eqref{Eq::GraphABVertex}, we investigate the graph whose vertices are
\begin{align*}
\mathcal{B}_{R,S} &:= \mathcal{B}_0^-(f_{|R}) \sqcup \mathcal{B}_0^-(f_{|S})  \text{ and edges} \\ 
\mathcal{E}_{R,S} &:= \{ (U,V) \mid U \in \mathcal{B}_0^-(f_{|R} ), V \in \mathcal{B}_0^-(f_{|S}) \colon U \cap V \neq \emptyset \}.
\end{align*}
Since $\sigma(f_{|R})$ has a strict separating hyperplane, $f_{|R}$ has one negative connected component. From Theorem  \ref{Thm_NegFace}, it will follow that $f_{|S}$ has two negative connected components. Thus, the graph $(\mathcal{B}_{R,S},\mathcal{E}_{R,S})$ has three vertices. One can check that $(2,1) \in f_{|R}^{-1}(\mathbb{R}_{<0}) \cap  f_{|S}^{-1}(\mathbb{R}_{<0}) \neq \emptyset$. This implies that the graph has at most two connected components. Since $b_0( f^{-1}(\mathbb{R}_{<0} ) ) = 3$, the number of connected components of the graph $(\mathcal{B}_{R,S},\mathcal{E}_{R,S})$ cannot be an upper bound on $b_0( f^{-1}(\mathbb{R}_{<0} ))$. In Figure \ref{FIG2}(c), we depicted the sets $f_{|R}^{-1}(\mathbb{R}_{<0} ),f_{|S}^{-1}(\mathbb{R}_{<0} )$.
\end{ex}

To compute the bound $C$ in Proposition \ref{Lema_StripBound}, one should check whether the negative connected components of two signomials intersect. We finish this subsection with a criterion on the signed supports of two signomials guaranteeing that the intersection of their negative connected components is non-empty. 

\begin{prop}
\label{Prop_Box}
Let $f,g \colon \mathbb{R}^{n}_{>0} \to \mathbb{R}$  be signomials. If there exist negative exponent vectors $\beta_1 \in \sigma_-(f), \, \beta_2 \in \sigma_-(g)$ such that 
\[ \Conv\big( \{ \beta_1, \beta_2 \}\big) \cap \Conv\big( \sigma_+(f) \cup \sigma_+(g) \big) = \emptyset,\]
then  $f^{-1}(\mathbb{R}_{<0}) \cap g^{-1}(\mathbb{R}_{<0}) \neq \emptyset$.
\end{prop}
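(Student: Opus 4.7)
The plan is to exploit a strict separating hyperplane between the two disjoint compact convex sets in the hypothesis, and then reduce to the behaviour of the induced univariate signomials from \eqref{Eq_InducedSignomial}. Both convex hulls in the statement are convex hulls of finite sets, hence compact; by the strict separation theorem there exist $v \in \mathbb{R}^{n}\setminus\{0\}$ and $a \in \mathbb{R}$ with
\[
v \cdot \beta_1 > a, \qquad v \cdot \beta_2 > a, \qquad v \cdot \mu < a \text{ for every } \mu \in \sigma_+(f) \cup \sigma_+(g).
\]

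Next, I would fix an arbitrary $x \in \mathbb{R}^{n}_{>0}$ and examine the univariate signomials
\[
\tilde{f}(t) := f(t^v \ast x) = \sum_{\mu \in \sigma(f)} c_\mu x^\mu t^{v\cdot \mu}, \qquad \tilde{g}(t) := g(t^v \ast x) = \sum_{\mu \in \sigma(g)} c_\mu x^\mu t^{v\cdot \mu}.
\]
The key observation is that the maximum of $v \cdot \mu$ over $\sigma(f)$ is at least $v \cdot \beta_1 > a$, while every positive exponent $\mu \in \sigma_+(f)$ satisfies $v \cdot \mu < a$. Consequently every $\mu \in \sigma(f)$ attaining this maximum must lie in $\sigma_-(f)$, so $\LC(\tilde{f})$ is a sum of terms $c_\mu x^\mu$ with $c_\mu < 0$, and hence $\LC(\tilde{f}) < 0$. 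The same argument applied with $\beta_2$ in place of $\beta_1$ gives $\LC(\tilde{g}) < 0$.

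To finish, since a univariate signomial with negative leading coefficient tends to $-\infty$ as $t \to \infty$, I would pick $t_0 > 0$ large enough so that simultaneously $\tilde{f}(t_0) < 0$ and $\tilde{g}(t_0) < 0$. The point $t_0^{v} \ast x \in \mathbb{R}^{n}_{>0}$ then belongs to $f^{-1}(\mathbb{R}_{<0}) \cap g^{-1}(\mathbb{R}_{<0})$, proving non-emptiness. I do not anticipate any real obstacle here; the only subtlety worth flagging is that other negative exponents of $f$ (resp.\ $g$) may have $v$-value exceeding $v\cdot \beta_1$ (resp.\ $v\cdot\beta_2$), but this only helps, since their coefficients are likewise negative, so the identification of the sign of $\LC(\tilde f)$ and $\LC(\tilde g)$ is unaffected.
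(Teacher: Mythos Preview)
Your proof is correct and follows essentially the same approach as the paper: separate the segment $\Conv(\beta_1,\beta_2)$ from the positive exponents by a hyperplane, then use that the induced univariate signomials $f(t^v\ast x)$ and $g(t^v\ast x)$ have negative leading coefficient to find a common point where both are negative. The only cosmetic difference is that the paper first bounds $f$ and $g$ above by the restricted signomials $f_{|\sigma_+(f)\cup\{\beta_1\}}$ and $g_{|\sigma_+(g)\cup\{\beta_2\}}$ before passing to the univariate argument, whereas you argue directly (and correctly) that any exponent realizing the maximal $v$-value must be negative; your version is slightly more streamlined.
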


\begin{proof}
We start by observing
\begin{align}
\label{Eq_ProofBox1_Prop}
 f(x)  \leq f_{|\sigma_+(f) \cup \{ \beta_1 \} }(x), \quad \text{and} \quad g(x)  \leq g_{|\sigma_+(g) \cup \{ \beta_2 \}}(x) \quad \text{ for all } x \in \mathbb{R}^{n}_{>0},
\end{align}
since $f, f_{|\sigma_+(f) \cup \{ \beta_1 \} }$ (resp. $g,  g_{|\sigma_+(g) \cup \{ \beta_2 \}}$) differ only in monomials with negative coefficients. 

As non-intersecting closed convex sets can be separated by an affine hyperplane see e.g. \cite[Section 2.2, Theorem 1]{grunbaum2003convex}, from $\Conv( \{ \beta_1, \beta_2 \}) \cap \Conv( \sigma_+(f) \cup \sigma_+(g) ) = \emptyset$ it follows that there exists $w \in \mathbb{R}^{n}$ such that
\[ w \cdot \beta_1 > w \cdot \alpha, \quad w \cdot \beta_2 > w \cdot \alpha, \quad \text{for all } \alpha \in \sigma_+(f) \cup \sigma_+(g).\]
For a fixed $x \in \mathbb{R}^{n}_{>0}$, both univariate signomials
\[ f_{|\sigma_+(f) \cup \{ \beta_1 \}}(t^w\ast x), \qquad  g_{|\sigma_+(g) \cup \{ \beta_2 \}} (t^w\ast x)\]
have negative leading coefficients. Thus, there exists $t_0 \gg 0$ such that
\[ f_{|\sigma_+(f) \cup \{ \beta_1 \}}(t_0^w \ast x) < 0 \quad \text{ and } \quad  g_{|\sigma_+(g) \cup \{ \beta_2 \}}(t_0^w \ast x) < 0.\]
By \eqref{Eq_ProofBox1_Prop}, we have $t_0^w \ast x \in f_{|\sigma_+(f) \cup \{ \beta_1 \}}^{-1}(\mathbb{R}_{<0}) \cap g_{|\sigma_+(g) \cup \{ \beta_2 \}}^{-1}(\mathbb{R}_{<0}) \subseteq f^{-1}(\mathbb{R}_{<0}) \cap g^{-1}(\mathbb{R}_{<0})$, which completes the proof.
\end{proof}

\subsection{One negative connected component}
\label{Section::OneNegCC}
Building on the results of Section \ref{Section::NonStrict}, we describe conditions on the signs of the coefficients of $f$ and its support $\sigma(f)$ that guarantee that $f$ has one negative connected component. Similarly to Theorem \ref{Thm::FirstDescartes}(i), for polynomials satisfying these conditions the number of negative connected components does not depend on the values of the coefficients but only on their signs.

\begin{thm}
\label{Thm_Box}
Let $f\colon \mathbb{R}^{n}_{>0} \to \mathbb{R}, \, x \mapsto \sum_{\mu \in \sigma(f)} c_{\mu}x^{\mu}$  be a signomial such that $\sigma_+(f)$ has a pair of strict enclosing hyperplanes $(\mathcal{H}_{v,a}, \mathcal{H}_{v,b})$. Assume that there exist negative exponent vectors $\beta_1, \beta_2 \in \sigma_-(f)$ such that 
\begin{itemize}
\item $\beta_1 \in \mathcal{H}^{+}_{v,a}$, $\beta_2 \in \mathcal{H}^{-}_{v,b}$ and 
\item $\Conv( \{ \beta_1, \beta_2 \}) \cap \Conv( \sigma_+(f) ) = \emptyset$.
\end{itemize}
Then $f^{-1}(\mathbb{R}_{<0})$ is non-empty and connected.
\end{thm}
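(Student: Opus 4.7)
The plan is to combine Proposition \ref{Lema_StripBound} with Proposition \ref{Prop_Box}, using the strictness of the enclosing hyperplanes to reduce the problem to showing that two specific restricted negative sets intersect.

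First I would set $A = (\mathcal{H}^{+}_{v,a} \cap \sigma_-(f)) \cup \sigma_+(f)$ and $B = (\mathcal{H}^{-}_{v,b} \cap \sigma_-(f)) \cup \sigma_+(f)$ as in \eqref{Eq::DefAB}. Since $(\mathcal{H}_{v,a}, \mathcal{H}_{v,b})$ is a pair of \emph{strict} enclosing hyperplanes of $\sigma_+(f)$, the hyperplane $\mathcal{H}_{v,a}$ is a strict separating hyperplane of $\sigma(f_{|A})$ and $\mathcal{H}_{-v,-b}$ is a strict separating hyperplane of $\sigma(f_{|B})$. Applying Theorem \ref{Thm::FirstDescartes}(i) to each restriction yields
\[ b_0\big( f_{|A}^{-1}(\mathbb{R}_{<0})\big) = 1, \qquad b_0\big( f_{|B}^{-1}(\mathbb{R}_{<0})\big) = 1,\]
and in particular both sets are non-empty. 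Consequently the graph $(\mathcal{B}_{A,B}, \mathcal{E}_{A,B})$ from \eqref{Eq::GraphABVertex} has exactly two vertices.

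Next I would show these two vertices are connected by an edge, i.e.\ $f_{|A}^{-1}(\mathbb{R}_{<0}) \cap f_{|B}^{-1}(\mathbb{R}_{<0}) \neq \emptyset$, by invoking Proposition \ref{Prop_Box} with the pair $(f_{|A}, f_{|B})$ and the distinguished negative exponents $\beta_1 \in \sigma_-(f_{|A})$ (which lies in $A$ since $\beta_1 \in \mathcal{H}^{+}_{v,a} \cap \sigma_-(f)$) and $\beta_2 \in \sigma_-(f_{|B})$. The key observation is that by construction $\sigma_+(f_{|A}) = \sigma_+(f_{|B}) = \sigma_+(f)$, so that
\[ \Conv(\{\beta_1, \beta_2\}) \cap \Conv(\sigma_+(f_{|A}) \cup \sigma_+(f_{|B})) = \Conv(\{\beta_1, \beta_2\}) \cap \Conv(\sigma_+(f)) = \emptyset,\]
which is exactly the hypothesis of the theorem. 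Hence Proposition \ref{Prop_Box} provides the required non-empty intersection, and the graph $(\mathcal{B}_{A,B}, \mathcal{E}_{A,B})$ is connected.

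Finally, by Proposition \ref{Lema_StripBound},
\[ b_0\big( f^{-1}(\mathbb{R}_{<0})\big) \leq C = 1,\]
where $C$ is the number of connected components of the graph. Since $f_{|A}^{-1}(\mathbb{R}_{<0}) \subseteq f^{-1}(\mathbb{R}_{<0})$ is already non-empty, we conclude $b_0(f^{-1}(\mathbb{R}_{<0})) = 1$, which is the desired claim. The only real obstacle is bookkeeping: one must check that $\sigma_+(f_{|A})$ and $\sigma_+(f_{|B})$ both coincide with $\sigma_+(f)$, and that strictness of the enclosing hyperplanes really transfers to strictness of the separating hyperplanes of the restricted signomials — both being straightforward from the definitions in \eqref{Eq::DefAB}.
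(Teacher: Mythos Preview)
Your proposal is correct and follows essentially the same argument as the paper's own proof: define $A,B$ as in \eqref{Eq::DefAB}, use strictness of the enclosing hyperplanes (via Theorem~\ref{Thm::FirstDescartes}(i), i.e.\ \eqref{Eq:StrictEnclAB}) to get $b_0(f_{|A}^{-1}(\mathbb{R}_{<0}))=b_0(f_{|B}^{-1}(\mathbb{R}_{<0}))=1$, apply Proposition~\ref{Prop_Box} to obtain $f_{|A}^{-1}(\mathbb{R}_{<0})\cap f_{|B}^{-1}(\mathbb{R}_{<0})\neq\emptyset$, and conclude with Proposition~\ref{Lema_StripBound}. The bookkeeping points you flag are exactly the ones the paper records in one line.
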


\begin{proof}
Let  $A, B \subseteq \sigma(f)$ as in \eqref{Eq::DefAB}. Since $(\mathcal{H}_{v,a}, \mathcal{H}_{v,b})$ is a pair of strict enclosing hyperplanes of $\sigma_+(f)$, by  \eqref{Eq:StrictEnclAB} we have:
\[b_0\big( f_{|A}^{-1}(\mathbb{R}_{<0})\big) = 1, \qquad b_0\big( f_{|B}^{-1}(\mathbb{R}_{<0})\big) = 1.\]
The assumptions of the theorem are equivalent to $\beta_1 \in \sigma_-(f_{|A}), \, \beta_2 \in \sigma_-(f_{|B})$ and $\Conv( \{ \beta_1, \beta_2 \}) \cap \Conv( \sigma_+(f_{|A}) \cup \sigma_+(f_{|B})) = \emptyset$. Thus, by Proposition \ref{Prop_Box}, $f^{-1}_{|A}(\mathbb{R}_{<0}) \cap f^{-1}_{|B}(\mathbb{R}_{<0}) \neq \emptyset$. Now, from Proposition \ref{Lema_StripBound} it follows that $b_0( f^{-1}(\mathbb{R}_{<0})) = 1$.
\end{proof}

\begin{ex}
\label{Ex_Box}
Consider the signomial $f =-x^{4} y^{4} + 10 \, x^{3} y^{3} - 10 \, x^{4} - 10 \, y^{4} + 7 \, x y + 5 \, x - 1$. Figure \ref{FIG3}(a) displays the exponent vectors of $f$. The pair $(\mathcal{H}_{v,3.5}, \, \mathcal{H}_{v,0.5})$, $v=(1,0)$ is a pair of strict enclosing hyperplanes of $\sigma_+(f)$, furthermore
\begin{align}
\label{Eq::Ex_Box}
 \Conv( \{ (0,4),(4,4) \} ) \cap \Conv(\sigma_+(f) ) = \emptyset.
 \end{align}
Thus, $f^{-1}(\mathbb{R}_{<0})$ is connected by Theorem \ref{Thm_Box}. Figure \ref{FIG3}(b),(c) show $f^{-1}(\mathbb{R}_{<0})$, $f_{|A}^{-1}(\mathbb{R}_{<0})$ and $f_{|B}^{-1}(\mathbb{R}_{<0})$, where $f_{|A} =-x^{4} y^{4} + 10 \, x^{3} y^{3} - 10 \, x^{4}  + 7 \, x y + 5 \, x  $ and $f_{|B}=10 \, x^{3} y^{3}  - 10 \, y^{4} + 7 \, x y + 5 \, x - 1$.
\end{ex}

\begin{figure}[t]
\centering
\begin{minipage}[h]{0.3\textwidth}
\centering
\includegraphics[scale=0.5]{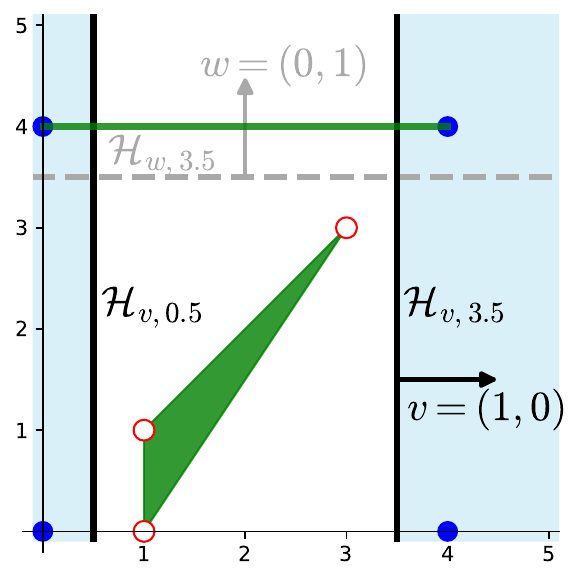}

{\small (a)}
\end{minipage}
\begin{minipage}[h]{0.3\textwidth}
\centering
\includegraphics[scale=0.5]{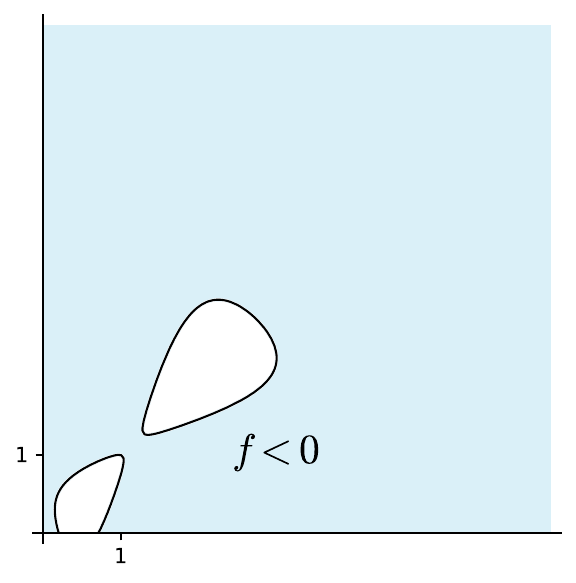}

{\small (b)}
\end{minipage}
\begin{minipage}[h]{0.3\textwidth}
\centering
\includegraphics[scale=0.5]{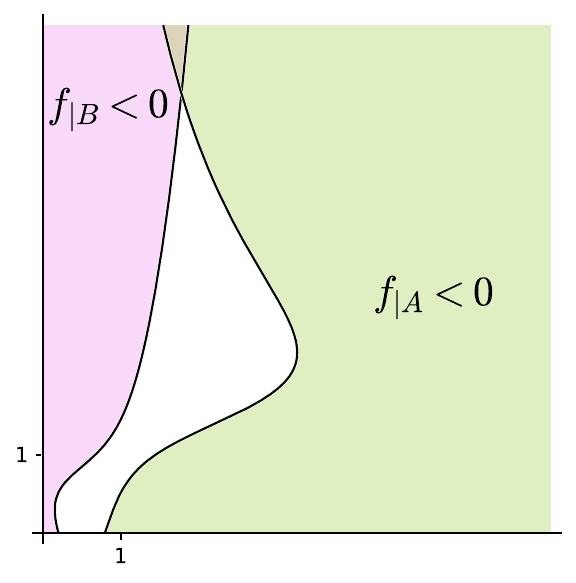}

{\small (c)}
\end{minipage}
\caption{{\small Illustration of Example \ref{Ex_Box} (a) Negative and positive exponent vectors of $f =-x^{4} y^{4} + 10 \, x^{3} y^{3} - 10 \, x^{4} - 10 \, y^{4} + 7 \, x y + 5 \, x - 1$, blue dots are negative, red circles are positive. The black solid lines are strict enclosing hyperplanes of $\sigma_+(f)$. The gray dashed line separates $\Conv((0,4),(4,4))$ from $\Conv(\sigma_+(f))$. (b) Negative connected component of $f$. (c) Negative connected component of $f_{|A} =-x^{4} y^{4} + 10 \, x^{3} y^{3} - 10 \, x^{4}  + 7 \, x y + 5 \, x  $ and $f_{|B}=10 \, x^{3} y^{3}  - 10 \, y^{4} + 7 \, x y + 5 \, x - 1$.}}\label{FIG3}
\end{figure}

By Theorem \ref{Thm::FirstDescartes}(iii), $f^{-1}(\mathbb{R}_{<0})$ is either empty or logarithmically convex, if the signomial $f$ has at most one negative coefficient. In particular, in that case $f$ has one negative connected component. A natural question to ask is what happens for signomials with at most one positive coefficient. For a univariate signomial $f$, it is easy to construct examples where $f$ has one positive coefficient and $f^{-1}(\mathbb{R}_{<0})$ is disconnected, take for example $f = -x^2 + 3x-1$. 

If $n \geq 2$ and the positive exponent vector lies in the interior of the Newton polytope, then $f^{-1}(\mathbb{R}_{<0})$ is homeomorphic to the complement of a bounded convex set  \cite[Corollary 3.5]{telek2023real} \cite[Theorem 3.4]{DescartesHypPlane}; therefore,  $f^{-1}(\mathbb{R}_{<0})$ is connected. This argument does not work if the positive exponent vector lies on the boundary of $\N(f)$, but the conclusion is still true. 

\begin{cor}
\label{Cor_OnePos}
Let $f\colon \mathbb{R}^{n}_{>0} \to \mathbb{R}, \quad x \mapsto \sum_{\mu \in \sigma(f)} c_{\mu}x^{\mu}$  be a signomial. If $\dim \N(f) \geq 2$ and $\# \sigma_+(f) = 1$, then $f^{-1}(\mathbb{R}_{<0} )$ is non-empty and connected.
\end{cor}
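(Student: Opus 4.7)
Let $\alpha$ denote the unique positive exponent vector of $f$. The plan is to split on whether $\alpha$ lies in the relative interior or on the relative boundary of $\N(f)$, after first reducing to the case where the Newton polytope is full-dimensional. The interior case is covered by the result quoted in the paragraph preceding the corollary, and for the boundary case I would produce a strict separating hyperplane of $\sigma(f)$ and invoke Theorem~\ref{Thm::FirstDescartes}(i).

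First I would reduce to $\dim \N(f) = n$. Set $d = \dim \N(f) \geq 2$. If $d < n$, choose a basis $v_1, \dots, v_n$ of $\mathbb{R}^n$ such that $v_1, \dots, v_d$ span the linear part of $\Aff(\sigma(f))$. The map $\phi : \mathbb{R}^n_{>0} \to \mathbb{R}^n_{>0}$, $x \mapsto (x^{v_1}, \dots, x^{v_n}) =: y$, is a diffeomorphism, and factoring $x^{\alpha}$ out of every monomial rewrites $f$ as
\[ f(x) = x^{\alpha} \cdot g(y_1, \dots, y_d), \]
where $g$ is a $d$-variate signomial with $\dim \N(g) = d$, exactly one positive coefficient, and all other coefficients negative. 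Since $x^{\alpha} > 0$, the set $f^{-1}(\mathbb{R}_{<0})$ is diffeomorphic to $g^{-1}(\mathbb{R}_{<0}) \times \mathbb{R}^{n-d}_{>0}$, so connectedness and non-emptiness of the two sets are equivalent. Hence I may assume that $\N(f)$ is full-dimensional in $\mathbb{R}^n$ with $n \geq 2$.

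Under this reduction, if $\alpha \in \inte \N(f)$, the conclusion is immediate from the cited result preceding the corollary: $f^{-1}(\mathbb{R}_{<0})$ is homeomorphic to the complement of a bounded convex subset of $\mathbb{R}^n$, which is non-empty and connected for $n \geq 2$. The remaining case is $\alpha \in \bd \N(f)$, where the actual work lies. Since $\N(f)$ is full-dimensional, there is a supporting hyperplane of $\N(f)$ at $\alpha$; after an orientation choice, write it as $\mathcal{H}_{v, a}$ with $a = v \cdot \alpha$ and $\N(f) \subseteq \mathcal{H}^{+}_{v, a}$. Then $\sigma_+(f) = \{\alpha\} \subseteq \mathcal{H}_{v, a} \subseteq \mathcal{H}^{-}_{v, a}$ and $\sigma_-(f) \subseteq \N(f) \subseteq \mathcal{H}^{+}_{v, a}$, so $\mathcal{H}_{v, a}$ is a separating hyperplane of $\sigma(f)$. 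Full-dimensionality of $\N(f)$ forbids $\N(f) \subseteq \mathcal{H}_{v, a}$, hence some $\mu \in \sigma(f)$ satisfies $v \cdot \mu > a$; as $\alpha$ already lies on $\mathcal{H}_{v, a}$, this $\mu$ must belong to $\sigma_-(f)$. Thus the hyperplane is strict, and Theorem~\ref{Thm::FirstDescartes}(i) yields that $f^{-1}(\mathbb{R}_{<0})$ is non-empty and contractible, hence connected.

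The main step requiring care is the reduction to a full-dimensional Newton polytope via the monomial change of variables; once this is in place, the boundary argument is a short convex-geometric exercise combined with Theorem~\ref{Thm::FirstDescartes}(i), and the interior case is immediate from the cited result.
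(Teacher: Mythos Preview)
Your proof is correct but follows a genuinely different route from the paper. The paper gives a unified argument via its new Theorem~\ref{Thm_Box}: since $\dim\N(f)\geq 2$, one can pick $\beta_1,\beta_2\in\sigma_-(f)$ not collinear with $\alpha$, take any hyperplane through $\alpha$ that puts $\beta_1,\beta_2$ on opposite open sides (so that $(\mathcal H_{v,a},\mathcal H_{v,a})$ is a strict enclosing pair), observe that $\Conv(\{\beta_1,\beta_2\})\cap\{\alpha\}=\emptyset$, and conclude directly from Theorem~\ref{Thm_Box}. No case split and no dimension reduction are needed.

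Your approach instead reduces to a full-dimensional Newton polytope by a monomial change of variables, then splits on whether $\alpha$ lies in the interior or on the boundary of $\N(f)$. The interior case quotes the complement-of-a-bounded-convex-set result mentioned just before the corollary, and the boundary case produces a strict separating hyperplane (any supporting hyperplane of $\N(f)$ at $\alpha$) and invokes Theorem~\ref{Thm::FirstDescartes}(i). The trade-off: your argument is more elementary in that it avoids the new enclosing-hyperplane machinery (Propositions~\ref{Prop_StripBound0}, \ref{Lema_StripBound}, Theorem~\ref{Thm_Box}) and relies only on results from the predecessor paper, but it pays for this with the change-of-variables reduction and a case distinction. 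The paper's proof, by contrast, showcases Theorem~\ref{Thm_Box} as a clean tool that handles both cases at once.
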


\begin{proof}
Write $\{ \alpha \} = \sigma_+(f)$. Since $\dim \N(f) \geq 2$, there exist $\beta_1,\beta_2 \in \sigma_-(f)$ such that $\beta_1,\beta_2$ and $\alpha$ do not lie on a line. In that case, 
\[ \Conv( \{ \beta_1,\beta_2 \}) \cap \Conv( \{ \alpha \}) = \emptyset.\]

Pick a hyperplane $\mathcal{H}_{v,a}$ that contains $\alpha$ such that $\beta_1,\beta_2$ lie in different open half-spaces determined by $\mathcal{H}_{v,a}$. Thus, $(\mathcal{H}_{v,a},\mathcal{H}_{v,a})$ is a pair of strict enclosing hyperplanes of $\sigma_+(f)$.

Using Theorem \ref{Thm_Box}, we conclude that $f^{-1}(\mathbb{R}_{<0} )$ is non-empty and connected.
\end{proof}

Corollary \ref{Cor_OnePos} shows that one can flip the signs in Theorem \ref{Thm::FirstDescartes}(iii) if $\dim \N(f) \geq 2$, i.e. if $f$ has one negative coefficient, then both $f^{-1}(\mathbb{R}_{<0})$ and $(-f)^{-1}(\mathbb{R}_{<0})$ are (possibly empty) connected sets. As discussed at the end of Section \ref{Sec::Background}, a signomial has at most one negative connected component if its positive and negative exponent vectors are separated by a simplex and its negative vertex cones \cite[Theorem 4.6]{DescartesHypPlane}. We proceed by recalling this statement and show that it is possible to flip the signs also in that case under some mild assumptions. 

First we recall the definition of the \emph{negative vertex cone} of a simplex. For an $n$-simplex $P \subseteq \mathbb{R}^{n}$ with vertices $\mu_0, \dots , \mu_n$, the negative vertex cone of $P$ at the vertex $\mu_k$ is defined as
\[ P^{-,k} := \mu_k + \Cone( \mu_k - \mu_0, \dots , \mu_k - \mu_n) .\]
Thus, $P^{-,k}$ is the cone with apex at $\mu_k$ which is generated by the edges pointing into $\mu_k$. For the union of the negative vertex cones $P^{-,0}, \dots ,P^{-,n}$, we write $P^-$.

\begin{thm} \cite[Theorem 4.6]{DescartesHypPlane}
\label{Thm::VertCones46}
Let $f\colon \mathbb{R}^{n}_{>0} \to \mathbb{R}, \, x \mapsto \sum_{\mu \in \sigma(f)} c_{\mu}x^{\mu}$  be a signomial. If there exists an $n$-simplex $P \subseteq \mathbb{R}^{n}$ such that 
\[ \sigma_{-}(f) \subseteq P, \quad \text{ and } \quad \sigma_{+}(f) \subseteq P^{-},\]
then $f^{-1}(\mathbb{R}_{<0} )$  is either empty or contractible.
\end{thm}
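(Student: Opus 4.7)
The plan is to reduce to a canonical simplex via a monomial change of variables and then to exhibit a deformation retract of $f^{-1}(\mathbb{R}_{<0})$ onto a point by combining the vertex-by-vertex strict-separation arguments provided by Theorem~\ref{Thm::FirstDescartes}(i).

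First, by applying the monomial substitution corresponding to the invertible linear map sending $e_k$ to $\mu_k - \mu_0$ for $k = 1, \ldots, n$, and multiplying $f$ by $x^{-\mu_0}$, we may assume without loss of generality that $P$ is the standard simplex with vertices $\mu_0 = 0$ and $\mu_k = e_k$ for $k \geq 1$. This composed transformation is a homeomorphism of $\mathbb{R}^n_{>0}$ that preserves $f^{-1}(\mathbb{R}_{<0})$ up to homeomorphism, and hence preserves contractibility. In this canonical setting, $P^{-,0} = -\mathbb{R}^n_{\geq 0}$ and $P^{-,k} = \{\mu \in \mathbb{R}^n : \mu_k \geq 1,\ \mu_j \leq 0 \text{ for } j \neq k\}$ for $k \geq 1$.

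Partition $\sigma_+(f) = \bigsqcup_{k=0}^n \sigma_{+,k}(f)$ with $\sigma_{+,k}(f) \subseteq P^{-,k}$, breaking ties on shared boundary points arbitrarily, and for each $k$ let $v_k$ denote the outward normal to the facet of $P$ opposite to $\mu_k$. A direct computation shows that, ordering the exponents of $f$ by $v_k \cdot \mu$, the coefficients follow the sign pattern $+,-,+$: positive exponents in $\sigma_{+,j}(f)$ for $j \neq k$ satisfy $v_k \cdot \mu \leq \min_{\mu \in P} v_k \cdot \mu$, the negative exponents lie in between, and positive exponents in $\sigma_{+,k}(f)$ satisfy $v_k \cdot \mu \geq \max_{\mu \in P} v_k \cdot \mu$. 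Hence, for every $x \in \mathbb{R}^n_{>0}$, the univariate signomial $t \mapsto f(t^{v_k} \ast x)$ has at most two sign changes in its coefficient sequence, and the restricted signomial $f_k := f_{|\sigma_-(f) \cup \sigma_{+,k}(f)}$ admits a strict separating hyperplane (the one through $\mu_k$ normal to $v_k$), so by Theorem~\ref{Thm::FirstDescartes}(i) each $f_k^{-1}(\mathbb{R}_{<0})$ is non-empty and contractible. Moreover, since $f \geq f_k$ pointwise, $f^{-1}(\mathbb{R}_{<0}) \subseteq \bigcap_k f_k^{-1}(\mathbb{R}_{<0})$.

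The main remaining task, and the technical heart of the argument, is to combine these per-vertex contractibility statements into a deformation retract of the full set $f^{-1}(\mathbb{R}_{<0})$ onto a single point. For any $x \in f^{-1}(\mathbb{R}_{<0})$, I would build a canonical continuous path to a fixed base point $x_0$ as a concatenation of monomial paths $t \mapsto t^{v_k} \ast x$, using Lemma~\ref{Lemma_UniSignchanges}: for each $k$, the set $\{t > 0 : f(t^{v_k} \ast x) < 0\}$ is either empty or contained in a single bounded open interval (by Descartes applied to the $+,-,+$ pattern), along which $x$ can be slid while remaining in $f^{-1}(\mathbb{R}_{<0})$. The main obstacle is coordinating these $n+1$ slides so that the composition depends continuously on $x$ and terminates at a common point; the cleanest phrasing is by induction on the number of non-empty pieces $\sigma_{+,k}(f)$, at each step absorbing one vertex cone via an argument analogous to the proof of Proposition~\ref{Prop_NonStrictSepHyp} and reducing to the one-vertex-cone case handled directly by Theorem~\ref{Thm::FirstDescartes}(i).
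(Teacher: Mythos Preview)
This theorem is not proved in the present paper; it is quoted from \cite[Theorem~4.6]{DescartesHypPlane} and used as a black box. So there is no in-paper proof to compare against.

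On its own merits, your proposal has a genuine gap at the decisive step. The reduction to the standard simplex is fine, and the observation that each restricted signomial $f_k = f_{|\sigma_-(f)\cup\sigma_{+,k}(f)}$ admits a strict separating hyperplane (hence $f_k^{-1}(\mathbb{R}_{<0})$ is contractible by Theorem~\ref{Thm::FirstDescartes}(i)) is correct. But the containment $f^{-1}(\mathbb{R}_{<0}) \subseteq \bigcap_k f_k^{-1}(\mathbb{R}_{<0})$ tells you nothing about the topology of $f^{-1}(\mathbb{R}_{<0})$: an intersection of contractible open sets need not be contractible, and neither need a subset of one.

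Your plan to ``coordinate the $n+1$ slides'' is precisely the content of the theorem, and you have not carried it out. The induction you sketch cannot be driven by an argument ``analogous to the proof of Proposition~\ref{Prop_NonStrictSepHyp}'': that proposition yields only a bijection on $\pi_0$, not a homotopy equivalence, so it cannot propagate contractibility through the inductive step. Concretely, after deleting one vertex cone $\sigma_{+,k}$ you would need to show that $f^{-1}(\mathbb{R}_{<0})$ deformation retracts onto $g^{-1}(\mathbb{R}_{<0})$ for the reduced signomial $g$, and this requires controlling the endpoints of the intervals $\{t>0 : f(t^{v_k}\ast x)<0\}$ continuously in $x$. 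Those endpoints are implicitly defined roots that can collide or disappear as $x$ varies (for instance when the leading block $\sum_{\mu\in\sigma_{+,k}}c_\mu x^\mu t^{v_k\cdot\mu}$ degenerates), so a naive ``slide to the boundary'' map is not obviously continuous. Until this analysis is supplied, the argument establishes neither contractibility nor even connectedness of $f^{-1}(\mathbb{R}_{<0})$.
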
  

As another consequence of Theorem \ref{Thm_Box}, we have the following result.

\begin{cor}
\label{Lemma_SimplexVertCones}
Let $n \geq 2$ and let $f\colon \mathbb{R}^{n}_{>0} \to \mathbb{R}, \, x \mapsto \sum_{\mu \in \sigma(f)} c_{\mu}x^{\mu}$ be a signomial. Assume that there exists an $n$-simplex $P \subseteq \mathbb{R}^{n}$ such that 
\[ \sigma_{+}(f) \subseteq P \quad \text{ and } \quad \sigma_{-}(f) \subseteq P^{-}. \]
If $\sigma_-(f) \cap \inte(P^-) \neq \emptyset$, then $f^{-1}(\mathbb{R}_{<0} )$ is non-empty and connected.
\end{cor}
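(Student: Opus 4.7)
The plan is to apply Theorem \ref{Thm_Box}. By hypothesis there exists $\beta_1 \in \sigma_-(f)$ lying in the interior of some negative vertex cone $P^{-,k_1}$ of $P$. Write $F$ for the facet of $P$ opposite $\mu_{k_1}$.

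If all of $\sigma_-(f)$ is contained in $P^{-,k_1}$, I choose $v \in \mathbb{R}^n$ with $v \cdot \mu_{k_1} > v \cdot \mu_j$ for every $j \neq k_1$ and $v \cdot \mu_j$ equal to a common value $b$; this is possible because the vertices $\mu_j$ with $j \neq k_1$ span the affine hyperplane containing $F$. Setting $a = v \cdot \mu_{k_1}$, the hyperplane $\mathcal{H}_{v,a}$ is a strict separating hyperplane of $\sigma(f)$: $\sigma_+(f) \subseteq P \subseteq \mathcal{H}^{-}_{v,a}$, $\sigma_-(f) \subseteq P^{-,k_1} \subseteq \mathcal{H}^{+}_{v,a}$, and $\beta_1 \in \inte(P^{-,k_1})$ yields $v \cdot \beta_1 > a$. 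Theorem \ref{Thm::FirstDescartes}(i) then concludes.

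Otherwise, I pick $\beta_2 \in \sigma_-(f) \cap P^{-,k_2}$ for some $k_2 \neq k_1$ and keep the same $v, a, b$. A direct computation from the parametrization $\mu_k + \sum_{i \neq k} t_i(\mu_k - \mu_i)$ of points in $P^{-,k}$ shows $P \subseteq \mathcal{H}^{-}_{v,a} \cap \mathcal{H}^{+}_{v,b}$, $P^{-,k_1} \subseteq \mathcal{H}^{+}_{v,a}$, and $P^{-,j} \subseteq \mathcal{H}^{-}_{v,b}$ for every $j \neq k_1$. Thus $(\mathcal{H}_{v,a}, \mathcal{H}_{v,b})$ is an enclosing pair of $\sigma_+(f)$, strict on the upper side through $\beta_1$. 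To invoke Theorem \ref{Thm_Box}, I then verify the convex-hull disjointness $\Conv(\{\beta_1, \beta_2\}) \cap \Conv(\sigma_+(f)) = \emptyset$: writing $\beta_1, \beta_2$ in barycentric coordinates with respect to the vertices of $P$, $\beta_1$ has strictly negative coordinate at each $\mu_i$ with $i \neq k_1$, and $\beta_2$ has non-positive coordinate at each $\mu_i$ with $i \neq k_2$. A convex combination $(1-\lambda)\beta_1 + \lambda \beta_2$ with $\lambda \in [0,1)$ therefore has strictly negative coordinate at $\mu_i$ for any $i \neq k_1, k_2$, and since $n \geq 2$ such an $i$ exists, so the combination is not in $P$. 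At $\lambda = 1$ either the same argument applies (when $\beta_2 \neq \mu_{k_2}$), or $\beta_2 = \mu_{k_2}$ and $\mu_{k_2} \notin \Conv(\sigma_+(f))$ by the extreme-point argument together with $\sigma_+(f) \cap \sigma_-(f) = \emptyset$.

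The main obstacle is securing strictness of the enclosing pair on the lower side, which requires some $\beta \in \sigma_-(f) \cap \bigcup_{j \neq k_1} P^{-,j}$ to satisfy $v \cdot \beta < b$. This is automatic unless all such $\beta$ lie on the facet hyperplane $\mathcal{H}_{v,b}$, the degenerate situation. In that case, I would perturb $v$ to $v' = v + \epsilon w$ for small $\epsilon > 0$, with $w$ chosen so that $w \cdot \beta_2 < w \cdot \mu$ for every $\mu \in \sigma_+(f) \cap F$; such $w$ exists because $\beta_2 \notin \Conv(\sigma_+(f) \cap F)$ (either $\beta_2$ lies outside $F$ in $\mathcal{H}_{v,b}$, or $\beta_2 = \mu_{k_2}$ is an extreme point of $F$ not in $\sigma_+(f)$). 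For $\epsilon$ sufficiently small, the perturbed pair $(\mathcal{H}_{v',a'}, \mathcal{H}_{v',b'})$ with $a' = \max_{\mu \in \sigma_+(f)} v' \cdot \mu$ and $b' = \min_{\mu \in \sigma_+(f)} v' \cdot \mu$ remains an enclosing pair preserving all earlier properties and is now strict on both sides, so Theorem \ref{Thm_Box} applies and delivers the conclusion.
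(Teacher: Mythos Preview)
Your overall plan—reducing to Theorem~\ref{Thm_Box}—is the same as the paper's, and your barycentric argument for $\Conv(\{\beta_1,\beta_2\})\cap\Conv(\sigma_+(f))=\emptyset$ is correct. The gap is in the perturbation step for the degenerate case.

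When you perturb $v$ to $v'=v+\epsilon w$, you only arrange $w\cdot\beta_2<w\cdot\mu$ for $\mu\in\sigma_+(f)\cap F$. But for $(\mathcal{H}_{v',a'},\mathcal{H}_{v',b'})$ to remain an enclosing pair, \emph{every} $\beta\in\sigma_-(f)\cap\bigcup_{j\neq k_1}P^{-,j}$ must satisfy $v'\cdot\beta\leq b'$. In the degenerate situation all such $\beta$ lie on $\mathcal{H}_{v,b}$, so this forces $w\cdot\beta\leq\min_{\mu\in\sigma_+(f)\cap F}w\cdot\mu$ for each of them simultaneously. A single $w$ achieving this need not exist: the intersection of $P^{-,j}$ with the affine hull of $F$ is precisely the negative vertex cone of the $(n-1)$-simplex $F$ at $\mu_j$, and when $\sigma_-(f)$ meets several of these cones the points can flank $\sigma_+(f)\cap F$ on opposite sides. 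Concretely, with $n=2$, $P=\Conv(\mu_0,\mu_1,\mu_2)$, $k_1=0$, $\beta_3=2\mu_1-\mu_2\in P^{-,1}$, $\beta_4=2\mu_2-\mu_1\in P^{-,2}$, and $\tfrac12(\mu_1+\mu_2)\in\sigma_+(f)$, the three points are collinear with the positive one in the middle; any perturbation that pushes $\beta_3$ strictly below $b'$ pushes $\beta_4$ into the open strip.

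The paper avoids this entirely by choosing a different direction. Rather than the normal pointing toward $\mu_{k_1}$, it takes a facet normal $v_j$ with $j\neq k_1$ such that $\sigma_-(f)\cap P^{-,j}\neq\emptyset$ (if no such $j$ exists, that same $v_j$ gives a strict separating hyperplane). Strictness on the upper side of $\mathcal{H}_{v_j,a_j}$ then comes directly from $\beta_1\in\inte(P^{-,k_1})$, which forces $v_j\cdot\beta_1>a_j$; on the lower side, the simplex geometry gives the strict inequality $v_j\cdot\alpha>v_j\cdot\beta'$ for all $\alpha\in\sigma_+(f)$ and all $\beta'\in\sigma_-(f)\cap P^{-,j}$, so one can insert a level $b_j$ strictly between them with no perturbation needed. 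The convex-hull disjointness is then read off from a third facet normal $v_\ell$, $\ell\notin\{k_1,j\}$.
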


\begin{proof}
Denote $\mu_0, \dots , \mu_n$ the vertices of $P$ and write $P$ as
\begin{align}
\label{Eq_Psimplex}
P = \bigcap_{j=0}^{n}\mathcal{H}^{-}_{v_{j},a_j}
\end{align}
for a choice of normal vectors $v_0, \dots , v_n \in \mathbb{R}^{n}$ and scalars $a_0, \dots, a_n \in \mathbb{R}$ such that $\mu_k$ equals the intersection point of the hyperplanes $\mathcal{H}_{v_j,a_j}, j \in \{0,\dots, n\} \setminus \{k \}$. By \cite[Proposition 4.3]{DescartesHypPlane}, each negative vertex cone has the form: 
\[P^{-,k} = \bigcap_{j=0,j\neq k}^n\mathcal{H}^{+}_{v_{j},a_j}  \qquad k=0,\dots,n.\]
Note that if $\beta \in P^{-,k}$, then $\beta$ is contained in $\mathcal{H}^{-,\circ}_{v_{k},a_k}$. Since $\sigma_+(f) \subseteq P$  and $\sigma_-(f) \subseteq P^-$, we have
\begin{align}
\label{Eq-AlphainP}
v_k \cdot \beta \geq a_k \geq v_k \cdot \alpha  \geq v \cdot \mu_k \geq v_k  \cdot\beta', \quad \text{ and } v_k \cdot \alpha > v_k \cdot \beta'
\end{align}
for all $\beta \in  \sigma_-(f) \setminus P^{-,k}, \, \alpha \in \sigma_+(f), \, \beta' \in  \sigma_-(f) \cap P^{-,k}$,  $k=0,\dots,n$.

Let $\beta_0 \in \sigma_-(f) \cap \inte(P^-)$ and assume without loss of generality that $\beta_0 \in \inte(P^{-,0})$, so
\begin{align}
\label{Eq_Beta0inP0}
a_0 > v_0 \cdot \beta_0 \qquad \text{and} \qquad v_j \cdot \beta_0 > a_j, \quad \text{for } j = 1, \dots , n.
\end{align}

If $ \sigma_-(f) \cap P^{-,1} = \emptyset$, then $\sigma_-(f) \subseteq  \mathcal{H}_{v_1,a_1}^+$. Since $ \beta_0 \in \mathcal{H}_{v_1,a_1}^{+,\circ}$ , $\mathcal{H}_{v_1,a_1}$ is a strict separating hyperplane of $\sigma(f)$, which implies that $f^{-1}(\mathbb{R}_{<0} )$ is non-empty and connected by Theorem \ref{Thm::FirstDescartes}(i).

Assume now that $ \sigma_-(f) \cap P^{-,1} \neq \emptyset$, let $\beta_1 \in\sigma_-(f) \cap P^{-,1}$ and $b_1 \in \mathbb{R}$ such that
\[ v_1 \cdot \alpha > b_1 > v_1 \cdot \beta_1 \qquad \text{for all } \alpha \in \sigma_+(f).\]
From \eqref{Eq-AlphainP}, it follows that $(\mathcal{H}_{v_1,a_1}, \mathcal{H}_{v_1,b_1})$ is a pair of enclosing hyperplanes of $\sigma_+(f)$. By \eqref{Eq_Beta0inP0}, we have $\beta_0 \in \mathcal{H}^{+,\circ}_{v_1,a_1}$. Thus, $(\mathcal{H}_{v_1,a_1}, \mathcal{H}_{v_1,b_1})$ is a pair of strict enclosing hyperplanes.

Every element $\mu \in \Conv(\beta_0 , \beta_1) \setminus \{ \beta_1 \}$ has the form $\mu = t \beta_0 + (1-t) \beta_1$ for some $t \in (0,1]$. By \eqref{Eq-AlphainP} and \eqref{Eq_Beta0inP0}, we have
\[ v_2 \cdot \mu = t (v_2 \cdot \beta_0) + (1- t) (v_2 \cdot \beta_1) > a_2.\] 
Thus, $\Conv(\beta_0 , \beta_1) \setminus \{ \beta_1 \} \subseteq \mathcal{H}_{v_2,a_2}^{+,\circ}$, which implies that 
\[ \Conv(\{ \beta_0,\beta_1 \} ) \cap \Conv(\sigma_+(f)) = \emptyset. \]
From Theorem \ref{Thm_Box}, it follows that $f^{-1}(\mathbb{R}_{<0} )$ is non-empty and connected.
\end{proof}

\begin{ex} 
\label{Ex_Simplex} 
\textbf{(a)} Consider the signomial
\[ f =-x^{5} y^{\frac{7}{3}} - x^{5} y^{2} + x^{2} y^{2} - x y^{3} - y^{4} + 2 \, x^{2} y^{\frac{4}{3}} + 2 \, x y^{2} - x y.\]
The simplex $P=\operatorname{Conv}((1,1),(4,2),(1,3))$ contains $\sigma_+(f)$ and the negative exponent vectors are contained in the union of the negative vertex cones $P^-$. By Theorem \ref{Thm::VertCones46}, we have that $(-f)^{-1}(\mathbb{R}_{<0})$ is connected. From Corollary \ref{Lemma_SimplexVertCones}, it follows that $f^{-1}(\mathbb{R}_{<0})$ is connected as well. The exponent vectors of $f$, the simplex $P$ and its negative vertex cones are depicted in Figure~\ref{FIG4}(a),(b).

To write $P$ as an intersection of half-spaces as in \eqref{Eq_Psimplex} in the proof of Corollary \ref{Lemma_SimplexVertCones}, one can choose $v_0 = (-1,0), \, v_1 = (0.5,1.5), \, v_2 = (0.5,-1.5)$. With this choice we have:
\[ P = \mathcal{H}_{v_0,-1}^- \cap \mathcal{H}_{v_1,5}^- \cap \mathcal{H}_{v_2,-1}^-. \]

\medskip

\textbf{(b)} The following example shows that the assumption $\sigma_-(f) \cap \inte(P^-) \neq \emptyset$ in Corollary \ref{Lemma_SimplexVertCones} is necessary. If we remove the exponent vectors $(0,4),(5,2) \in  \relint(P^-)$ from the support of $f$, the signomial
\[ g =-x^{5} y^{\frac{7}{3}} + x^{2} y^{2} - x y^{3} + 2 \, x^{2} y^{\frac{4}{3}} + 2 \, x y^{2} - x y\]
satisfies that $\sigma_+(g) \subseteq P$ and $\sigma_-(g) \subseteq P^-$; however, $g^{-1}(\mathbb{R}_{<0})$ has two connected components. Figure \ref{FIG4}(c),(d) displays $\sigma(g)$ and $g^{-1}(\mathbb{R}_{<0})$.
\end{ex}

\begin{figure}[t]
\centering
\begin{minipage}[h]{0.45\textwidth}
\centering
\includegraphics[scale=0.5]{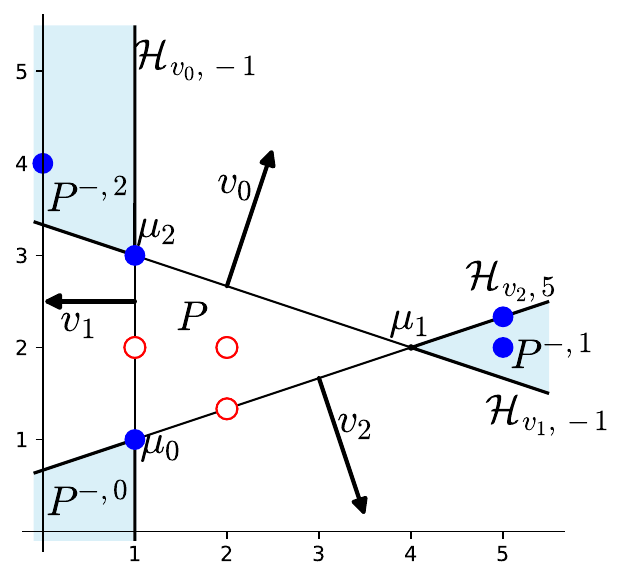}

{\small (a)}
\end{minipage}
\begin{minipage}[h]{0.45\textwidth}
\centering
\includegraphics[scale=0.5]{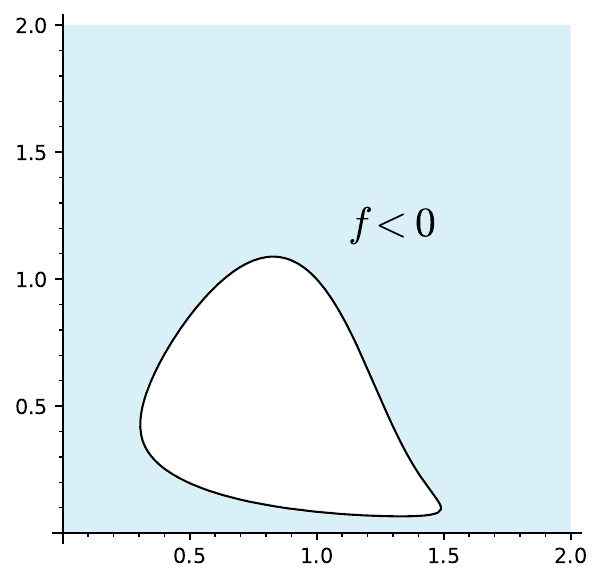}

{\small (b)}
\end{minipage}

\begin{minipage}[h]{0.45\textwidth}
\centering
\includegraphics[scale=0.5]{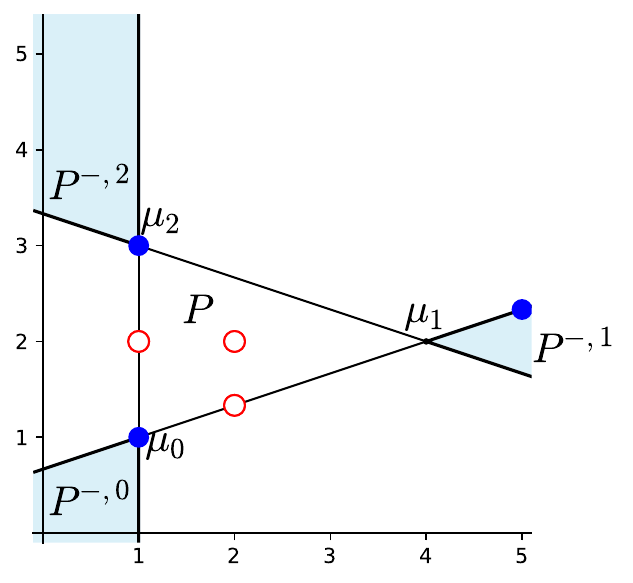}

{\small (c)}
\end{minipage}
\begin{minipage}[h]{0.45\textwidth}
\centering
\includegraphics[scale=0.5]{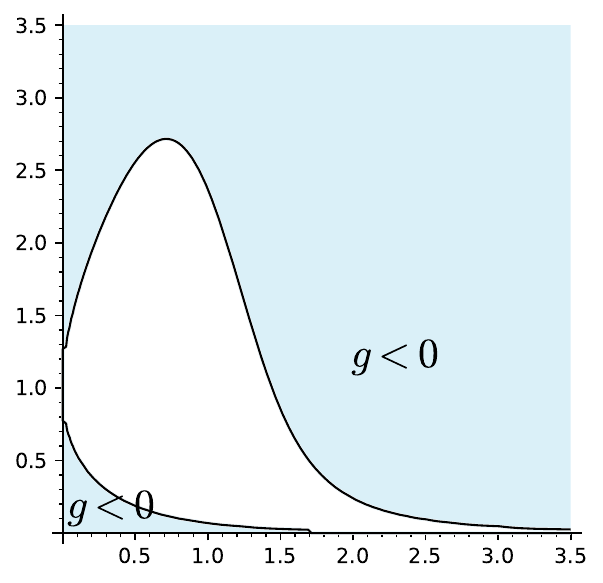}

{\small (d)}
\end{minipage}
\caption{{\small Illustration of Example \ref{Ex_Simplex} (a) A simplex $P$, its negative vertex cones, and the support of $f = -x^{5} y^{\frac{7}{3}} - x^{5} y^{2} + x^{2} y^{2} - x y^{3} - y^{4} + 2 \, x^{2} y^{\frac{4}{3}} + 2 \, x y^{2} - x y$. (c) The support of $g=-x^{5} y^{\frac{7}{3}} + x^{2} y^{2} - x y^{3} + 2 \, x^{2} y^{\frac{4}{3}} + 2 \, x y^{2} - x y$.}}\label{FIG4}
\end{figure}

\section{Reduction to faces of the Newton polytope}
\label{Sec_RedFaces}

\subsection{Negative and parallel faces}
\label{Section::NegAndParaFaces}

 In this section, we present two criteria to reduce the problem of finding the number of negative connected components of a signomial to the same problem for a signomial in less variables and monomials. The approach is based on how the exponent vectors of the signomial lie on the faces of the Newton polytope. A \emph{face} of the Newton polytope of a signomial $f$ is a set of the form
\[ \N(f)_v := \big\{ \omega \in \N(f) \mid v \cdot  \omega = \max_{\mu \in \N(f)} v \cdot \mu \big\}\]
for some $v \in \mathbb{R}^{n}$. The vector $v$ is called \emph{outer normal vector} of the face $\N(f)_v$. A polytope has finitely many faces \cite[Theorem 3.46]{JoswigTheobald_book}.  For a fixed face $F \subseteq \N(f)$, the set of vectors $v \in \mathbb{R}^{n}$ such that $\N(f)_v = F$ is called the \emph{outer normal cone} of $F$. For more details about polytopes and their faces, we refer to the books \cite{Ziegler_book, BasicsOnPolytopes, JoswigTheobald_book}. To compute faces and outer normal cones of polytopes, one can use e.g. \texttt{Polymake} \cite{polymake} or \texttt{SageMath} \cite{sagemath}.

\begin{thm}
\label{Thm_NegFace}
Let $f\colon \mathbb{R}^{n}_{>0} \to \mathbb{R}, \, x \mapsto \sum_{\mu \in \sigma(f)} c_{\mu}x^{\mu}$ be a signomial.
If there exists a face $F \subseteq \N(f)$ such that $\sigma_{-}(f) \subseteq F$, then
\[b_0\big(f^{-1}(\mathbb{R}_{<0})\big) = b_0\big(f_{|F}^{-1}(\mathbb{R}_{<0})\big).\]
\end{thm}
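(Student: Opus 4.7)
The plan is to show that this theorem is a direct consequence of Proposition~\ref{Prop_NonStrictSepHyp} applied to an appropriately chosen separating hyperplane and subset $R$. Since $F$ is a face of $\N(f)$, there is an outer normal vector $v \in \mathbb{R}^n$ such that, setting $a := \max_{\mu \in \N(f)} v \cdot \mu$, one has $F = \N(f) \cap \mathcal{H}_{v,a}$ and $\N(f) \subseteq \mathcal{H}_{v,a}^-$. In particular, $\sigma(f) \subseteq \mathcal{H}_{v,a}^-$, so $\sigma_+(f) \subseteq \mathcal{H}_{v,a}^-$; and by hypothesis $\sigma_-(f) \subseteq F \subseteq \mathcal{H}_{v,a} \subseteq \mathcal{H}_{v,a}^+$. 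Hence $\mathcal{H}_{v,a}$ is a (possibly non-strict) separating hyperplane of $\sigma(f)$.

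Next I would apply Proposition~\ref{Prop_NonStrictSepHyp} with the choice $R := \sigma(f) \cap F$. The two hypotheses to verify are $\sigma_-(f) \subseteq R$ and $\mathcal{H}_{v,a} \cap \sigma_+(f) \subseteq R$. The first follows immediately from $\sigma_-(f) \subseteq F$ combined with $\sigma_-(f) \subseteq \sigma(f)$. For the second, any $\alpha \in \sigma_+(f) \cap \mathcal{H}_{v,a}$ lies in $\N(f) \cap \mathcal{H}_{v,a} = F$, and hence in $\sigma(f) \cap F = R$. By definition of restriction, $f_{|R}$ coincides with $f_{|F}$, so Proposition~\ref{Prop_NonStrictSepHyp}(ii) yields
\[ b_0\bigl(f^{-1}(\mathbb{R}_{<0})\bigr) = b_0\bigl(f_{|R}^{-1}(\mathbb{R}_{<0})\bigr) = b_0\bigl(f_{|F}^{-1}(\mathbb{R}_{<0})\bigr), \]
which is the desired equality.

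There is no real obstacle in this proof: all of the analytic content, i.e.\ the construction of the bijection between the negative connected components of $f$ and those of $f_{|R}$ via pushing paths along the one-parameter family $t \mapsto t^v \ast x$, is already packaged inside Proposition~\ref{Prop_NonStrictSepHyp}. The only thing to check with care is the polyhedral fact that a face $F$ with outer normal $v$ equals $\N(f) \cap \mathcal{H}_{v,a}$ for $a = \max_{\mu \in \N(f)} v \cdot \mu$, which ensures that the inclusions relating $\sigma_\pm(f)$, $F$, and $\mathcal{H}_{v,a}$ line up to make $R = \sigma(f) \cap F$ a valid choice in Proposition~\ref{Prop_NonStrictSepHyp}.
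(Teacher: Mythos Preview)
Your proof is correct and follows exactly the same approach as the paper: apply Proposition~\ref{Prop_NonStrictSepHyp} with $R = \sigma(f) \cap F$, using a supporting hyperplane of the face $F$ as the (non-strict) separating hyperplane. The paper's proof is the one-line version of what you wrote; your expansion of the polyhedral details (why $\mathcal{H}_{v,a}$ is separating and why $R$ satisfies the two inclusions) is accurate and fills in precisely what the paper leaves implicit.
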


\begin{proof}
It follows directly from Proposition \ref{Prop_NonStrictSepHyp} with $R = \sigma(f) \cap F$.
\end{proof}

\begin{ex}
\label{Ex_NegFace}
Consider the signomial
\[f_{|B}(x,y) = 50  x^{2} y^{3} + x y^{3} + y^{4} - 9.5  y^{3} + 51  x^{2} + 30.5  y^{2} - 37  y + 12\]
from Example \ref{Ex::ExRunning} whose Newton polytope is shown in Figure \ref{FIG1}(c). 

The face $F = \Conv\big( (0,0),(0,4)\big) \subseteq \N(f_{|B})$ contains all the negative exponent vectors of $f_{|B}$. Since $f_{|F}$ is univariate, it is easy to conclude that $f_{|F}^{-1}(\mathbb{R}_{<0})$ has two connected components. By Theorem \ref{Thm_NegFace}, $f_{|B}^{-1}(\mathbb{R}_{<0})$ has also two connected components.
\end{ex}

\begin{figure}[t]
\centering
\centering
\includegraphics[scale=0.5]{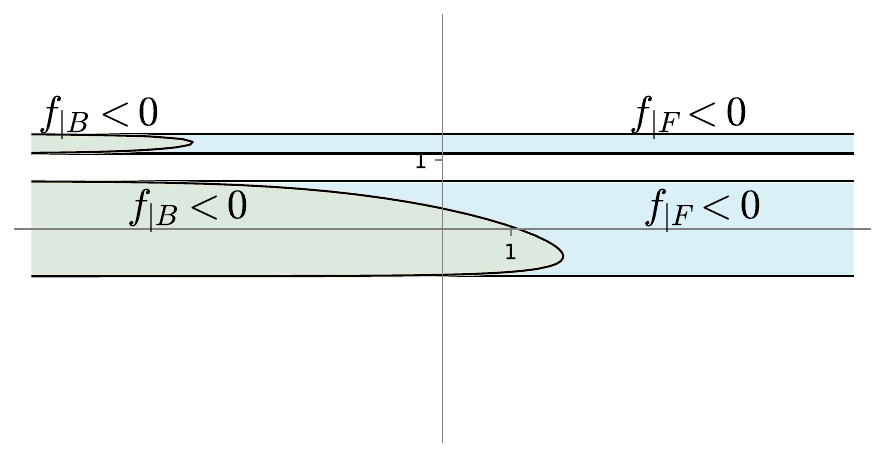}
\caption{{\small Images of the negative connected components of $f_{|B}$ and $f_{|F}$ from Example \ref{Ex_NegFace} under the coordinate-wise natural logarithm map. }}\label{FIG6}
\end{figure}

The next proposition refines the bound from Proposition \ref{Prop_StripBound0} in the special case where the pair of enclosing hyperplanes $(\mathcal{H}_{v,a},\mathcal{H}_{v,b})$ of $\sigma_+(f)$ is non-strict and all the exponent vectors of $f$ lie on the union of the two hyperplanes. In this case, the hyperplanes $\mathcal{H}_{v,a},\mathcal{H}_{v,b}$ cut out two parallel faces of the Newton polytope of $f$, i.e.:
\[ \N(f)_v = \N(f) \cap \mathcal{H}_{v,a}, \quad \N(f)_{-v} = \N(f) \cap \mathcal{H}_{v,b}.\]

Similarly to \eqref{Eq::GraphABVertex}, we define a bipartite graph whose set of vertices and edges are
\begin{equation}
\label{Eq::GraphvVertex}
\begin{aligned}
\mathcal{B}_v &:= \mathcal{B}_0^-(f_{|\N(f)_v } ) \sqcup \mathcal{B}_0^-(f_{|\N(f)_{-v}}), \\
\mathcal{E}_v &:= \{(U,V) \mid U \in \mathcal{B}_0^-(f_{|\N(f)_v } ), V \in \mathcal{B}_0^-(f_{|\N(f)_{-v}}) \colon U \cap V \neq \emptyset \}.
\end{aligned}
\end{equation}

\begin{prop}
\label{Thm_ParallelFaces}
Let $f\colon \mathbb{R}^{n}_{>0} \to \mathbb{R}, \, x \mapsto \sum_{\mu \in \sigma(f)} c_{\mu}x^{\mu}$ be a signomial. Assume that there exists  $v \in \mathbb{R}^{n}$ such that $\sigma(f) \subseteq \N(f)_v \cup \N(f)_{-v}$. Then
\[ b_0\big( f^{-1} (\mathbb{R}_{<0}) \big) = D \leq b_0\big( f_{|N(f)_v}^{-1} (\mathbb{R}_{<0}) \big) + b_0\big( f_{|N(f)_{-v}}^{-1} (\mathbb{R}_{<0}) \big),\]
where $D$ denotes the number of the connected components of the graph $(\mathcal{B}_v,\mathcal{E}_v)$ from \eqref{Eq::GraphvVertex}.
\end{prop}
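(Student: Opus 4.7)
The inequality $D \leq b_0\big( f_{|\N(f)_v}^{-1} (\mathbb{R}_{<0}) \big) + b_0\big( f_{|\N(f)_{-v}}^{-1} (\mathbb{R}_{<0}) \big)$ is immediate from graph theory (a graph has at most as many connected components as vertices), so the heart of the statement is the equality $b_0(f^{-1}(\mathbb{R}_{<0})) = D$. My plan is to exploit the scaling symmetry in the direction $v$. Let $a := \max_{\mu \in \sigma(f)} v \cdot \mu$ and $b := \min_{\mu \in \sigma(f)} v \cdot \mu$; I assume $a > b$ since the degenerate case $a=b$ forces $\N(f)_v = \N(f)_{-v}$ and the claim becomes trivial. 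Write $g_+ := f_{|\N(f)_v}$, $g_- := f_{|\N(f)_{-v}}$, and $U_\pm := g_\pm^{-1}(\mathbb{R}_{<0})$. The hypothesis $\sigma(f) \subseteq \N(f)_v \cup \N(f)_{-v}$ means every exponent $\mu \in \sigma(f)$ satisfies $v \cdot \mu \in \{a,b\}$, giving the identity
\begin{equation*}
f(t^v \ast x) = t^a g_+(x) + t^b g_-(x), \quad g_+(t^v \ast x) = t^a g_+(x), \quad g_-(t^v \ast x) = t^b g_-(x),
\end{equation*}
valid for every $x \in \mathbb{R}_{>0}^n$ and $t > 0$. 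In particular $U_+$ and $U_-$ are invariant under the free $\mathbb{R}_{>0}$-action $\Phi_t(x) := t^v \ast x$.

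I will first prove $b_0(f^{-1}(\mathbb{R}_{<0})) = b_0(U_+ \cup U_-)$. The inclusion $f^{-1}(\mathbb{R}_{<0}) \subseteq U_+ \cup U_-$ follows because $f(x) < 0$ forces $g_+(x) < 0$ or $g_-(x) < 0$. A case-by-case sign analysis of the binomial identity above shows that for every $x \in U_+ \cup U_-$ the set $T_x := \{\, t > 0 \mid f(t^v \ast x) < 0\,\}$ is a non-empty open \emph{interval} in $(0,\infty)$: it is a ray if $g_+(x), g_-(x) \leq 0$ with at least one strictly negative, and a bounded interval if the signs of $g_\pm(x)$ disagree. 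Since every $\Phi$-orbit through $U_+ \cup U_-$ therefore meets $f^{-1}(\mathbb{R}_{<0})$, each connected component of $U_+ \cup U_-$ contains a point of $f^{-1}(\mathbb{R}_{<0})$, and the inclusion induces a surjection on connected components.

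The main obstacle is injectivity: I must show that any two points $x, y \in f^{-1}(\mathbb{R}_{<0})$ joined by a path in $U_+ \cup U_-$ are also joined by a path inside $f^{-1}(\mathbb{R}_{<0})$. Given such a path $\gamma \colon [0,1] \to U_+ \cup U_-$, I will construct a continuous $\phi \colon [0,1] \to (0,\infty)$ with $\phi(s) \in T_{\gamma(s)}$ for all $s$ and $\phi(0) = \phi(1) = 1$; then $s \mapsto \phi(s)^v \ast \gamma(s)$ is a path in $f^{-1}(\mathbb{R}_{<0})$ connecting $x$ to $y$. To build $\phi$, observe that $W := \{\,(s,t) \in [0,1] \times (0,\infty) \mid f(t^v \ast \gamma(s)) < 0\,\}$ is open and surjects onto $[0,1]$. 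Cover $[0,1]$ by finitely many open sets $N_i$ on each of which some fixed $t_i$ belongs to $T_{\gamma(s)}$, and arrange $t_i = 1$ on neighborhoods of $0$ and $1$, which is possible because $1 \in T_x \cap T_y$. Set $\phi(s) := \sum_i \rho_i(s) t_i$ with $\{\rho_i\}$ a partition of unity subordinate to $\{N_i\}$. Convexity of the fibers $T_{\gamma(s)}$ then guarantees $\phi(s) \in T_{\gamma(s)}$ for every $s$; this is the key technical point, because without the interval structure of the fibers the partition-of-unity interpolation would fail.

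It remains to identify $b_0(U_+ \cup U_-)$ with $D$. Since $U_+$ and $U_-$ are open in the locally path-connected space $\mathbb{R}_{>0}^n$, the connected components of $U_+ \cup U_-$ are in bijection with the equivalence classes on $\mathcal{B}_0^-(g_+) \sqcup \mathcal{B}_0^-(g_-)$ generated by the relation $U \sim V$ whenever $U \cap V \neq \emptyset$; these classes are exactly the connected components of the bipartite graph $(\mathcal{B}_v, \mathcal{E}_v)$. Chaining the equalities gives $b_0(f^{-1}(\mathbb{R}_{<0})) = b_0(U_+ \cup U_-) = D$, completing the proof.
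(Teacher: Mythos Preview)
Your proof is correct and follows essentially the same strategy as the paper's: both exploit that along each orbit of the scaling action $t \mapsto t^v \ast x$ the negativity set of $f$ is an interval, so that $f^{-1}(\mathbb{R}_{<0})$ and $U_+ \cup U_-$ have the same connected components. The paper first makes an affine change of coordinates so that $v = e_n$ and $f(x) = f_1(\tilde{x}) + x_n f_2(\tilde{x})$, then projects to $\mathbb{R}^{n-1}_{>0}$ and argues by a case split on the signs of $f_1, f_2$ that the preimage of each connected component of $f_1^{-1}(\mathbb{R}_{<0}) \cup f_2^{-1}(\mathbb{R}_{<0})$ is connected. You instead work intrinsically with the inclusion $f^{-1}(\mathbb{R}_{<0}) \hookrightarrow U_+ \cup U_-$ and lift paths via a partition-of-unity selection, using convexity of the fibers $T_x$. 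The two arguments are equivalent; yours avoids the coordinate change at the cost of a slightly more abstract path-lifting step.

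One minor imprecision: your description of the shape of $T_x$ is not quite accurate. When the signs of $g_+(x)$ and $g_-(x)$ disagree, $T_x$ is $(\tau,\infty)$ if $g_+(x)<0<g_-(x)$ and $(0,\tau)$ if $g_-(x)<0<g_+(x)$, so it is a half-line in both cases rather than ``a bounded interval''. This does not affect the argument, since the only property you use is that each $T_x$ is a non-empty open interval, which is true in every case.
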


\begin{proof}

After an affine change of coordinates (see e.g. \cite[Lemma 2.3]{DescartesHypPlane}), we assume that $v = (0, \dots , 0 ,1) \in \mathbb{R}^n$, $\N(f)_v = \N(f) \cap \mathcal{H}_{v,1}$ and $\N(f)_{-v} = \N(f) \cap \mathcal{H}_{v,0}$. There exist signomials $f_1,f_2 \colon \mathbb{R}^{n-1}_{>0} \to \mathbb{R}$ such that  $f(x) = f_1(x_1, \dots , x_{n-1}) + x_n f_2(x_1, \dots , x_{n-1})$ for all $x \in \mathbb{R}^n_{>0}$. Note that $f_1^{-1}(\mathbb{R}_{<0})$ (resp. $f_2^{-1}(\mathbb{R}_{<0})$) has the same number of connected components as $f_{|N(f)_{-v}}^{-1}(\mathbb{R}_{<0})$ (resp. $f_{|N(f)_{v}}^{-1}(\mathbb{R}_{<0})$). Moreover, the number of connected components of $f_1^{-1}(\mathbb{R}_{<0})\cup f_2^{-1}(\mathbb{R}_{<0})$ is $D$. We decompose this set into three pairwise disjoint sets
\begin{align*}
 W_0 &=  \{ \tilde{x} \in \mathbb{R}^{n-1}_{>0} \mid f_1(\tilde{x}) < 0,\, f_2(\tilde{x}) \leq 0\} \cup  \{  \tilde{x} \in \mathbb{R}^{n-1}_{>0} \mid  f_1(\tilde{x}) \leq 0,\, f_2(\tilde{x}) < 0\}\\
 W_1 &=  \{\tilde{x} \in \mathbb{R}^{n-1}_{>0} \mid   f_1(\tilde{x}) < 0,\, f_2(\tilde{x}) > 0\}, \quad W_2 =  \{\tilde{x} \in \mathbb{R}^{n-1}_{>0} \mid   f_1(\tilde{x}) > 0,\, f_2(\tilde{x}) < 0\}.
 \end{align*}
Consider the map 
\[ F \colon \mathbb{R}^{n-1}_{>0} \to \mathbb{R},  \quad \tilde{x} \mapsto -\tfrac{f_1(\tilde{x})}{f_2(\tilde{x})} \]
which is continous on $W_1 \cup W_2$ and has some poles and zeros in $W_0$. It holds that
\begin{equation}
\label{Eq::Elevator}
\begin{aligned}
&\text{ if }  \tilde{x} \in W_0, \text{ then} &(\tilde{x},x_n) \in f^{-1}(\mathbb{R}_{<0}) \quad  &\text{ for all  } x_n > 0, \\
&\text{ if }  \tilde{x} \in W_1, \text{ then}  &(\tilde{x},x_n) \in f^{-1}(\mathbb{R}_{<0}) \quad  &\text{ if and only if  }  x_n < F(\tilde{x}),  \\
& \text{ if }  \tilde{x} \in W_2, \text{ then}  &(\tilde{x},x_n) \in f^{-1}(\mathbb{R}_{<0}) \quad  &\text{ if and only if  }  x_n > F(\tilde{x}). 
\end{aligned}
\end{equation}
 
It follows that  the projection map
  \[\pi\colon f^{-1}(\mathbb{R}_{<0}) \to  f_1^{-1}(\mathbb{R}_{<0})\cup f_2^{-1}(\mathbb{R}_{<0}), \quad  \, x \mapsto \pi(x) = (x_1, \dots , x_{n-1}) \]
is well defined, continuous and surjective. This gives that the number of connected components of $f^{-1}(\mathbb{R}_{<0}) $ is at most $D$.

To show equality, we show that for every connected component $U$ of $f_1^{-1}(\mathbb{R}_{<0})\cup f_2^{-1}(\mathbb{R}_{<0})$, the set $\pi^{-1}(U)$ is connected. For $i=0,1,2$, let $U_{i,1},\dots ,U_{i,k_i}$ be the connected components of $U \cap W_i$. By \eqref{Eq::Elevator}, the sets $\pi^{-1}(U_{i,j})$ are connected. Therefore it is enough to show that $\pi^{-1}(U_{i,j} \cup U_{i',j'})$ is connected if $U_{i,j} \cup U_{i',j'}$ is connected for $i,i' \in \{0,1,2\}$, $j \in \{1,\dots,k_i\},\, j' \in \{1,\dots,k_{i'}\}$ . Note that  $U_{1,j} \cup U_{2,j'}$ cannot be connected through a path that is contained in $f_1^{-1}(\mathbb{R}_{<0})\cup f_2^{-1}(\mathbb{R}_{<0})$. Using \eqref{Eq::Elevator}, it follows that $\pi^{-1}(U_{0,j} \cup U_{i',j'})$, $i' =1,2$ is connected. This concludes the proof. For an illustration, we refer to Figure \ref{FIG4_new} and Example \ref{Ex:NewParaFaces}.
\end{proof}

\begin{ex}
\label{Ex:NewParaFaces}
To illustrate the proof of Proposition \ref{Thm_ParallelFaces}, we consider the signomials
\[ f_1(x_1) = x_1^{4} - 16 \, x_1^{3} + 83 \, x_1^{2} - 164 \, x_1 + 96, \quad f_2(x_1) = x_1^{2} - 10 \, x_1 + 20, \quad f(x_1,x_2) = f_1(x_1) + x_2 f_2(x_1).\]
The exponent vectors of $f$ and its Newton polytope is shown in Figure \ref{FIG4_new}(a). The sets $W_0, W_1, W_2$ from the proof of Proposition \ref{Thm_ParallelFaces} have the form
\begin{align}
\label{Eq::NewParaFacesW}
 W_0 = (\xi_1,3) \cup (4,\xi_2), \qquad W_1 = (1,\xi_1)\cup(\xi_2,8), \qquad W_2 = (3,4) 
\end{align}
where $\xi_1 = 5 -\sqrt{5} \approx 2.76, \, \xi_2 = 5 +\sqrt{5} 	\approx 7.23$ are the roots of $f_2$. The preimages of $W_0,W_1,W_2$ under the projection map $\pi\colon \mathbb{R}_{>0}^2 \to \mathbb{R}_{>0}, \, (x_1,x_2) \mapsto x_1$ are depicted in Figure \ref{FIG4_new}(b). Since $W_0\cup W_1 \cup W_2$ is a connected set, it follows that $f$ has one negative connected component.
\end{ex}

\begin{figure}[t]
\centering
\begin{minipage}[h]{0.45\textwidth}
\centering
\includegraphics[scale=0.6]{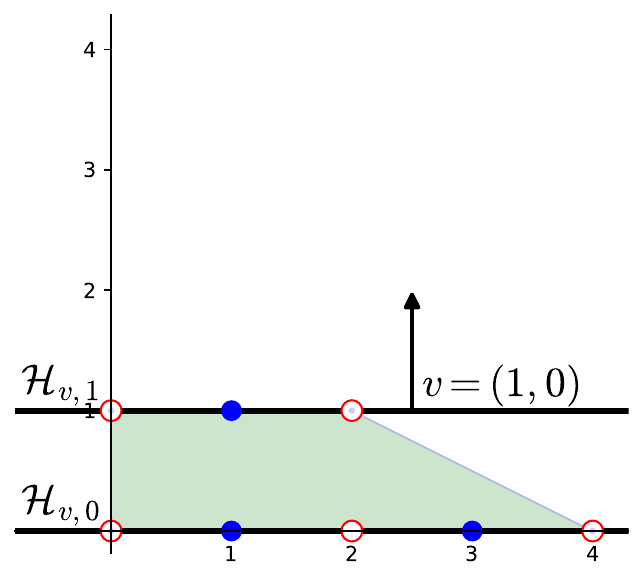}

{\small (a)}
\end{minipage}
\hspace{10pt}
\begin{minipage}[h]{0.45\textwidth}
\centering
\includegraphics[scale=0.7]{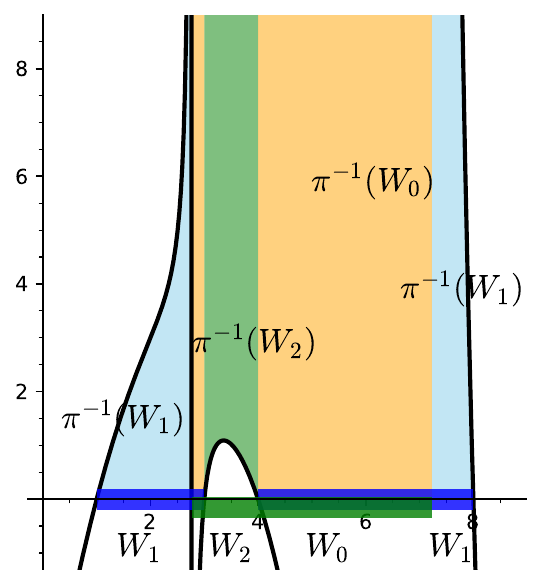}

{\small (b)}
\end{minipage}

\caption{{\small Exponent vectors and the Newton polytope of $f$ from Example~\ref{Ex:NewParaFaces}.  (b) Preimages of the sets $W_0,W_1,W_2$ from \eqref{Eq::NewParaFacesW} under the coordinate projection $\pi $.  }}\label{FIG4_new}
\end{figure}

\begin{remark}
Let $f$ be a signomial satisfying
\[ \sigma(f) \subseteq \N(f)_v \cup \N(f)_{-v}\]
for some $v \in \mathbb{R}$ as in Proposition \ref{Thm_ParallelFaces} and let $A,B$ the sets as in \eqref{Eq::DefAB}. By Theorem \ref{Thm_NegFace}, there is a bijection between the sets
\[ \mathcal{B}_0^-(f_{|\N(f)_v} )\sqcup  \mathcal{B}_0^-(f_{|\N(f)_{-v}} ) \longleftrightarrow \mathcal{B}_0^-(f_{|A} ) \sqcup \mathcal{B}_0^-(f_{B}) .\]
Thus, the graphs $(\mathcal{B}_{A,B},\mathcal{E}_{A,B})$ from \eqref{Eq::GraphABVertex} and  $(\mathcal{B}_{v},\mathcal{E}_{v})$ from \eqref{Eq::GraphvVertex} have the same number of vertices. Since
\[ f_{|A}^{-1}(\mathbb{R}_{<0}) \subseteq f_{|\N(f)_v}^{-1}(\mathbb{R}_{<0}) \quad \text{and} \quad  f_{|B}^{-1}(\mathbb{R}_{<0})  \subseteq f_{|\N(f)_{-v}}^{-1}(\mathbb{R}_{<0}),\]
every edge of the graph $(\mathcal{B}_{A,B},\mathcal{E}_{A,B})$ corresponds to an edge of the graph $(\mathcal{B}_{v},\mathcal{E}_{v})$. Thus, the bound provided in Proposition \ref{Thm_ParallelFaces} is always smaller or equal than the bound given in Proposition~\ref{Lema_StripBound}.
\end{remark}

\begin{ex}
\label{Ex_Running}
 The bound on $b_0(f^{-1}(\mathbb{R}_{<0}))$ in Proposition \ref{Lema_StripBound} and in Proposition \ref{Thm_ParallelFaces} can be different, even though the two statements look similar.

To demonstrate this, consider the signomial $f= 73 \, x  - 55 \, x^{2}  - x^{4}  + y - 20 \, x y + x^{4} y$. The Newton polytope of $f$ is shown in Figure \ref{FIG7}(a). We have that  $\sigma(f) \subseteq \N(f)_v \cup \N(f)_{-v}$ for $v = (0,1)$. The restrictions of $f$ to these two faces are given by:
\[  f_{\N(f)_v} =  73 \, x  - 55 \, x^{2}  - x^{4}  , \qquad f_{\N(f)_{-v}} = y - 20 \, x y + x^{4} y.\]
The pair $(\mathcal{H}_{v,1}, \, \mathcal{H}_{v,0})$ is a pair of enclosing hyperplanes of $\sigma_+(f)$. Let $A,B \subseteq \sigma(f)$ be as defined in \eqref{Eq::DefAB}:
\begin{align*}
A &= (\mathcal{H}^{+}_{v,1} \cap \sigma_-(f)) \cup \sigma_+(f) =  \{(1,1), (1,0),(0,1),(4,1)\}, \\ 
B &= (\mathcal{H}^{-}_{v,0} \cap \sigma_-(f)) \cup \sigma_+(f) =  \{(2,0),(4,0), (1,0),(0,1),(4,1)\}.
\end{align*}
Since $f_{|\N(f)_v}$ and $f_{|A}$ only have one negative coefficient, the sets  $f_{|\N(f)_v}^{-1}(\mathbb{R}_{<0})$ and $f_{|A}^{-1}(\mathbb{R}_{<0})$ are connected by Theorem \ref{Thm::FirstDescartes}(iii). Since the supports of $f_{|\N(f)_{-v}}$ and $f_{|B}$ have strict separating hyperplanes, Theorem \ref{Thm::FirstDescartes}(i) implies that $f_{|\N(f)_{-v}}^{-1}(\mathbb{R}_{<0})$, $f_{|B}^{-1}(\mathbb{R}_{<0})$ are connected. 

One can also verify that  $f_{|A}^{-1}(\mathbb{R}_{<0}) \cap  f_{|B}^{-1}(\mathbb{R}_{<0}) = \emptyset$, e.g. using the \texttt{Maple} \cite{maple} function \texttt{IsEmpty()}. Thus, the bound on the number of connected components of $f^{-1}(\mathbb{R}_{<0})$ provided by Proposition \ref{Lema_StripBound} is two.  The negative connected components of $f_{|A}, \, f_{|B}$ are shown in Figure~\ref{FIG7}(c).

On the other hand, $(1,1) \in f_{|\N(f)_{v}}^{-1}(\mathbb{R}_{<0}) \cap f_{|\N(f)_{-v}}^{-1}(\mathbb{R}_{<0})$. Thus, Proposition \ref{Thm_ParallelFaces} gives that $f^{-1}(\mathbb{R}_{<0})$ is connected.
\end{ex}

\begin{figure}[t]
\centering
\begin{minipage}[h]{0.3\textwidth}
\centering
\includegraphics[scale=0.4]{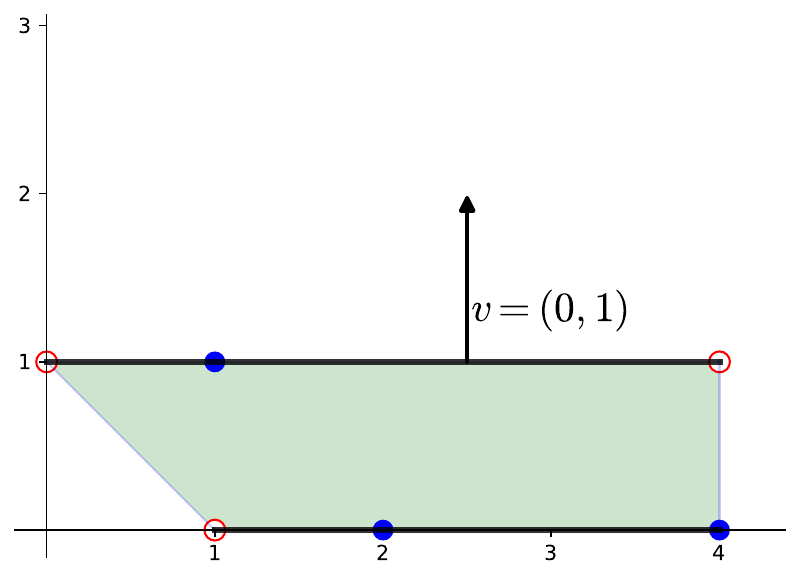}

{\small (a)}
\end{minipage}
\begin{minipage}[h]{0.3\textwidth}
\centering
\includegraphics[scale=0.4]{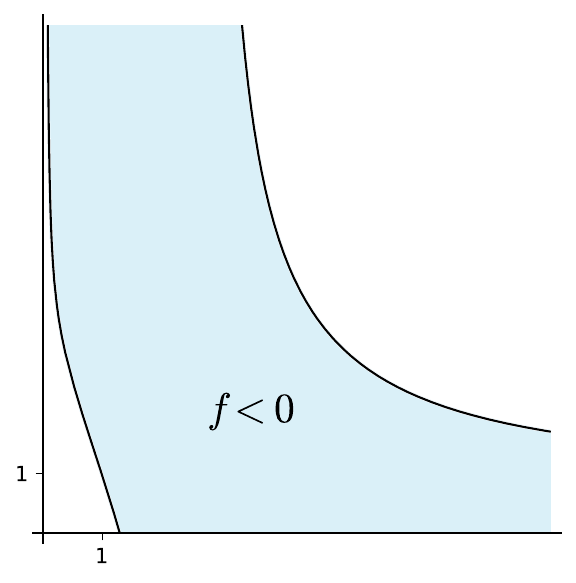}

{\small (b)}
\end{minipage}
\begin{minipage}[h]{0.3\textwidth}
\centering
\includegraphics[scale=0.4]{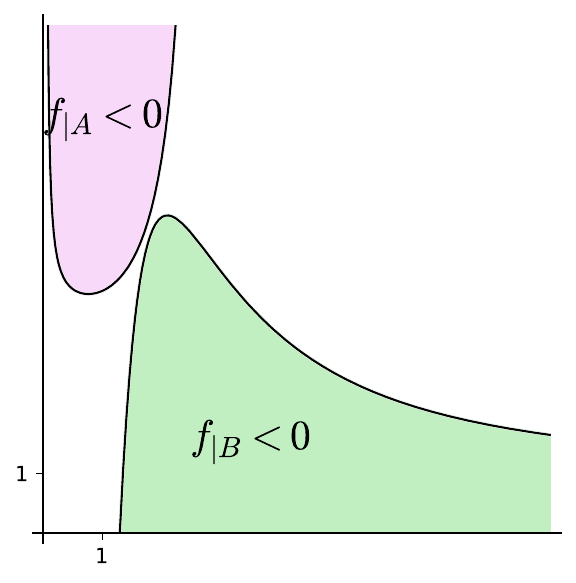}

{\small (c)}
\end{minipage}
\caption{{\small Illustration of Example \ref{Ex_Running} (a)  Newton polytope of $f= 73 \, x  - 55 \, x^{2}  - x^{4}  + y - 20 \, x y + x^{4} y$. (b) The negative connected component of  $f$. (c) Negative connected components of $f_{|A}$ (purple) and $f_{|B}$ (green), where $A =  \{(1,1), (1,0),(0,1),(4,1)\}, \, B =  \{(2,0),(4,0), (1,0),(0,1),(4,1)\}$.}}\label{FIG7}
\end{figure}

From Proposition \ref{Thm_ParallelFaces}, we derive a criterion to ensure that a polynomial has at most one negative connected component. This criterion can be interpreted as a version of Theorem \ref{Thm_Box} where we replace strict separating hyperplanes by non-strict ones. 

\begin{thm}
\label{Thm_ParallelFacesEdge}
Let $f\colon \mathbb{R}^{n}_{>0} \to \mathbb{R}, \, x \mapsto \sum_{\mu \in \sigma(f)} c_{\mu}x^{\mu}$ be a signomial. Assume that there exists $v \in \mathbb{R}^{n}$ such that $\sigma(f) \subseteq \N(f)_v \cup \N(f)_{-v}$ and 
\[b_0 \big( f_{|\N(f)_{v}}^{-1}(\mathbb{R}_{<0}) \big) = b_0 \big( f_{|\N(f)_{-v}}^{-1}(\mathbb{R}_{<0}) \big)   = 1.\]
If there exist negative exponent vectors $\beta_1 \in \N(f)_v$ and  $\beta_2 \in \N(f)_{-v}$ such that $\Conv(\beta_1, \beta_2)$ is an edge of $\N(f)$, then $b_0 \big( f^{-1}(\mathbb{R}_{<0}) \big) = 1$.
\end{thm}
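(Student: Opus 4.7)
The plan is to combine Proposition \ref{Thm_ParallelFaces} with Proposition \ref{Prop_Box}. The hypothesis $\sigma(f) \subseteq \N(f)_v \cup \N(f)_{-v}$ puts us exactly in the setting of Proposition \ref{Thm_ParallelFaces}, so $b_0(f^{-1}(\mathbb{R}_{<0}))$ equals the number $D$ of connected components of the bipartite graph $(\mathcal{B}_v,\mathcal{E}_v)$ from \eqref{Eq::GraphvVertex}. The assumption that each of $f_{|\N(f)_v}^{-1}(\mathbb{R}_{<0})$ and $f_{|\N(f)_{-v}}^{-1}(\mathbb{R}_{<0})$ is connected means $\mathcal{B}_v$ has exactly two vertices, call them $U_1$ and $U_2$. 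So the only thing to check is that there is an edge between them, i.e.\ that $f_{|\N(f)_v}^{-1}(\mathbb{R}_{<0}) \cap f_{|\N(f)_{-v}}^{-1}(\mathbb{R}_{<0}) \neq \emptyset$; this will give $D = 1$ and finish the proof.

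To produce this intersection I would invoke Proposition \ref{Prop_Box} on the pair $f_{|\N(f)_v}$, $f_{|\N(f)_{-v}}$, using $\beta_1$ and $\beta_2$ as the two negative exponent vectors. First, observe that since $\sigma(f) \subseteq \N(f)_v \cup \N(f)_{-v}$, we have $\sigma_+(f_{|\N(f)_v}) \cup \sigma_+(f_{|\N(f)_{-v}}) = \sigma_+(f)$. Thus it suffices to show
\[
\Conv(\{\beta_1,\beta_2\}) \cap \Conv(\sigma_+(f)) = \emptyset.
\]

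The key geometric step, and the only place where the edge hypothesis enters, is the following. Write $e = \Conv(\{\beta_1,\beta_2\})$, which by assumption is an edge (hence a face) of $\N(f)$. Since $\beta_1 \in \N(f)_v$ and $\beta_2 \in \N(f)_{-v}$ sit on two strictly distinct parallel supporting hyperplanes of $\N(f)$ (with $v\cdot \mu$ maximal resp.\ minimal on them), every point in the relative interior of $e$ lies strictly between these two hyperplanes and therefore belongs to neither $\N(f)_v$ nor $\N(f)_{-v}$. Because $\sigma(f) \subseteq \N(f)_v \cup \N(f)_{-v}$, this forces $\sigma(f) \cap e = \{\beta_1,\beta_2\}$, and in particular $\sigma_+(f) \cap e = \emptyset$. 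Now apply the defining property of faces: if some point of $\Conv(\sigma_+(f))$ lay in the face $e$, it would be a convex combination (with positive weights) of elements of $\sigma_+(f)$ all of which would have to lie in $e$, contradicting $\sigma_+(f) \cap e = \emptyset$. Hence $\Conv(\sigma_+(f)) \cap e = \emptyset$, as required.

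With this disjointness established, Proposition \ref{Prop_Box} gives a point in $f_{|\N(f)_v}^{-1}(\mathbb{R}_{<0}) \cap f_{|\N(f)_{-v}}^{-1}(\mathbb{R}_{<0})$, so the unique pair $(U_1,U_2)$ is an edge of $(\mathcal{B}_v,\mathcal{E}_v)$, yielding $D=1$ and hence $b_0(f^{-1}(\mathbb{R}_{<0})) = 1$ via Proposition \ref{Thm_ParallelFaces}. The only conceptual obstacle is the face-property step above; once one sees that the edge hypothesis combined with $\sigma(f) \subseteq \N(f)_v \cup \N(f)_{-v}$ rules out any positive exponent vectors on $e$, the rest of the proof is a direct chaining of the two propositions already established in Sections \ref{Section::NonStrict} and this subsection.
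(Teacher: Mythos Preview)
Your proof is correct and follows the same route as the paper: show $\Conv(\{\beta_1,\beta_2\}) \cap \Conv(\sigma_+(f)) = \emptyset$, then apply Proposition~\ref{Prop_Box} and Proposition~\ref{Thm_ParallelFaces}. You are in fact more explicit than the paper, which only verifies $\sigma(f)\cap\Conv(\{\beta_1,\beta_2\})=\{\beta_1,\beta_2\}$ and then invokes Proposition~\ref{Prop_Box} directly; your face-property argument is exactly the missing step that justifies passing to $\Conv(\sigma_+(f))$. One minor point: you tacitly assume $\N(f)_v \neq \N(f)_{-v}$ when you speak of ``strictly distinct'' supporting hyperplanes; the paper disposes of the degenerate case $\N(f)_v=\N(f)_{-v}$ up front by noting that then $f_{|\N(f)_v}=f$ and the conclusion is immediate from the hypothesis.
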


\begin{proof}
Throughout the proof we assume that $\N(f)_v \neq \N(f)_{-v}$, otherwise the statement is obvious. First, we show that $\Conv(\beta_1, \beta_2) \cap \sigma(f) = \{ \beta_1, \beta_2 \} .$ For $\alpha \in \Conv(\beta_1, \beta_2) \cap \sigma(f)$, if $\alpha \notin  \{ \beta_1, \beta_2 \}$, then there exists $t \in (0,1)$ such that $\alpha = t \beta_1 + (1-t) \beta_2$. Let $\mathcal{H}_{v,a}$ and $\mathcal{H}_{v,b}$ be the supporting hyperplanes of $\N(f)_v$ and $\N(f)_{-v}$ respectively. As $\N(f)_v \neq \N(f)_{-v}$, $a > b$. Thus, we have 
\[ v \cdot \alpha = t (v \cdot \beta_1) + (1-t) (v \cdot \beta_2) = ta + (1-t)b \neq a,b  \qquad \text{ if } t \in (0,1).\]
which contradicts $\alpha \in \mathcal{H}_{v,a} \cup \mathcal{H}_{v,b}$.

Proposition \ref{Prop_Box} implies that $f^{-1}_{|\N(f)_{v}} (\mathbb{R}_{<0}) \cap f^{-1}_{|\N(f)_{-v}} (\mathbb{R}_{<0}) \neq \emptyset$. Thus, by Proposition \ref{Thm_ParallelFaces} we have $b_0\big( f^{-1}(\mathbb{R}_{<0}) \big) = 1$.
\end{proof}

\begin{remark}
In the last step of the proof of Theorem \ref{Thm_ParallelFacesEdge},  instead of Proposition \ref{Thm_ParallelFaces} one could use Theorem \ref{Thm_NegFace} and Proposition \ref{Lema_StripBound} as well to conclude that  $b_0\big( f^{-1}(\mathbb{R}_{<0}) \big) = 1$.
\end{remark}

\begin{ex}
\label{Ex_cube}
For each $c_1, \dots , c_8 \in \mathbb{R}_{>0}$, the Newton polytope of 
\[f(x,y,z) = c_1 x+c_2 xy- c_3y-c_4+c_5 yz- c_6z- c_7 xz- c_8 xyz\]
 is the cube depicted in Figure \ref{FIG8}. The top and the bottom faces of the cube are parallel to each other and contain all the exponent vectors of $f$. Choosing $v = (0,0,1)$, we have that $\sigma(f) \subseteq \N(f)_v \cup \N(f)_{-v}$. Since both $\sigma(f_{|\N(f)_v})$ and $\sigma(f_{|\N(f)_{-v}})$ have a strict separating hyperplane, $b_0 \big( f_{|\N(f)_{v}}^{-1}(\mathbb{R}_{<0}) \big) = b_0 \big( f_{|\N(f)_{-v}}^{-1}(\mathbb{R}_{<0}) \big)   = 1$ by Theorem \ref{Thm::FirstDescartes}(i).

The vertices of the edge $\Conv\big( (0,0,0),(0,0,1) \big)$  correspond to negative exponent vectors, thus $f^{-1}(\mathbb{R}_{<0})$ is connected by Theorem \ref{Thm_ParallelFacesEdge}.
\end{ex}

\begin{figure}[t]
\centering
\includegraphics[scale=0.5]{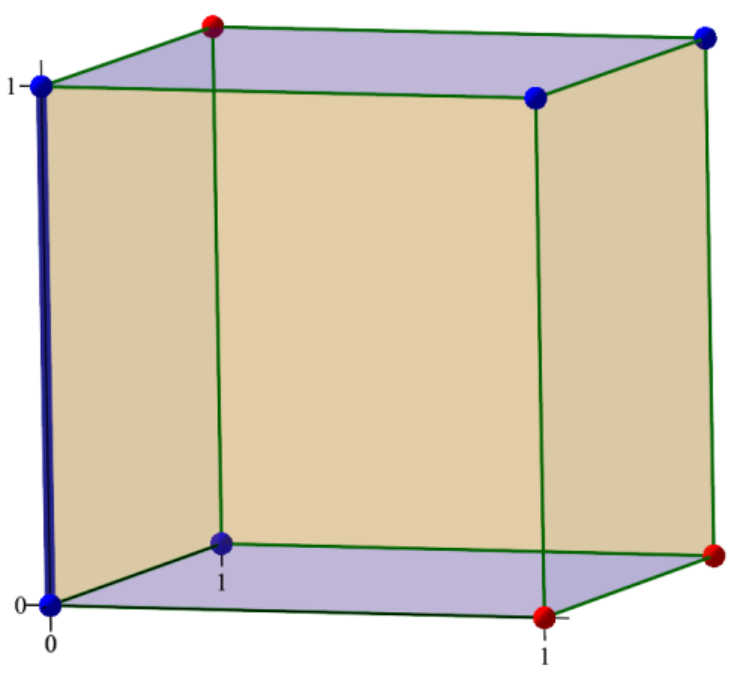}
\caption{{\small Newton polytope of $f(x,y,z) = c_1 x+c_2 xy- c_3y-c_4 +c_5 yz- c_6z- c_7 xz- c_8 xyz$ from Example \ref{Ex_cube}. Red (resp. blue) dots correspond to positive (resp. negative) exponent vectors. The blue squares are parallel faces of $\N(f)$ whose union contains $\sigma(f)$. The blue thick edge $\Conv( (0,0,0),(0,0,1) )$ joins the two blue parallel faces and its vertices are negative exponent vectors. }}\label{FIG8}
\end{figure}

\subsection{Algorithm for connectivity}
\label{Sec_Algo}

Based on Theorem \ref{Thm_NegFace} and Theorem \ref{Thm_ParallelFacesEdge}, we give a recursive algorithm that checks a sufficient condition for ensuring that a signomial $f$ has one negative connected component. Using Theorem \ref{Thm_NegFace}, we reduce $f$ to a face of its Newton polytope that contains all the negative exponent vectors if possible. Using Theorem \ref{Thm_ParallelFacesEdge}, we split up $f$ to parallel faces of its Newton polytope whose union contains all exponent vectors of $f$. We repeat this reduction until the polynomials are simple enough and we can apply one of the following criterion:
\begin{itemize}
\item[(i)] $f$ has one negative coefficient, Theorem \ref{Thm::FirstDescartes}(iii),
\item[(ii)] $f$ has one positive coefficient and $n \geq 2$, Corollary \ref{Cor_OnePos},
\item[(iii)] the support of $f$ has a strict separating hyperplane, Theorem  \ref{Thm::FirstDescartes}(i),
\item[(iv)] the exponent vectors of $f$ lie in and outside of a simplex as in Theorem \ref{Thm::VertCones46} or in Corollary \ref{Lemma_SimplexVertCones}.
\end{itemize}
We define a submethod \texttt{CheckConnectivity(}\texttt{)} that checks these conditions. If one of (i)-(iv) holds, \texttt{CheckConnectivity(}$f$\texttt{)}  returns true and we know that $f^{-1}(\mathbb{R}_{<0})$ is connected. If none of the conditions (i)-(iv) is true, \texttt{CheckConnectivity(}$f$\texttt{)}  returns false. Checking (i) and (ii) is simple. The problem of deciding whether $\sigma(f)$ has a strict separating hyperplane can be reduced to a feasibility problem in linear programming. Such a problem can be solved in polynomial time and there exist efficient algorithms that work well in practice  \cite[Chapter 3]{Lovasz}. Therefore, condition (iii) can be checked even for signomials in many variables and many monomials. Checking condition (iv) is a significantly harder problem, and in practice we might avoid it.

The submethod \texttt{IntersectionNonempty()} checks whether there exists an edge of the Newton polytope between two parallel faces such that both vertices of the edge correspond to a negative exponent vector.
\begin{algorithm}
\caption{\texttt{CheckConnectivityRecursive} }
\label{Algo_Conn}
\begin{algorithmic}[1]
\REQUIRE a signomial $f$
\ENSURE \TRUE \, if $f^{-1}(\mathbb{R}_{<0})$ is connected, \FALSE  \, if the method is inconclusive

\IF{\texttt{CheckConnectivity(}$f$\texttt{)} = \TRUE }
\RETURN \TRUE
\ENDIF

\STATE $F \leftarrow$ smallest face of $\N(f)$ that contains $\sigma_{-}(f)$

\IF{$F \subseteq \N(f)$ is a proper face }
\RETURN \texttt{CheckConnectivityRecursive(} $f_{|F}$\texttt{)}
\ENDIF

\FOR{ every proper face $\N(f)_{v}  \subseteq \N(f)$ such that $\sigma(f) \subseteq \N(f)_{v} \cup \N(f)_{-v} $ }
\IF{ \texttt{IntersectionNonempty(} $f_{|\N(f)_v},f_{|\N(f)_{-v}}$   \texttt{)} }
\STATE \texttt{v\_connected} $ \leftarrow$  \texttt{CheckConnectivityRecursive(} $f_{|\N(f)_v}$\texttt{)} 
\STATE \texttt{vminus\_connected} $ \leftarrow$  \texttt{CheckConnectivityRecursive(} $f_{|\N(f)_{-v}}$\texttt{)}
\IF{\texttt{v\_connected} \AND  \texttt{vminus\_connected}}
\RETURN \TRUE
\ENDIF
\ENDIF
\ENDFOR

\RETURN \FALSE
\end{algorithmic}
\end{algorithm}

To compute the smallest face $F \subseteq \N(f)$ such that $\sigma_{-}(f) \subseteq F$, one proceeds as follows. First, one finds all the facets of $\N(f)$ (using \texttt{Polymake} \cite{polymake} or \texttt{SageMath} \cite{sagemath}). If $\N(f)$ does not have any facet containing $\sigma_-(f)$, then $\N(f)$ is the smallest face that contains $\sigma_-(f)$. Otherwise, the intersection of the facets containing $\sigma_-(f)$ give the smallest face that contains $\sigma_-(f)$.

One possible way to compute all proper faces $\N(f)_v \subseteq \N(f)$ such that $\sigma(f) \subseteq \N(f)_{v} \cup \N(f)_{-v} $ is the following:
\begin{itemize}
\item[(1)]  Compute the outer normal fan $\mathcal{F}$ of $\N(f)$ and the common refinement $\mathcal{F}  \bigwedge -\mathcal{F}$ of $\mathcal{F}$ and $-\mathcal{F}$ (\cite[Definition 7.6]{Ziegler_book}). Here, $-\mathcal{F}$ is the inner normal fan of $\N(f)$, i.e. the fan obtained by taking the negative of each cone in $\mathcal{F}$. 
\item[(2)] Collect a vector from the relative interior of each cone in $\mathcal{F}  \bigwedge -\mathcal{F}$. These vectors are the normal vectors of the parallel faces of $\N(f)$. 
\item[(3)] Consider all the parallel faces and check whether their union contains $\sigma(f)$. This can be done by taking each vector $v$ from step (2) and computing their scalar product with the exponent vectors. The vector $v$ gives a pair of parallel faces containing $\sigma(f)$ if and only if $\{ v \cdot \mu \mid \mu \in \sigma(f)\}$ has exactly two elements.
\end{itemize}
If the Newton polytope has many faces, computing $\mathcal{F}  \bigwedge -\mathcal{F}$  might become too expensive. In our implementation, we use the following simplification. For a facet $F \subset \N(f)$, there exists a unique (up to scaling) $v \in \mathbb{R}^{n}$ such that $F = \N(f)_v$. We consider all the facets $\N(f)_v$ and check whether $\sigma(f) \subseteq \N(f)_{v} \cup \N(f)_{-v} $. Thus, our code runs through only a sublist of the list in the for-loop in line $8$ in Algorithm \ref{Algo_Conn}. Even under this simplification, the list of facets might become intractably long in practice. Note that there exist $n$-dimensional polytopes with $m$ vertices and
\begin{align*}
\begin{cases}
\vspace{10pt}
\frac{m}{m-\frac{n}{2}}\binom{m-\frac{n}{2}}{m-n} \qquad &\text{if } n  \text{ is even}, \\
2\binom{m-\frac{n+1}{2}}{m-n} \qquad &\text{if } n \text{ is odd,}
\end{cases}
\end{align*}
many facets \cite[Corollary 3.45]{JoswigTheobald_book}.

Using  \texttt{OSCAR} \cite{OSCAR,OSCAR-book} and \texttt{Polymake} \cite{polymake}, we implemented Algorithm \ref{Algo_Conn} in \texttt{Julia}. The code can be found at the Github repository \cite{gitHub}. A rigorous complexity analysis of Algorithm \ref{Algo_Conn} lies outside the scope of the current paper. In the remaining sections, we focus on the practical aspects of Algorithm \ref{Algo_Conn}.

\begin{ex}
\label{Ex_cube4d}
To demonstrate how Algorithm \ref{Algo_Conn} works, consider the polynomial 
\[f(x,y,z,w) = c_1 x+c_2 xy- c_3y-c_4+c_5 yz- c_6z- c_7 xz- c_8 xyz +c_{9}w^3+c_{10}xw,\]
 where $c_1, \dots ,c_{10} \in \mathbb{R}_{>0}$. Since $\# \sigma_-(f) \geq 2, \, \# \sigma_+(f)  \geq 2$ and $\sigma(f)$ does not have a strict separating hyperplane \texttt{CheckConnectivity($f$)} returns false. Following the algorithm, we compute the smallest face $F \subseteq \N(f)$ containing $\sigma_-(f)$. This is a $3$-dimensional face with normal vector $v = (0,0,0,-1)$. Then the algorithm calls \texttt{CheckConnectivityRecursive($f_{|F}$)}, where
\[ f_{|F} = c_1 x+c_2 xy- c_3y-c_4+c_5 yz- c_6z- c_7 xz- c_8 xyz,\]
which is the same polynomial as in Example \ref{Ex_cube}. 

Since $f_{|F}$ has more than one positive and more than one negative exponent vectors and $\sigma(f_{|F})$ does not have a strict separating hyperplane, \texttt{CheckConnectivity($f_{|F}$)} returns false and the algorithm continues with computing the smallest face of $\N(f_{|F}) = F$ containing $\sigma_-(f_{|F})$. Since this smallest face is $F$ itself, the algorithm proceeds with the for-loop in line $8$ of Algorithm \ref{Algo_Conn}.

The faces 
\[G_1 = F_{v_1}, \quad v_1 = (0,0,1,-1), \qquad  G_2 = F_{v_2}, \quad v_2 = (0,0,-1,-1)\]
are parallel and contain all the exponent vectors of $f_{|F}$. As discussed in Example \ref{Ex_cube}, there is an edge with negative vertices between $G_1$ and $G_2$, thus \texttt{IntersectionNonEmpty($f_{|G_1}$,$f_{|G_2}$)} returns true and the method \texttt{CheckConnectivityRecursive} is called again for $f_{|G_1}$ and  $f_{|G_2}$. Since $\sigma(f_{|G_1})$ and $\sigma(f_{|G_2})$ have strict separating hyperplanes, both \texttt{CheckConnectivity($f_{|G_1}$)} and \texttt{CheckConnectivity($f_{|G_2}$)} return true, and the algorithm terminates with the result that $f^{-1}(\mathbb{R}_{<0})$ is connected. The steps taken by the algorithm can be found in Figure \ref{FIG9}.
\end{ex}

\begin{figure}[t]
\centering
\includegraphics[scale=0.75]{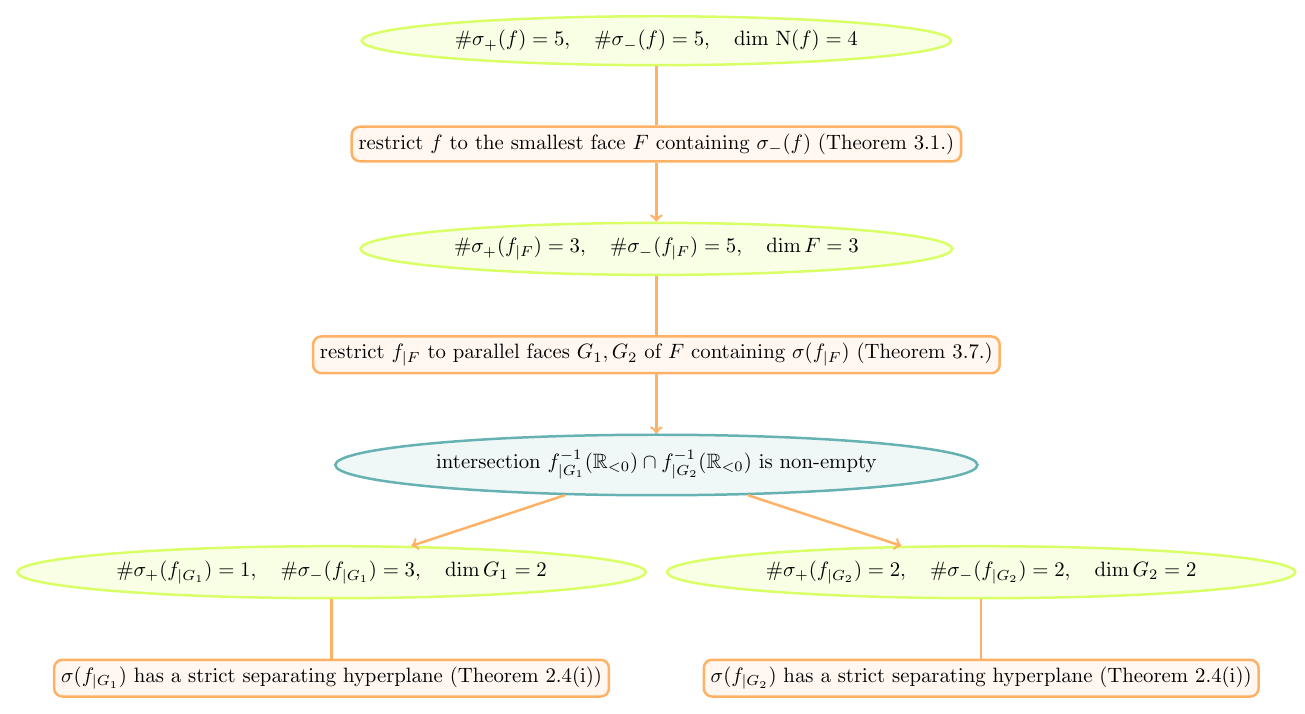}
\caption{{\small Depiction of the steps taken by Algorithm \ref{Algo_Conn} as in Example \ref{Ex_cube4d}. }}\label{FIG9}
\end{figure}

\begin{remark}
Even if Algorithm \ref{Algo_Conn} terminates inconclusively, it can be that $f^{-1}(\mathbb{R}_{<0})$ is connected. To illustrate this, consider the polynomial $f(x) = -x^2 + x -1$, which has one negative connected component. 

Since $f$ has only one variable, more than one negative coefficient, and $\sigma(f)$ does not have a strict separating hyperplane, the conditions in Theorem \ref{Thm::FirstDescartes}(i),(iii),  Corollary~\ref{Cor_OnePos}, Theorem~\ref{Thm::VertCones46}, and in Corollary~\ref{Lemma_SimplexVertCones} are not satisfied. Thus, the submethod \texttt{CheckConnectivity()} returns false. As the Newton polytope does not have any proper face containing $\sigma_-(f)$ and there is no pair of parallel faces containing all the exponent vectors, Algorithm \ref{Algo_Conn} is inconclusive.

It is worth noting that the condition checked by Algorithm \ref{Algo_Conn} is quite restrictive. Specifically, it verifies not only whether the signomial $f(x) = -x^2 + x -1$ has one negative connected component, but also whether, for all polynomials in the family $f_c(x) = -c_2 x^2 + c_1 x -c_0$, where $c_0, c_1, c_2 \in\mathbb{R}_{>0}$, the set $f_c^{-1}(\mathbb{R}_{<0})$ is connected. In the current example, $-x^2 + 3x - 1$ has two negative connected components.
\end{remark}

\subsection{The closure property}
We finish the section with a statement that will allow us to apply Algorithm \ref{Algo_Conn} for reaction networks in Section \ref{Sec_App}. We say that a signomial $f$ satisfies the \emph{closure property} if the closure of $ f^{-1}(\mathbb{R}_{< 0} )$ equals $f^{-1}(\mathbb{R}_{\leq 0})$.

Note that the closure property is not always satisfied. For example, consider $f = x^2 - 2x +1$. While $f^{-1}(\mathbb{R}_{< 0} )$ and consequently its closure are empty,  $f^{-1}(\mathbb{R}_{\leq 0}) = \{ 1 \}$.

\begin{prop}
\label{Prop_ClosureProper}
A signomial $f\colon \mathbb{R}^{n}_{>0} \to \mathbb{R}, \, x \mapsto \sum_{\mu \in \sigma(f)} c_{\mu}x^{\mu}$  satisfies the closure property if one of the following holds:
\begin{itemize}
\item[(i)] $\sigma(f)$ has a strict separating hyperplane.
\item[(ii)]  $\sigma_{-}(f) \subseteq F$, for a proper face $F \subseteq \N(f)$.
\end{itemize}
 \end{prop}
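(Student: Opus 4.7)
The plan is to prove the non-trivial inclusion $f^{-1}(\mathbb{R}_{\leq 0}) \subseteq \overline{f^{-1}(\mathbb{R}_{<0})}$; the reverse inclusion always follows from continuity of $f$. Fix $x_0 \in f^{-1}(\mathbb{R}_{\leq 0})$. If $f(x_0) < 0$, then $x_0$ already lies in $f^{-1}(\mathbb{R}_{<0})$ and there is nothing to prove, so assume $f(x_0) = 0$. In both hypotheses the strategy is to produce a path of the form $t \mapsto t^v \ast x_0$ for a suitable vector $v$ that converges to $x_0$ as $t \to 1^+$ and takes values in $f^{-1}(\mathbb{R}_{<0})$ for $t$ slightly greater than $1$. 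All the analysis reduces to studying the induced univariate signomial
\[ g(t) := f(t^v \ast x_0) = \sum_{\mu \in \sigma(f)} c_\mu x_0^\mu \, t^{v \cdot \mu}. \]

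For (i), take $v$ to be the normal of the strict separating hyperplane $\mathcal{H}_{v,a}$. The strictness assumption gives some $\mu^\ast \in \sigma_-(f)$ with $v \cdot \mu^\ast > a$, so the maximum of $v \cdot \mu$ on $\sigma(f)$ exceeds $a$, and because $\sigma_+(f) \subseteq \mathcal{H}^-_{v,a}$ this maximum is attained only by negative exponent vectors. Hence $\LC(g) < 0$. For (ii), take $v$ to be an outer normal vector of the face $F$, so that $F = \N(f)_v$. Then the leading coefficient of $g$ equals $f_{|F}(x_0)$. Since $F$ is a proper face, the set $\sigma(f) \setminus F$ is non-empty, and the assumption $\sigma_-(f) \subseteq F$ forces every exponent vector in this complement to have a positive coefficient. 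Combining $f(x_0) = 0$ with the identity $f(x_0) = f_{|F}(x_0) + \sum_{\mu \in \sigma(f)\setminus F} c_\mu x_0^\mu$ yields
\[ \LC(g) = f_{|F}(x_0) = -\sum_{\mu \in \sigma(f) \setminus F} c_\mu x_0^\mu < 0. \]

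In both cases $v$ is a separating vector of $\sigma(f)$, so $g$ has at most one sign change in its coefficient sign sequence, $\LC(g) < 0$, and $g(1) = f(x_0) = 0$. Zero sign changes is impossible: every coefficient of $g$ would then be non-positive with at least one strictly negative, forcing $g(t) < 0$ for all $t > 0$ and contradicting $g(1) = 0$. Hence $g$ has exactly one sign change, so by Descartes' rule of signs for signomials $g$ has at most one positive real root counted with multiplicity. The root at $t = 1$ is therefore simple, and since $\LC(g) < 0$ we conclude $g(t) < 0$ for all $t$ in a right neighborhood of $1$. The path $t \mapsto t^v \ast x_0$ then approaches $x_0$ through points of $f^{-1}(\mathbb{R}_{<0})$, proving $x_0 \in \overline{f^{-1}(\mathbb{R}_{<0})}$.

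The main obstacle is the verification in case (ii) that $\LC(g) < 0$: unlike case (i), strictness of the separating hyperplane is not available, and one must instead exploit the assumption $f(x_0) = 0$ together with the positivity of all coefficients outside $F$ to produce the strict sign of $f_{|F}(x_0)$. Once this single sign is secured, the concluding Descartes-style argument on the induced univariate signomial is uniform across both cases.
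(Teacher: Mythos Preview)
Your proof is correct and follows essentially the same approach as the paper: reduce to the induced univariate signomial $g(t)=f(t^v\ast x_0)$, verify $\LC(g)<0$, and invoke Descartes' rule to force $g(t)<0$ for $t>1$. The paper cites (i) from prior work and argues (ii) separately by noting that all non-leading coefficients of $g$ are positive (so the trailing coefficient is positive and there is exactly one sign change); you instead observe that in both cases $v$ is a separating vector and rule out zero sign changes via $g(1)=0$, which is a minor organizational difference rather than a different idea.
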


\begin{proof}
(i) has been showed in \cite[Theorem 3.6]{DescartesHypPlane}. (ii) follows by almost the same argument. We recall it for the sake of completeness. Since $f^{-1}(\mathbb{R}_{\leq 0})$ is closed, it contains the closure of $f^{-1}(\mathbb{R}_{< 0})$. To show the reverse inclusion, we pick a point $x \in f^{-1}(\{ 0 \})$ and show that it can be written as a limit of elements from $f^{-1}(\mathbb{R}_{< 0})$.

Let $v \in \mathbb{R}^{n} \setminus \{ 0 \}$ such that $\N(f)_v = F$. Consider the univariate signomial
\[\mathbb{R}_{>0} \to \mathbb{R}, \quad t \mapsto f(t^v \ast x) = \big( \sum_{\mu \in F \cap \sigma(f)} c_{\mu} x^{\mu} \big) t^{d} + \sum_{\mu \in \sigma(f) \setminus F} c_{\mu} x^{\mu} t^{v \cdot \mu},\]
where $d=\max_{\mu \in \sigma(f)} v \cdot \mu$. The leading coefficient of $f(t^v \ast x)$ is negative since otherwise $f(x) > 0$. Since $F$ is a proper face, $\sigma(f) \setminus F \neq \emptyset$, therefore the trailing coefficient of $f(t^v \ast x)$ is positive. By Descartes' rule of signs, $1$ is a positive real root of $f(t^v \ast x)$ and $(1+\tfrac{1}{n})^v\ast x \in f^{-1}(\mathbb{R}_{<0})$ for all $n \in \mathbb{N}$. Furthermore, the sequence $\{(1+\tfrac{1}{n})^v\ast x \}_{n \in \mathbb{N}}$ converges to $x$.
\end{proof}

\section{Application to reaction networks}
\label{Sec_App}
A reaction network is a collection of reactions between (bio)chemical species. The species are represented by formal variables and the reactions by arrows between non-negative integer linear combination of the species. The goal of reaction network theory is to study the evolution of the concentration of the species in time, which is usually modeled by an ordinary differential equation system (ODE). For an introduction to reaction network theory, we refer to \cite{CRN_Dickenstein}. Here, we consider some specific reaction networks, where Algorithm \ref{Algo_Conn} can be applied to verify that the \emph{parameter region of multistationarity} is path connected.

\begin{remark}
An open subset of a Euclidean space is connected if and only if it is path connected. Thus, for negative connected components of a signomial these two notions of connectivity coincide. As the parameter region of multistationarity might not be an open set, path-connectivity is a stronger property than connectivity.
\end{remark}

\subsection{Weakly irreversible phosphorylation cycle}
First, we consider the reaction network that was studied in \cite{ParamGeo}:
\begin{equation}
\label{Eq_weaklyr}
\begin{aligned}
&\mathrm{S} +\mathrm{E} \xrightleftharpoons[\kappa_{2}]{\kappa_{1}} Y_1 \xrightarrow{\kappa_{3}} Y_2 \xrightleftharpoons[\kappa_{5}]{\kappa_{4}} \mathrm{S}_{\rm p} + \mathrm{E} \xrightleftharpoons[\kappa_{7}]{\kappa_{6}} Y_3 \xrightarrow{\kappa_{8}} Y_4 \xrightleftharpoons[\kappa_{10}]{\kappa_{9}} \mathrm{S}_{{\rm p}{\rm p}} + \mathrm{E} \\
&\mathrm{S}_{{\rm p}{\rm p}} +\mathrm{F} \xrightleftharpoons[\kappa_{12}]{\kappa_{11}} \mathrm{Y}_5 \xrightarrow{\kappa_{13}} \mathrm{Y}_6 \xrightleftharpoons[\kappa_{15}]{\kappa_{14}} \mathrm{S}_{\rm p} + \mathrm{F} \xrightleftharpoons[\kappa_{17}]{\kappa_{16}} \mathrm{Y}_7 \xrightarrow{\kappa_{18}} \mathrm{Y}_8 \xrightleftharpoons[\kappa_{20}]{\kappa_{19}} \mathrm{S} + \mathrm{F}, 
\end{aligned}
\end{equation}
which represents the two-site phosphorylation cycle of a substrate $\mathrm{S}$, where both the phosphorylation and dephosphorylation processes follow a weakly irreversible mechanism. The reaction network has $13$ species and $20$ reactions. Under the assumption of mass action kinetics, the evolution of the concentration of the species is modeled by an ODE system of the form:
\begin{align*}
\dot{x} = f_\kappa(x),
\end{align*}
where $\kappa = (\kappa_1,\dots, \kappa_{20})$ are positive parameters, called \emph{reaction rate constants}, and $f_\kappa$ is a polynomial in $13$ variables. One can find the exact form of $f_\kappa$ in the accompanying \texttt{Jupyter} notebook \cite{gitHub}.

The set of all positive steady states of the ODE  system equals the variety
\[V_\kappa := \{ x \in \mathbb{R}^{13}_{>0} \mid f_\kappa(x) = 0\}.\]
For a given initial condition, the trajectories of the ODE are contained in \emph{stoichiometric compatibility classes} that are defined as
\[\mathcal{P}_c := \{ x \in \mathbb{R}^{13}_{>0} \mid Wx = c\},\]
where $c \in \mathbb{R}^{3}$ is the so-called \emph{total concentration parameter} and $W \in  \mathbb{R}^{3 \times 13}$ is a matrix whose rows give the \emph{conservation laws} of the network. A steady state $x \in V_\kappa \cap \mathcal{P}_c$ is called a \emph{relevant boundary steady state} if some of the coordinates of $x$ are zero and $\mathcal{P}_c \cap \mathbb{R}^n_{>0} \neq \emptyset$. A pair of parameters $(\kappa,c)$ \emph{enables multistationarity} if $V_\kappa \cap \mathcal{P}_c$ contains at least two points. The \emph{parameter region of multistationarity} is the set of all pairs $(\kappa,c)$ enabling multistationarity.

By sampling parameters and using a connectivity graph, connectivity of the parameter region of multistationarity  had been studied for the weakly irreversible phosphorylation cycle~\eqref{Eq_weaklyr} in~\cite{ParamGeo}. Based on that numerical observation, it was conjectured that the region is connected.

 In \cite[Algorithm 2.5]{EF_connpaper}, the authors gave an algorithm that certifies that the parameter region of multistationarity is path connected for reaction networks satisfying some technical conditions. The algorithm is based on the following result.
\begin{prop} 
\label{Lemma_CRNconn}
\cite[Theorem 2.4]{EF_connpaper} For a conservative reaction network without relevant boundary steady states, there exists a polynomial
\[q\colon \mathbb{R}^{n+ \ell}_{>0} \to \mathbb{R}\]
such that if $q$ satisfies the closure property and $q^{-1}(\mathbb{R}_{<0})$ is path connected, then the parameter region of multistationarity is path connected.
\end{prop}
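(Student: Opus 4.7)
The plan is to carry out the standard degree-theoretic recipe for multistationarity in mass-action networks. Since the network is conservative, the conservation law matrix $W \in \mathbb{R}^{\ell \times n}$ has full row rank and each stoichiometric compatibility class $\mathcal{P}_c$ is an $(n-\ell)$-dimensional affine subspace. The first step is to produce, under the unstated technical conditions, a positive monomial parametrization of the positive steady state variety: a map $\phi \colon \mathbb{R}^{n+\ell}_{>0} \to \mathbb{R}^{r}_{>0}$ such that for every $(\kappa, c)$ with $\mathcal{P}_c \cap \mathbb{R}_{>0}^{n} \neq \emptyset$, the positive steady states in $V_\kappa \cap \mathcal{P}_c$ are exactly the $x$'s for which $Wx = c$ and $\phi(x, c) = \kappa$. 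The absence of relevant boundary steady states ensures that the positive steady state set behaves well under limits.

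Next, I would define $q$ on $\mathbb{R}^{n+\ell}_{>0}$ as (a monomial multiple of) the Jacobian determinant of the combined map $(x, c) \mapsto (\phi(x, c), Wx)$ restricted to the steady state locus. A Brouwer degree argument tailored to conservative networks then shows that the signed count of preimages of $(\kappa, c)$ under $(x,c) \mapsto (\phi(x, c), Wx)$ is constant (namely zero, after subtracting a boundary contribution), so a pair $(\kappa, c)$ enables multistationarity precisely when its fiber meets the region $\{q < 0\}$. This is the characterization that lets one pull the multistationarity question back from the parameter space to $\mathbb{R}^{n+\ell}_{>0}$.

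With this in place, consider the continuous surjection
\[
\pi \colon q^{-1}(\mathbb{R}_{<0}) \longrightarrow \{(\kappa, c) \text{ enabling multistationarity}\}, \qquad (x, c) \longmapsto (\phi(x, c), c).
\]
Path-connectivity of the source is pushed forward by $\pi$ to path-connectivity of the image, since continuous images of path-connected sets are path-connected. Degenerate multistationary pairs, where two steady states coalesce and the Jacobian drops rank, correspond to zeros of $q$ rather than negative values; the closure property $\overline{q^{-1}(\mathbb{R}_{<0})} = q^{-1}(\mathbb{R}_{\leq 0})$ is precisely what is needed to guarantee that these boundary points lie in the closure of $\pi(q^{-1}(\mathbb{R}_{<0}))$ and therefore cannot disconnect the full parameter region of multistationarity from the image of $\pi$.

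The main obstacle is the first step: producing the monomial parametrization $\phi$ and verifying that the sign of the associated Jacobian $q$ genuinely detects multistationarity. This is exactly where the technical hypotheses on the network enter (for example, assumptions guaranteeing a positive parametrization of $V_\kappa$ and dissipativity so that the degree argument applies). Once those structural properties are established, the connectivity transfer from $q^{-1}(\mathbb{R}_{<0})$ to the multistationarity region is a formal consequence of continuity of $\pi$ together with the closure property that handles the $q = 0$ boundary.
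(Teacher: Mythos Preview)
The paper does not prove this proposition at all: it is stated as a citation of \cite[Theorem~2.4]{EF_connpaper} and used as a black box. There is therefore no ``paper's own proof'' to compare your attempt against. Your sketch is a plausible outline of the degree-theoretic machinery that underlies results of this type (a monomial parametrization of the steady-state variety, a Jacobian-sign criterion for multistationarity, and a continuous surjection transferring path-connectivity), and it correctly identifies where the closure property enters. But within the scope of the present paper no argument is required or given; the content of your proposal belongs to the cited reference, not to this paper.
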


In \cite{FeliuPlos,EF_connpaper}, it was described how to check whether the network is conservative, does not have any relevant boundary states and how to compute the polynomial $q$. To check if $q$ satisfies the closure property and $q^{-1}(\mathbb{R}_{<0})$ is path connected, the authors in \cite{EF_connpaper} used strict separating hyperplanes \cite[Theorem 3.6]{DescartesHypPlane}. Using this strategy, it was verified for a large number of reaction networks that the parameter region of multistationarity is path connected. 

The weakly irreversible phosphorylation cycle \eqref{Eq_weaklyr} satisfies the technical conditions of Proposition \ref{Lemma_CRNconn}, i.e. it is conservative and does not have any relevant boundary steady states. The polynomial $q$ from Proposition \ref{Lemma_CRNconn} associated with the network \eqref{Eq_weaklyr} is a large polynomial with $1248$ monomials.

The method in \cite[Algorithm 2.5]{EF_connpaper} was inconclusive, because $\sigma(q)$ does not have a strict separating hyperplane. In the following, we go through Algorithm \ref{Algo_Conn} step-by-step and show that $q$ satisfies the closure property and $q^{-1}(\mathbb{R}_{<0})$ is path connected, which implies by Proposition \ref{Lemma_CRNconn} that the parameter region of multistationarity for the weakly irreversible phosphorylation cycle \eqref{Eq_weaklyr} is path connected. The computations were done using \texttt{OSCAR} \cite{OSCAR,OSCAR-book} and \texttt{Polymake} \cite{polymake}. The code can be found at the Github repository \cite{gitHub}.

The Newton polytope of $q$ has dimension $16$, $1020$ out of its $1248$ monomials are positive, and $228$ are negative. The smallest face $F \subseteq \N(q)$ that contains $\sigma_-(q)$ has dimension $12$ and contains $212$ positive exponent vectors of $q$. Using Proposition \ref{Prop_ClosureProper}, we conclude that $q$ satisfies the closure property.

Following Algorithm \ref{Algo_Conn}, we study the restricted polynomial $q_{|F}$. The Newton polytope $\N(q_{|F})$ has $37$ facets. We choose the first facet $G_1 = \N(q)_v$ that is provided by \texttt{Polymake}. For the face $G_2 = \N(f)_{-v}$ it holds that $\sigma(q_{|F}) \subseteq G_1 \cup G_2$. Furthermore, there exists a pair of negative exponent vectors $\beta_1, \beta_2$ such that $\beta_1 \in G_1$ and $\beta_2 \in G_2$ and $\Conv(\beta_1, \beta_2)$ is an edge of $\N(q_{|F})$. Thus, by Theorem \ref{Thm_ParallelFacesEdge}, it is enough to show that $q_{|G_1}^{-1}(\mathbb{R}_{<0})$ and $q_{|G_2}^{-1}(\mathbb{R}_{<0})$ are connected. This holds by Theorem \ref{Thm::FirstDescartes}(i), since the support of $q_{|G_{1}}$ and $q_{|G_{2}}$ have strict separating hyperplanes. For an overview of the steps taken by Algorithm \ref{Algo_Conn}, we refer to Figure \ref{FIG_Gunawardena}. 

\begin{thm}
The parameter region of multistationarity of the weakly irreversible phosphorylation system with two binding sites \eqref{Eq_weaklyr} is path connected.
\end{thm}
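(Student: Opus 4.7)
The plan is to invoke Proposition \ref{Lemma_CRNconn}, which reduces the assertion to two tasks for the polynomial $q\colon \mathbb{R}^{n+\ell}_{>0} \to \mathbb{R}$ associated with the network \eqref{Eq_weaklyr}: (a) verify that $q$ satisfies the closure property, and (b) show that $q^{-1}(\mathbb{R}_{<0})$ is path connected. Before this, I would check that the network is conservative and has no relevant boundary steady states, using the procedures from \cite{FeliuPlos,EF_connpaper}, and then construct $q$ explicitly (this is a large polynomial: the text states $1248$ monomials, with $1020$ positive and $228$ negative coefficients).

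For (a) and the first step of (b), I would look for a proper face $F \subseteq \N(q)$ with $\sigma_-(q) \subseteq F$. If such $F$ exists, then by Proposition \ref{Prop_ClosureProper}(ii), $q$ satisfies the closure property, and by Theorem \ref{Thm_NegFace}, it suffices to prove that $q_{|F}^{-1}(\mathbb{R}_{<0})$ is path connected. Concretely, compute the smallest face of $\N(q)$ containing all negative exponent vectors by intersecting the facets of $\N(q)$ that contain $\sigma_-(q)$; this is exactly the reduction performed in the first part of Algorithm \ref{Algo_Conn}. Based on the discussion preceding the theorem, this smallest face has dimension $12$ and is indeed a proper face, so the reduction applies.

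For (b), the strict separating hyperplane criterion from \cite[Theorem 3.6]{DescartesHypPlane} already failed on $q$, so I would continue with the recursive step of Algorithm \ref{Algo_Conn}: enumerate facets $G_1 = \N(q_{|F})_v$ of $\N(q_{|F})$ and test whether $\sigma(q_{|F}) \subseteq G_1 \cup G_2$ for $G_2 = \N(q_{|F})_{-v}$. Whenever such a pair is found, I would search for an edge $\Conv(\beta_1, \beta_2)$ of $\N(q_{|F})$ with $\beta_1 \in G_1 \cap \sigma_-(q_{|F})$ and $\beta_2 \in G_2 \cap \sigma_-(q_{|F})$. If this succeeds, Theorem \ref{Thm_ParallelFacesEdge} reduces the problem to showing that $q_{|G_1}^{-1}(\mathbb{R}_{<0})$ and $q_{|G_2}^{-1}(\mathbb{R}_{<0})$ are path connected. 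For the latter, I would try the simplest criterion, Theorem \ref{Thm::FirstDescartes}(i): solve a linear feasibility problem to test whether $\sigma(q_{|G_i})$ admits a strict separating hyperplane. If both do, then each $q_{|G_i}^{-1}(\mathbb{R}_{<0})$ is contractible, hence path connected, and the chain of reductions completes the proof.

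The main obstacle will be computational rather than conceptual: $\N(q)$ sits in a $16$-dimensional affine space with $1248$ vertices, and the restricted polytope $\N(q_{|F})$ already has $37$ facets. Enumerating facets and evaluating the union/edge conditions at this scale is delicate and essentially requires polyhedral software such as \texttt{Polymake}/\texttt{OSCAR}. A secondary concern is that the algorithm might in principle be inconclusive: it is not a priori clear that among the $37$ facets of $\N(q_{|F})$ one finds a pair of parallel facets whose union contains $\sigma(q_{|F})$ together with a negative edge connecting them, and that the two resulting subpolynomials admit strict separating hyperplanes. The paragraph preceding the theorem asserts that all these conditions are met for the first facet returned by \texttt{Polymake}, so the plan reduces to verifying these explicit polyhedral certificates.
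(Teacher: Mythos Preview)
Your proposal is correct and follows essentially the same approach as the paper: verify the hypotheses of Proposition~\ref{Lemma_CRNconn}, reduce via Theorem~\ref{Thm_NegFace} to the $12$-dimensional face $F$ containing $\sigma_-(q)$ (which also yields the closure property by Proposition~\ref{Prop_ClosureProper}(ii)), then find a pair of parallel facets $G_1,G_2$ of $\N(q_{|F})$ together with a negative edge so that Theorem~\ref{Thm_ParallelFacesEdge} applies, and finally certify strict separating hyperplanes for $q_{|G_1}$ and $q_{|G_2}$ via Theorem~\ref{Thm::FirstDescartes}(i). The paper carries this out with the accompanying \texttt{OSCAR}/\texttt{Polymake} code, exactly as you anticipate.
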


\begin{figure}[t]
\centering
\includegraphics[scale=0.7]{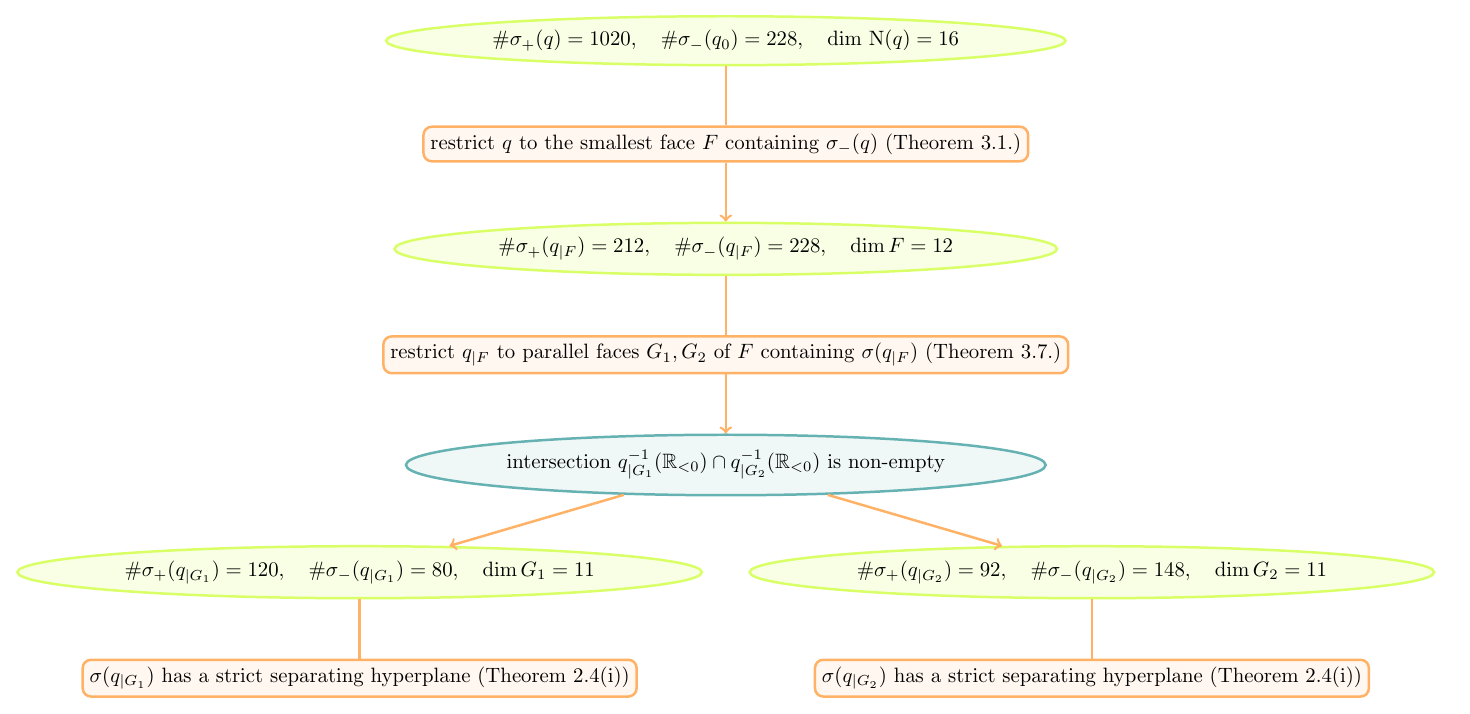}
\caption{{\small Steps taken by Algorithm \ref{Algo_Conn} with input polynomial $q$ from Proposition \ref{Lemma_CRNconn} associated with the weakly irreversible phosphorylation cycle \eqref{Eq_weaklyr}.}}\label{FIG_Gunawardena}
\end{figure}

\medskip

\subsection{Strongly irreversible phosphorylation cycles} A well-studied family of reaction networks is the family of $m$-site phosphorylation cycles, where it is assumed that each phosphorylation step follows the Michaelis-Menten mechanism. This is also referred to as \emph{strong irreversibility} in \cite{MultiSite_Phosph_Gunaw}. For a fixed $m \in \mathbb{N}$, the $m$-site phosphorylation cycle is given by the reactions:
\begin{align*}
\mathrm{S}_{i} +\mathrm{E} \xrightleftharpoons[\kappa_{6i+2}]{\kappa_{6i+1}} \mathrm{ES}_i \xrightarrow{\kappa_{6i+3}} \mathrm{S}_{i+1} + \mathrm{E}, \quad \mathrm{S}_{i+1} +\mathrm{F} \xrightleftharpoons[\kappa_{6i+5}]{\kappa_{6i+4}} \mathrm{FS}_{i+1} \xrightarrow{\kappa_{6i+6}} \mathrm{S}_{i+6} + \mathrm{F}, \quad i =0,\dots,n-1.
\end{align*}

Questions about the existence of multistationarity and the number of steady states are well understood \cite{MultiSite_Phosph_Sontag,G-distributivity,markev,MultiSite_Phosph_Dicken,rendall_feliu}. It is known that the projection of the parameter region of multistationarity onto the space of reaction rate constants (the $\kappa$'s) is path connected for all $m \geq 2$ \cite{Multnsite}. In \cite{EF_connpaper}, it was shown that the full parameter region (the region in the $\kappa$'s and $c$'s) of multistationarity is path connected for $m=2,3$, which led to the conjecture that this might be true for every $m \geq 2$.

Let $q_m$ denote the polynomial from Proposition \ref{Lemma_CRNconn} associated with the $m$-site phosphorylation cycle. Algorithm \ref{Algo_Conn} can be successfully applied to show that $q_{m}^{-1}(\mathbb{R}_{<0})$ is path connected for $m = 4,5,6,7$. The computation can be found in the accompanying \texttt{Jupyter} notebook \cite{gitHub}. Since in these cases, the negative monomials are contained in a proper face of $\N(q_m)$, the polynomial $q_m$ satisfies the closure property by Proposition \ref{Prop_ClosureProper}. We conclude that the parameter region of multistationarity for the $m$-site phosphorylation cycle is path connected for $m =4,5,6,7$.

The steps taken by Algorithm \ref{Algo_Conn} are similar for each $m=4,5,6,7$. This observation led us to a proof strategy for the conjecture that the parameter region of multistationarity for both the weakly and the strongly irreversible phosphorylation network is connected for all $m \geq 2$. This will be the content of the upcoming paper \cite{NsiteConnecGeneral}.

\section*{Acknowledgments}
The author thanks the anonymous reviewer, whose comment led to an improvement of Proposition \ref{Thm_ParallelFaces} and shortening its proof. Additionally, the author is thankful to Elisenda Feliu and Nidhi Kaihnsa for useful discussions and comments on the manuscript. Funded by the European Union under the Grant Agreement no. 101044561, POSALG. Views and opinions expressed are those of the author(s) only and do not necessarily reflect those of the European Union or European Research Council (ERC). Neither the European Union nor ERC can be held responsible for them.

{\small


\begin{thebibliography}{10}

\bibitem{avendano2009}
M.~Avendaño.
\newblock The number of roots of a lacunary bivariate polynomial on a line.
\newblock {\em J. Symb. Comput.}, 44(9):1280--1284, 2009.
\newblock Effective Methods in Algebraic Geometry.

\bibitem{bihan2007}
F.~Bihan.
\newblock Polynomial systems supported on circuits and dessins d'enfants.
\newblock {\em J. Lond. Math. Soc.}, 75(1):116--132, 2007.

\bibitem{Bihan_2016}
F.~Bihan and A.~Dickenstein.
\newblock Descartes’ rule of signs for polynomial systems supported on
  circuits.
\newblock {\em Int. Math. Res. Notices.}, 39(22):6867--6893, 2017.

\bibitem{bihan2020optimal}
F.~Bihan, A.~Dickenstein, and J.~Forsgård.
\newblock Optimal {D}escartes’ rule of signs for systems supported on
  circuits.
\newblock {\em Math. Ann.}, 381:1283--1307, 2021.

\bibitem{bihanelhilany2017}
F.~Bihan and B.~El~Hilany.
\newblock A sharp bound on the number of real intersection points of a sparse
  plane curve with a line.
\newblock {\em J. Symbolic Comput.}, 81:88--96, 2017.

\bibitem{bihanhumberttavenas2022}
F.~Bihan, T.~Humbert, and S.~Tavenas.
\newblock New bounds for the number of connected components of fewnomial
  hypersurfaces.
\newblock {\em arXiv}, 2208.04590, 2024.

\bibitem{bihansottile2007}
F.~Bihan and F.~Sottile.
\newblock New fewnomial upper bounds from {G}ale dual polynomial systems.
\newblock {\em Mosc. Math. J.}, 7(3):387--407, 2007.

\bibitem{bihansottile_betti}
F.~Bihan and F.~Sottile.
\newblock Betti number bounds for fewnomial hypersurfaces via stratified Morse
  theory.
\newblock {\em Proc. Am. Math. Soc.}, 137(9):2825--2833, 2009.

\bibitem{bihansottile2011}
F.~Bihan and F.~Sottile.
\newblock Fewnomial bounds for completely mixed polynomial systems.
\newblock {\em Adv. Geom.}, 11(3):541--556, 2011.

\bibitem{RoboticsCon}
J.~Capco, M.~{S}afey~{E}l~{D}in, and J.~Schicho.
\newblock Robots, computer algebra and eight connected components.
\newblock ISSAC '20: Proceedings of the 45th International Symposium on Symbolic and Algebraic Computation, 62–69, 2020.


\bibitem{FeliuPlos}
C.~Conradi, E.~Feliu, M.~Mincheva, and C.~Wiuf.
\newblock Identifying parameter regions for multistationarity.
\newblock {\em PLoS Comput. Biol.}, 13(10):e1005751, 2017.

\bibitem{Conn_DisgToric}
G. Craciun, A. Deshpande, and J. Jin.
\newblock On the connectivity of the disguised toric locus of a reaction network.
\newblock {\em J. Math. Chem.}, 62(2):386-405, 2024.

\bibitem{ConnToric}
 G. Craciun, J. Jin, and M.-S. Sorea.
 \newblock The structure of the moduli space of toric dynamical systems of a reaction network.
\newblock {\em arXiv}, 2008.11468, 2023.

\bibitem{Curtiss1918}
D.~R. Curtiss.
\newblock Recent extentions of {D}escartes' rule of signs.
\newblock {\em Ann. Math.}, 19(4):251--278, 1918.

\bibitem{OSCAR-book}
W.~Decker, C.~Eder, C.~Fieker, M.~Horn, and M.~Joswig, editors.
\newblock {\em The OSCAR book}.
\newblock 2024.

\bibitem{CRN_Dickenstein}
A.~Dickenstein.
\newblock Algebraic geometry tools in systems biology.
\newblock {\em Not. Am. Math. Soc.}, 67(11):1706--1715, 2020.

\bibitem{duffin1973geometric}
R.~J. Duffin and E.~L. Peterson.
\newblock Geometric programming with signomials.
\newblock {\em J. Optimiz. Theory. App.}, 11(1):3--35, 1973.

\bibitem{EDWARDS197356}
C.~H. Edwards.
\newblock {\em Advanced Calculus of Several Variables}.
\newblock Academic Press, 1973.

\bibitem{MultDualPhos}
E.~Feliu, N.~Kaihnsa, T.~{de Wolff}, and O.~Y\"ur\"uck.
\newblock The kinetic space of multistationarity in dual phosphorylation.
\newblock {\em J. Dyn. Differ. Equ.}, 34:825--852, 2022.

\bibitem{Multnsite}
E.~Feliu, N.~Kaihnsa, T.~de~Wolff, and O.~Y\"{u}r\"{u}k.
\newblock Parameter region for multistationarity in \({\boldsymbol{n-}}\)site
  phosphorylation networks.
\newblock {\em SIAM J. Appl. Dyn. Syst.}, 22(3):2024--2053, 2023.

\bibitem{rendall_feliu}
E.~Feliu, A.~D. Rendall, and C.~Wiuf.
\newblock A proof of unlimited multistability for phosphorylation cycles.
\newblock {\em Nonlinearity}, 33(11):5629--5658, 2020.

\bibitem{DescartesHypPlane}
E.~Feliu and M.~L. Telek.
\newblock On generalizing {D}escartes' rule of signs to hypersurfaces.
\newblock {\em Adv. Math.}, 408(A):108582, 2022.

\bibitem{markev}
D.~Flockerzi, K.~Holstein, and C.~Conradi.
\newblock {N-site Phosphorylation Systems with 2N-1 Steady States}.
\newblock {\em Bull. Math. Biol.}, 76(8):1892--1916, 2014.

\bibitem{NewSubexBounds}
J.~Forsgård, M.~Nisse, and J.~M. Rojas.
\newblock New subexponential fewnomial hypersurface bounds.
\newblock {\em arXiv}, 1710.00481, 2017.

\bibitem{Gauss1828}
C.~F. Gauß.
\newblock Beweis eines algebraischen {L}ehrsatzes.
\newblock {\em J. Reine. Angew. Math.}, 3:1--4, 1828.

\bibitem{polymake}
E.~Gawrilow and M.~Joswig.
\newblock {\em polymake: a Framework for Analyzing Convex Polytope}, pages
  43--73.
\newblock Birkh{\"a}user Basel, Basel, 2000.

\bibitem{MultiSite_Phosph_Dicken}
M.~Giaroli, R.~Rischter, M.~Pérez~Millán, and A.~Dickenstein.
\newblock Parameter regions that give rise to 2[n/2] +1 positive steady states
  in the n-site phosphorylation system.
\newblock {\em Math. Biosci. Eng.}, 16(6):7589--7615, 2019.

\bibitem{Grabiner_DescartesIsSharp}
D.~J. Grabiner.
\newblock Descartes' rule of signs: Another construction.
\newblock {\em Am. Math. Mon.}, 106(9):854--856, 1999.

\bibitem{Lovasz}
M.~Grötschel, L.~Lovász, and A.~Schrijver.
\newblock {\em Geometric Algorithms and Combinatorial Optimization}.
\newblock Springer, 1993.

\bibitem{grunbaum2003convex}
B.~Gr{\"u}nbaum, V.~Kaibel, V.~Klee, and G.~M. Ziegler.
\newblock {\em Convex Polytopes}.
\newblock Graduate Texts in Mathematics. Springer, 2003.

\bibitem{G-distributivity}
J.~Gunawardena.
\newblock {{D}istributivity and processivity in multisite phosphorylation can
  be distinguished through steady-state invariants}.
\newblock {\em Biophys. J.}, 93:3828--3834, 2007.

\bibitem{BasicsOnPolytopes}
M.~Henk, J.~Richter-Gebert, and G.~M. Ziegler.
\newblock Basic properties of convex polytopes.
\newblock In {\em Handbook of Discrete and Computational Geometry, 3rd Ed.},
  2017.

\bibitem{JoswigTheobald_book}
M.~Joswig and T.~Theobald.
\newblock {\em Polyhedral and Algebraic Methods in Computational Geometry}.
\newblock Springer, 2013.


\bibitem{NsiteConnecGeneral}
N.~Kaihnsa and M.~L.~Telek.
 \newblock Connectivity of Parameter Regions of Multistationarity for Multisite Phosphorylation Networks.
\newblock {\em arXiv}, 2403.16556, 2024.


\bibitem{khovanskii1991book}
A.~G. Khovanskii.
\newblock {\em Fewnomials}, volume~88 of {\em Translations of Mathematical
  Monographs}.
\newblock American Mathematical Society, Providence, RI, 1991.

\bibitem{koiranportiertavenas2013}
P.~Koiran, N.~Portier, and S.~Tavenas.
\newblock A wronskian approach to the real $\tau$-conjecture.
\newblock {\em J. Symb. Comput.}, 68:195--214, 2015.
\newblock Effective Methods in Algebraic Geometry.



\bibitem{lirojaswang2003}
T.-Y. Li, J.~M. Rojas, and X.~Wang.
\newblock Counting real connected components of trinomial curve intersections
  and $m$-nomial hypersurfaces.
\newblock {\em Discrete Comput. Geom.}, 30(3):379--414, 2003.

\bibitem{maple}
{Maplesoft, a division of Waterloo Maple Inc}.
\newblock \texttt{Maple}.
\newblock https://www.maplesoft.com, 2021.

\bibitem{ParamGeo}
K.~M. Nam, B.~M. Gyori, S.~V. Amethyst, D.~J. Bates, and J.~Gunawardena.
\newblock Robustness and parameter geography in post-translational modification
  systems.
\newblock {\em Plos. Comput. Biol.}, 16(5):1--50, 2020.

\bibitem{OSCAR}
{OSCAR} -- {O}pen {S}ource {C}omputer {A}lgebra {R}esearch system, version
  1.0.0, 2024.

\bibitem{FuzzyGeo}
I.~Sahidul and A.~M. Wasim.
\newblock {\em Fuzzy Geometric Programming Techniques and Applications}.
\newblock Springer, 2019.

\bibitem{gitHub}
M.~L. Telek.
\newblock {C}heck{C}onnectivity{R}ecursive, version 1.0.1. available online at
  https://doi.org/10.5281/zenodo.11296957, 2024.


\bibitem{telek2023real}
M.~L. Telek.
\newblock Real tropicalization and negative faces of the {N}ewton polytope.
\newblock {\em J. Pure Appl. Algebra}, 228(6):107564,  2024.


\bibitem{EF_connpaper}
M.~L. Telek and E.~Feliu.
\newblock Topological descriptors of the parameter region of multistationarity:
  Deciding upon connectivity.
\newblock {\em Plos. Comput. Biol.}, 19(3):1--38, 2023.

\bibitem{sagemath}
{The Sage Developers}.
\newblock {\em {S}ageMath, the {S}age {M}athematics {S}oftware {S}ystem
  ({V}ersion 9.2)}, 2021.
\newblock {\tt https://www.sagemath.org}.

\bibitem{MultiSite_Phosph_Gunaw}
M.~Thomson and J.~Gunawardena.
\newblock Unlimited multistability in multisite phosphorylation systems.
\newblock {\em Nature}, 406:274--277, 2009.

\bibitem{MultiSite_Phosph_Sontag}
L.~Wang and E.~D. Sontag.
\newblock On the number of steady states in a multiple futile cycle.
\newblock {\em J. Math. Biol.}, 57:29--52, 2008.

\bibitem{Ziegler_book}
G.~M. Ziegler.
\newblock {\em Lectures on Polytopes}.
\newblock Springer, 2007.

\end{thebibliography}

}

\end{document}